\documentclass[11pt,a4paper]{amsart}
\usepackage{graphicx,amscd,amssymb}

\theoremstyle{plain}
\numberwithin{theorem}{section}
\newtheorem{main}{Main~Theorem}{}
\newtheorem{lemma}{Lemma}\numberwithin{lemma}{section}
\newtheorem{proposition}{Proposition}\numberwithin{proposition}{section}
\numberwithin{corollary}{section}
\theoremstyle{definition}
\newtheorem{definition}{Definition}\numberwithin{definition}{section}
\theoremstyle{remark} \theoremstyle{exam} \theoremstyle{ob}
\newtheorem{remark}{Remark}\numberwithin{remark}{section}
\numberwithin{notation}{section}
\newtheorem{example}{Example}\numberwithin{example}{section}
\numberwithin{ob}{section}
\numberwithin{nt}{section}
\numberwithin{equation}{section}

\begin{document}
\title[Spherical   Averages on  the  Nilpotent Groups]
    {Spherical  Averages  
   on  nilpotent Groups}
    \author{ Joonil Kim }

    \address{Department of Mathematics\\
            Yonsei University  \\
            Seoul 121, Korea}
   \email{jikim7030@yonsei.ac.kr}

   \keywords{
      2000 Mathematics Subject Classification : 42B15, 42B30.}


\begin{abstract}
In this paper, we study the spherical maximal operator $\mathcal{M}_{S^{d-1}(A)}$, which is associated with the sphere $S^{d-1}$ embedded in the horizontal plane of some nilpotent group $\mathbb{G}_A$. Here, $\mathbb{G}_A$ is the  group identified with $\mathbb{R}^{d+1}$, and its group law is determined by the $d\times d$ real matrix $A$.
Specifically,  when  $A$ is a skew-symmetric matrix $J$, which is the case of the Heisenberg group,  it was  known that the spherical maximal operator 
$\mathcal{M}_{S^{d-1}(J)}$ is bounded on $L^p(\mathbb{G}_J)$ for $p$ in the range $(\frac{d}{d-1},\infty]$.
In this paper, we extend the results from the skew-symmetric matrix $J$ to a more general class of invertible matrices $A$. We aim to classify the range of $p$ for which the operator $\mathcal{M}_{S^{d-1}(A)}$ is bounded on $L^p(\mathbb{G}_A)$ based on the eigenvalues of $A$.
\end{abstract}
    \maketitle

\setcounter{tocdepth}{1}

\footnote{This research was supported by Basic Science Research Program through the National Research Foundation of Korea(NRF) funded by the Ministry of Education(2019R1H1A2039703)}

\tableofcontents

\section{Introduction} 

Let $M_d(\mathbb{R})$ denote the set of all $d\times d$ matrices with real entries, and let $G_d(\mathbb{R})$ and $SO_d(\mathbb{R})$ represent the subsets of invertible and orthogonal matrices in $M_d(\mathbb{R})$, respectively. Given a matrix $A \in M_d(\mathbb{R})$ and a measure $d\sigma$ along the unit sphere $S^{d-1} \subset \mathbb{R}^d$, we consider the maximal average $\mathcal{M}_{S^{d-1}(A)} f(x,x_{d+1})$ of a function $f \in L_{loc}(\mathbb{R}^{d+1})$ given by $\sup_{t>0}\mathcal{A}_{S^{d-1}(A)}f(x,x_{d+1},t)$ where
\begin{align}
\mathcal{A}_{S^{d-1}(A)}f(x,x_{d+1},t)&:=\int_{y\in S^{d-1}} f(x-ty, x_{d+1}-\langle Ax,ty\rangle) d\sigma(y). \label{11}
\end{align}
This average, for each fixed $t$, can be viewed actually as a  convolution  on the nilpotent group $\mathbb{G}_A  \cong \mathbb{R}^{d+1}$, where the group law on $ \mathbb{G}_A$ is defined as
$$(x,x_{d+1})\cdot (y,y_{d+1})=\left(x+y, x_{d+1}+y_{d+1}+ \langle Ax, y\rangle\right).$$ 
In 1997, Nevo and Thangavelu \cite{NT2}  initiated  the study of $\mathcal{M}_{S^{d-1}(A)}$ for a skew-symmetric matrix $A=J \in G_d(\mathbb{R})$ such that $J^t=-J$ with $d=2n$. In this case, the group $\mathbb{G}_J$ is the $(2n+1)$-dimensional Heisenberg group   $\mathbb{H}^n$, where the average (\ref{11}) is the group convolution of $f$ and the measure $\mu$ supported on the horizontal plane in the Heisenberg group 
\begin{align}\label{1212}
\{(y,0):y\in S^{d-1}\}\  \text{in}\ \mathbb{H}^n\cong \mathbb{R}^{2n+1}\ \text{where $d=2n$}.
\end{align}
In 2004,  Narayanan and Thangavelu in \cite{NT}, M\"{u}ller  and Seeger in \cite{MS} established:
\begin{align}\label{nsc112}
\| \mathcal{M}_{S^{d-1}(J)} \|_{L^p(\mathbb{G})\rightarrow L^p(\mathbb{G})} \leq C \quad \text{if and only if} \quad p > \frac{d}{d-1} 
\end{align}
where $\mathbb{G}=\mathbb{H}^{n}$ and $d=2n$ for $n \geq 2$.  Narayanan and Thangavelu employed the spectral theory of the Heisenberg group, while M\"{u}ller and Seeger recognized the average as a Fourier integral operator with phase having two-sided fold singularities as in \cite{GS2}. 
Subsequently, Anderson, Cladek, Pramanik, and Seeger in \cite{ACPS} treated the cases when the   measure $\mu$ is supported on the hyperplane in $\mathbb{H}^n$, distinct from the horizontal one:
\begin{align}\label{1313}
\{(y,\langle a,y\rangle):y\in S^{2n-1}\}\ \text{in}\ \mathbb{H}^n\ \text{with}\ a\in\mathbb{R}^{2n}.
\end{align}
 In \cite{MS}, M\"{u}ller  and Seeger  obtained  the maximal theorem (\ref{nsc112}) for  the averages  on the M\'etivier group  $\mathbb{G}_{\vec{J}}  \cong \mathbb{R}^{d+m}$ with $d=2n$ and $n\ge 2$, which is defined by $\vec{J}=(J_i)_{i=1}^m$ of $m$ skew symmetric $d\times d$ matrices $J_1,\cdots,J_m$: 
$$\mathcal{A}_{S^{d-1}(\vec{J})}f(x,\underline{x})=\int f\left(x-ty,  \underline{x}-\underline{y}- x^T\vec{J} y \right)d\sigma(y)\ \text{where}\  x^T\vec{J} y =\sum_{i=1}^m \left\langle  x, J_i y \right\rangle {\bf e}_i$$ 
under the non-degeneracy hypothesis that
 $\sum_{i=1}^m \theta_i J_i$  are invertible for all $\theta_i$.
Recently, in \cite{RS}, Ryu and Seeger obtained  (\ref{nsc112}) on  $\mathbb{G}_{\vec{J}} $ without the nondegeneracy assumption and the restriction $d=2n$. Their results cover  the all  step two nilpotent Lie groups $\mathbb{G}$  having natural local coordinates from Lie algebra $\mathfrak{g}$ via some exponential map.

The $L^p(\mathbb{H}^1)$ boundedness of $\mathcal{M}_{S^1(J)}$ for $p>2$  remains an open problem. However, Beltran, Guo, Hickman, and Seeger in \cite{BGHS} established the estimate $\| \mathcal{M}_{S^1(J)}f\|_{L^P(\mathbb{H}^1)} \leq C\|f\|_{L^p(\mathbb{H}^1)}$ for all $p>2$ in the case of radial functions $f \in L^p(\mathbb{H}^1)$ satisfying $f(x,x_3) = g(|x|,x_3)$.  In \cite{BHRT}, S. Bagchi, S. Hait, L. Roncal, and S. Thangavelu studied the lacunary version of $\mathcal{M}_{S^{d-1}(J)}$ and obtained $L^p$ boundedness results. They also investigated the $L^p(\mathbb{H}^n)\rightarrow L^q(\mathbb{H}^n)$ mapping properties of the localized version $\mathcal{M}^{loc}_{S^{d-1}(J)}$. In \cite{RSS}, Roos, Seeger, and Srivastava derived a sharp 
$L^p(\mathbb{H}^n)\rightarrow L^q(\mathbb{H}^n)$ estimate for $\mathcal{M}^{loc}_{S^{d-1}(J)}$ for $d=2n$ and $n\ge 2$.  More recently,  J. Lee and S. Lee  in \cite{LL}  proved again  the    aforementioned radial result of  \cite{BGHS}  in  a simple manner, and also  obtained the $L^p(\mathbb{H}^1)\rightarrow L^q(\mathbb{H}^1)$ improving estimates for the local radial maximal operators.  
For  lacunary maximal functions on   homogeneous groups and its broad applications, see the recent work of  Govindan,  Hickman, and Wright in \cite{GHW}.  

The main objective of this paper is to study the $L^p$ boundedness of $\mathcal{M}_{S^{d-1}(A)}$ for the general cases that
 $A$ are not given by the skew symmetric matrices.  In Section \ref{Sec23}, we set up our averages $\mathcal{A}_{S^{d-1}(A)}f(x,x_{d+1},1)$ as the convolution with the sphere measure on the horizontal plane lifted to the quadratic surfaces $(y, \langle (A+A^T)y/2, y\rangle/2) $ on the group induced by skew symmetric matrix $(A-A^T)/2$.

 \subsection{Results of Spherical Maximal Functions} 
  \begin{definition}
Given a matrix $A\in M_{d }(\mathbb{R})$, we define $\Lambda(A)$ as the set of all eigenvalues of $A$. Specifically, $\Lambda(A) =\{\mu\in \mathbb{C}:p_A(\mu)=0\}$, where $p_A(r)=\det(A-rI)$ is the characteristic polynomial of $A$. The exponent of the factor $(r-\mu)$ in $p_A(r)$ represents the algebraic multiplicity of the eigenvalue $\mu$, denoted as $m_a(\mu)$.
We denote the eigenspace $ \mathcal{N}(A-\mu I) $ corresponding to the eigenvalue $\mu$ as $\mathcal{E}(\mu)$, and its dimension as $m_g(\mu)$, which is called the geometric multiplicity.  
   \end{definition}
 \begin{main}\label{main1}
Let $A\in  M_{d}(\mathbb{R})$ with $d>2$. Suppose   $\Lambda(A)\cap \mathbb{R}=\emptyset.$ 
Then   
 \begin{align}\label{s81}
\left\| \mathcal{M}_{S^{d-1}(A)}\right\|_{L^p(\mathbb{R}^{d+1})\rightarrow L^p( \mathbb{R}^{d+1})}\le C\ \ \text{if and only if  $p>\frac{ d   }{ d- 1 }$.} 
\end{align}
\end{main}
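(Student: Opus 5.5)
The plan has two parts: necessity, via a standard Stein-type counterexample, and sufficiency, via reduction to the Fourier integral operator framework of M\"uller--Seeger, with the fold structure checked directly from the eigenvalue hypothesis.

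\emph{Necessity.} I will test the operator on
\[
f(x,x_{d+1})=|x|^{1-d}\bigl(\log(1/|x|)\bigr)^{-1}\chi(x,x_{d+1}),
\]
with $\chi$ a cutoff near the origin. This $f$ lies in $L^p(\mathbb{R}^{d+1})$ for $p\le d/(d-1)$. For $(x,x_{d+1})$ in a set of positive measure, take $t=|x|$, so that the sphere $x-tS^{d-1}$ passes through the origin. The vertical displacement $\langle Ax,ty\rangle$ is smooth and bounded, so the $x_{d+1}$ variable stays inside the support of $\chi$. The average is then infinite, exactly as in Stein's Euclidean example, and this gives the "only if" direction.

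\emph{Sufficiency, reduction.} As in Section \ref{Sec23}, write $A=J+S$ with $J=(A-A^T)/2$ and $S=(A+A^T)/2$. The averages become convolutions on the group $\mathbb{G}_J$ with measures on the lifted surfaces $(y,\langle Sy,y\rangle/2)$, $y\in S^{d-1}$. By the change of variables $x_{d+1}\mapsto x_{d+1}-\langle Sx,x\rangle/2$, the operator $\mathcal{A}_{S^{d-1}(A)}$ is conjugate to an average over $S^{d-1}$ with phase $\langle Ax,y\rangle$. For $p$ slightly above $d/(d-1)$ I will follow M\"uller--Seeger \cite{MS}:
\begin{itemize}
\item Use Littlewood--Paley pieces $\mathcal{A}^k_t$, frequency localized at $|\xi|\sim 2^k$.
\item Reduce by scaling to $t\in[1,2]$. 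The maximal function over $t>0$ is controlled by the local one plus a square-function argument, because the dilations $(x,x_{d+1})\to(tx,t^2x_{d+1})$ are automorphisms of $\mathbb{G}_A$.
\item Prove an $L^2$ bound of $2^{-k(d-2)/2}$ and an $L^1$ bound of $2^{k}$ for the pieces (with the $t$-derivative costing $2^k$). Interpolating gives summability for $p>d/(d-1)$.
\end{itemize}
Only the $L^2$ estimate uses the group structure. It amounts to showing that the canonical relation of the Fourier integral operator $f\mapsto\mathcal{A}f(\cdot,t)$ (with $t$ as a variable) projects with at most fold singularities. This yields a loss of $2^{k/2}$ relative to the Euclidean decay $2^{-k(d-1)/2}$, still acceptable when $d>2$.

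\emph{Main obstacle.} The key step is verifying the fold condition, and this is where $\Lambda(A)\cap\mathbb{R}=\emptyset$ enters. With dual variables $(\xi,\tau)$, the phase $\tau(x_{d+1}-\langle Ax,ty\rangle)+\xi\cdot(x-ty)$ leads to the condition that $\xi+\tau A^T x$ be normal to the sphere at $y$. The projections degenerate exactly when a matrix of the form $tA-\lambda I$ (or $tA^T-\lambda I$) restricted to $y^\perp$, with $\lambda$ essentially $|\xi+\tau A^Tx|/\tau$, fails to be invertible. If $A$ has no real eigenvalues, then $tA-\lambda I$ is invertible for all real $\lambda$. The only degeneracy left is the one coming from the restriction to the tangent space $y^\perp$, and it has corank one. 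I expect the transversality conditions for a two-sided fold to reduce to $\langle (tA-\lambda)^{-1}y,y\rangle\neq0$ type conditions, or at least to be verifiable on an open dense set with uniform constants, and this computation is the heart of the proof. If the folds turn out not to be uniform, the fallback is to decompose in the angle between $\xi$ and $\tau A^T x$. On the region away from $\tau\approx0$, I would apply the Heisenberg-type oscillatory integral estimates of \cite{MS,RS} on $\mathbb{G}_J$, treating the symmetric part $S$ as a smooth perturbation of the surface.
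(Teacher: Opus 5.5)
Your necessity argument is fine. Stein's example $|x|^{1-d}(\log 1/|x|)^{-1}$ with $t=|x|$ works verbatim because the vertical shift $\langle Ax,ty\rangle$ is bounded, and it needs no real eigenvalue, whereas the argument in Section \ref{sec9} is phrased via one. Your sufficiency skeleton also matches the paper's Proposition \ref{prop44}: dyadic pieces, scaling, and an $L^2$ bound $2^{-k(d-2)/2}$ interpolated against $2^{k}$ near $L^1$.

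The gap is the $L^2$ estimate, which you leave as an expected fold computation; both its target and its mechanism are off. The bound $2^{-k(d-2)/2}$ for $\sup_{1\le t\le 2}|\mathcal{A}^k_tf|$ equals $2^{k/2}\cdot 2^{-k(d-1)/2}$. That factor $2^{k/2}$ is already spent on the Sobolev embedding in $t$, since $\sup_t|F|^2\le 2\int|F||\partial_tF|\,dt$ and $\partial_t$ costs $2^k$. So what you must prove is that $f\mapsto\chi(t)\mathcal{A}^kf(x,x_{d+1},t)$ maps $L^2(\mathbb{R}^{d+1})\to L^2(\mathbb{R}^{d+2})$ with norm $\lesssim 2^{-k(d-1)/2}$, up to $2^{\epsilon k}$. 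That is, the $(x,t)$ operator must have \emph{no} loss at all. If its canonical relation really had folds, you would lose some $2^{k\beta}$ with $\beta>0$ ($1/6$ or $1/4$ for folds). Your interpolation would then give only $p>\frac{d-2\beta}{d-1-2\beta}$, not the sharp range. A fold-induced loss of $2^{k/2}$, as you describe it, is inconsistent with either accounting.

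The missing idea is that, under $\Lambda(A)\cap\mathbb{R}=\emptyset$, keeping $t$ as a variable removes the degeneracy entirely. The corank-one degeneracy you find ``from the restriction to $y^\perp$'' belongs to the fixed-time operator, and the $t$-derivative supplies exactly the missing normal direction. After Plancherel in $x_{d+1}$ (frequency $\lambda$) and the Bessel asymptotics, the main case $\lambda\approx 2^j$ has phase $\langle Ax,\xi\rangle+t|\xi+x|$. Its $(d+1)\times d$ mixed Hessian is
\[
\begin{pmatrix} A+\frac{t}{|u|}I-\frac{t}{|u|^3}uu^T\\ u^T/|u|\end{pmatrix}\sim\begin{pmatrix} A+\frac{t}{|u|}I\\ u^T/|u|\end{pmatrix},\qquad u=x+\xi .
\]
This has rank $d$ because $A+sI$ is invertible for every real $s$. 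Lemma \ref{th51h} then gives $\|\mathcal{T}_j\|=O(1)$ (Proposition \ref{th44}). The regimes $\lambda\gg 2^j$ and $\lambda\ll 2^j$ are nondegenerate after rescaling (Proposition \ref{pr02}).

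No fold analysis is needed, and for non-skew $A$ your proposed one may simply fail. For fixed $t$ the determinant does vanish somewhere (Lemma \ref{lem515}), and the type of that singularity is not established. Your fallback of treating $S$ as a perturbation on $\mathbb{G}_J$ is also not a proof. $J=(A-A^T)/2$ can be singular even when $A$ has no real eigenvalues (already for $d=4$). Moreover, the symmetric part is precisely what can destroy the estimate, e.g.\ $A=I$.
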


We  use the notation $L^p(\mathbb{R}^{d+1})$ instead of $L^p(\mathbb{G}_A)$ without confusion.  
\begin{definition}
The nilpotent index of an eigenvalue $\mu\in \Lambda(A)$, denoted as $n(\mu)$, is defined as the smallest integer $m$ satisfying $$\mathcal{N}((A-\mu I)^{m})= \mathcal{N}((A-\mu I)^n)\ \text{for all $n\ge m$.}$$   
\end{definition}
In the case where $ \Lambda(A)\cap \mathbb{R}\ne \emptyset$, we will assume that for each $\mu\in\Lambda(A)\cap \mathbb{R}$, the nilpotent index $n(\mu)$ is equal to 1, which is equivalent to $m_a(\mu)=m_g(\mu)$. We denote the maximal multiplicity of such eigenvalues as $m_A$:
\begin{align}\label{m11}
m_A:=  
\max \{m_a(\mu):\mu\in \Lambda(A)\cap \mathbb{R}\}.
\end{align}  
Then we   state the $L^p$ results for some cases  of  $\Lambda(A)\cap \mathbb{R}\ne \emptyset$   as it follows.
\begin{main}\label{main2}
Let $A\in G_{d }(\mathbb{R})$ with $d>2$. Suppose that $\Lambda(A)\cap \mathbb{R}\ne\emptyset$ and the nilpotent indices $n(\mu)=1$ for all $\mu\in \Lambda(A)\cap \mathbb{R}$.
If $m_A <d-1$ in (\ref{m11}), then  
\begin{align*}
\left\| \mathcal{M}_{S^{d-1}(A)}\right\|_{L^p(\mathbb{R}^{d+1})\rightarrow L^p( \mathbb{R}^{d+1})}\le C\ \text{ if and only if $p>\frac{ d' }{ d'-1 }$  for  $d'= d- (m_A-1)$.}
\end{align*}
 If $m_A=d$,  then  $\| \mathcal{M}_{S^{d-1}(A)}\|_{L^p(\mathbb{R}^{d+1})\rightarrow L^p( \mathbb{R}^{d+1})}  =\infty$  for all $p<\infty$. 
\end{main}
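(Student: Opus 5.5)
We prove Main Theorem \ref{main2} by reducing, through linear changes of variables that preserve the structure of the average (\ref{11}), to a normal form. In that normal form the real eigenspaces of $A$ become flat directions, along which the average behaves like a Euclidean spherical average in lower dimension.

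\emph{Step 1 (normal form).} Conjugating $A$ by an orthogonal matrix $U$ and substituting $x\mapsto Ux$, $y\mapsto Uy$ preserves $d\sigma$ and all $L^p$ norms. This changes $A$ to $U^TAU$. Since $n(\mu)=1$ for every real eigenvalue $\mu$, each $\mathcal{E}(\mu)$ has dimension $m_a(\mu)$. The real Schur form then lets us write $A$ in block upper-triangular form, whose diagonal blocks are $\mu I_{m_a(\mu)}$ for the real eigenvalues together with $2\times2$ rotation--dilation blocks for the complex ones. The extra coordinate can also absorb the symmetric part, as in Section \ref{Sec23}: we write $\langle Ax,y\rangle=\langle \tfrac{A-A^T}{2}x,y\rangle+\tfrac12\langle Sx,y\rangle$ with $S=A+A^T$. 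A shear $x_{d+1}\mapsto x_{d+1}+\tfrac12\langle Sx,x\rangle/2$ is measure preserving, and it turns the average into one over the lifted quadratic surface $(y,\langle Sy,y\rangle/2)$ on $\mathbb{G}_{(A-A^T)/2}$.

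\emph{Step 2 (decoupling the real eigenspace).} Let $\mu_0$ be a real eigenvalue with $m_a(\mu_0)=m_A$, and let $V=\mathcal{E}(\mu_0)$, so $\dim V=m_A$. Along $V$ the phase $\langle Ax,y\rangle$ degenerates: the cross terms either vanish or are pure quadratic terms $\mu_0|y_V|^2$, which on the sphere reduce to a function of $|y_{V^\perp}|$. We therefore integrate out the $V$-directions using polar coordinates $y=(\sqrt{1-s^2}\,\omega,s\theta)$ with $\theta\in S^{m_A-1}$. The sphere in $V$ contributes only through its radius, which effectively reduces the dimension from $d$ to $d-m_A+1=d'$. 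Concretely, we dominate $\mathcal{M}_{S^{d-1}(A)}$ by an average over $\theta$ of maximal operators attached to $(d'-1)$-spheres in $V^\perp\oplus\mathbb{R}\theta$. On the complement, $A$ restricted to $V^\perp$ has no eigenvalue $\mu_0$, and the remaining real eigenvalues have multiplicity at most $m_A$. For this reduced operator we apply the Main Theorem \ref{main1} mechanism, i.e.\ the Fourier integral operator analysis with two-sided fold singularities as in \cite{MS}, \cite{RS}, in dimension $d'$. This gives boundedness for $p>d'/(d'-1)$, since $d'\ge3$ when $m_A<d-1$. Minkowski's inequality in $\theta$ and Fubini along $V$ then complete the bound.

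\emph{Step 3 (necessity and the case $m_A=d$).} For the sharpness of $p>d'/(d'-1)$ we take $f(x,x_{d+1})=|x'|^{-(d'-1)}|\log|x'||^{-1}\chi$, where $x'$ denotes the coordinates of a $d'$-dimensional slice transversal to $V$, localized near the origin. The Stein-type counterexample then shows that $\mathcal{M}f=\infty$ on a set of positive measure: for $x$ in a fixed region, some $t$ makes the sphere pass tangentially through the singular set, and because $\mu_0$ is real the height shift $\langle Ax,ty\rangle$ can be matched along $V$. If $m_A=d$, then $A=\mu_0 I$ and $\langle Ax,ty\rangle=\mu_0 t\langle x,y\rangle$. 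After the shear $x_{d+1}\mapsto x_{d+1}-\mu_0|x|^2/2$ the average becomes an average over the paraboloid-type lift of spheres, where $x_{d+1}$ is constant in $y$ on the sphere. The operator then contains the Euclidean maximal operator over $d$-spheres in a hyperplane of $\mathbb{R}^{d+1}$, i.e.\ a codimension-two average. Taking $f=\chi_{|x|<1}\cdot\delta$-approximations in $x_{d+1}$, namely $f_\epsilon=\epsilon^{-1}\chi_{|x_{d+1}|<\epsilon}\chi_{|x|<1}$, we get $\|\mathcal{M}f_\epsilon\|_p\gtrsim1$ while $\|f_\epsilon\|_p\sim\epsilon^{1/p-1}\cdot\epsilon\to0$ relative to it. This gives unboundedness for every $p<\infty$.

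The main obstacle is Step 2. Decoupling $V$ rigorously requires the off-diagonal block of $A$ between $V$ and $V^\perp$ to be harmless, and showing this is where $n(\mu)=1$ is needed. We will try to remove that block with a further shear in $x_{d+1}$ combined with a unipotent change of variables. Because such a change is not orthogonal, we must then check that the resulting uniform fold estimates for the reduced $d'$-dimensional family hold uniformly in $\theta$.
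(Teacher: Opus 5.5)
\textbf{Verdict: the sufficiency direction (Step 2) has a genuine gap; Step 3 is fine.} Your Step 3 follows the paper's lower-bound construction in Section \ref{sec9}. That construction leaves the $\mathcal{E}(\mu_0)$-directions free, concentrates at scale $\delta$ in the $d'$ transversal variables including $x_{d+1}$, and chooses $\frac{\mu_0}{2}t^2=x_{d+1}$. It then uses a log-modified Stein example at the endpoint, and the flat-height codimension-two average when $m_A=d$. Your normalizations for $f_\epsilon$ are garbled, but the ratio is $\epsilon^{-1/p}\to\infty$, so that part stands.

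\textbf{The obstacle you name is not the real one.} Set $W_\theta=\mathcal{E}(\mu_0)^{\perp}\oplus\mathbb{R}\theta$. For $x\in W_\theta^{\perp}\subset\mathcal{E}(\mu_0)$ and $y\in W_\theta$ you have $\langle Ax,y\rangle=\mu_0\langle x,y\rangle=0$. So the slicing decouples exactly, with no off-diagonal block to remove. Each $\mathcal{M}_\theta$ is, fiberwise, $\mathcal{M}_{S^{d'-1}(A_\theta)}$ on $\mathbb{R}^{d'+1}$ with $A_\theta=\mathcal{P}_{W_\theta}A|_{W_\theta}$.

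\textbf{The reduced problem is not covered by what you cite.} Since $\theta$ is itself an eigenvector, $A_\theta$ has the real eigenvalue $\mu_0$; it is simple precisely because $n(\mu_0)=1$. Hence $A_\theta+\frac{t}{|u|}I$ degenerates at $t/|u|=-\mu_0$, and the Main Theorem \ref{main1} argument does not apply. Nor do the fold results of M\"uller--Seeger and Ryu--Seeger, which concern horizontal spheres: $\langle A_\theta\theta,\theta\rangle=\mu_0\neq0$ means $A_\theta$ has nonzero symmetric part, so the lifted surface is not horizontal.

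What you actually need is the $m_A=1$ case of the theorem in dimension $d'$, and that is the analytic core of the paper. There one:
\begin{itemize}
\item localizes $t/|\xi+x|$ near $-\mu$ and runs $TT^*$;
\item integrates by parts in $x$ along a fixed $v\in\mathcal{E}^{\perp}(\mu)$ with $|\langle v,(A-\mu I)(\xi-\eta)\rangle|\approx|\mathcal{P}_{\mathcal{E}^{\perp}(\mu)}(\xi-\eta)|$, which is where (\ref{5252}) is used;
\item notes that $v\perp\mathcal{E}(\mu)$ kills the term $(\mu+t/|\xi+x|)(\xi-\eta)$;
\item uses $\partial_t\Phi=|\xi+x|-|\eta+x|$ and bounds the region $|\mathcal{P}_{\mathcal{E}^{\perp}(\mu)}(\xi-\eta)|\lesssim 2^{-j}$ by a sublevel-set volume count.
\end{itemize}
This gives (\ref{cxx1}), which is fed into the $L^2$ reduction and interpolated against the trivial $L^{1+\eta}$ bound $2^j$. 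Your proposal defers exactly this estimate to results that do not contain it.

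\textbf{The slicing is too lossy in general.} Even granting the $m_A=1$ case, the majorization $\mathcal{M}_{S^{d-1}(A)}f\le\int\mathcal{M}_\theta|f|\,d\theta$ plus Minkowski fails whenever another real eigenvalue has multiplicity at least $2$, because $A_\theta$ inherits it. Take $A=\mathrm{diag}(1,1,2,2)$, so $d=4$, $m_A=2$, $d'=3$, and the claimed range is $p>3/2$. Slicing along $\mathcal{E}(1)$ gives $A_\theta=\mathrm{diag}(1,2,2)$ on $\mathbb{R}^3$. By (\ref{nv32}) with $d=3$, $m=2$, its maximal operator is unbounded for every $p\le2$. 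So the uniform-in-$\theta$ bounds you want to feed into Minkowski are false on $(3/2,2]$.

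The paper avoids this because its cutoff $\psi((\mu+t/|\xi+x|)/\rho)$ localizes in phase space and isolates one real eigenvalue at a time. The loss is then governed by $\max_\mu m_a(\mu)$ rather than accumulating across eigenvalues. A physical-space slicing of the sphere cannot perform that localization.
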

\begin{remark}
If $\Lambda(A)\cap \mathbb{R}= \emptyset$, then take $m_A=1$ and 
Main Theorem 1  does not address the case $d=2$ concerning the circular maximal function  in   $L^p(\mathbb{H}^1)$ for   $p>2$.  Similarly,  Main Theorem 2 does not cover the case
$m_A =d-1$, which   corresponds to the  range $ p>2 $.
 \end{remark}
 \begin{remark}\label{ob11}
One can switch  $A$   with $A^T$ in the main theorems.
\end{remark}
 
\subsection{Sobolev Regularities on  $L^2(dxdt)$ and $L^2(dx)$} 
In the 1980s, Phong and Stein  \cite{PS} developed a powerful theory of singular and generalized Radon transforms, which relied on the observation of the smoothing effect arising from the rotational curvature of rotating planes  in the integral $\int_{\mathbb{R}^d} f(x+y, x_{d+1}+\langle Ax, y\rangle) dy$. The exact amount of smoothness is $d/2$ Sobolev derivatives in $L^2(\mathbb{R}^{d+1})$, gained from   the variable planes $(x,x_{d+1})\cdot (\mathbb{R}^d\times\{0\})$   in  the nilpotent group $\mathbb{G}_A$.   
As a co-dimension 2 analogue,   the average $\mathcal{A}_{S^{d-1}(A)}f(x,x_{d+1},t)$ is  taken over the variable spheres $(x,x_{d+1})\cdot (S^{d-1}_t\times\{0\})$ in the nilpotent group $\mathbb{G}_A \cong \mathbb{R}^{d+1}$.  If $A$ is skew-symmetric matrix $J$,    then the average $\mathcal{A}_{S^{d-1}(A)}f(\cdot,t)$ preserves  the amount of $(d-1)/2-1/6$ derivatives in $L^2(dxdx_{d+1})$ for a fixed $t$, wherease it preserves the $(d-1)/2$   derivatives in $L^2(dxdx_{d+1}dt)$.  This is a key feaure for deriving the previous results  in  the Heisenberg group,   which is not possible
  in  the Euclidean $L^2$ space.  
In this paper, we explicitly assert that $\mathcal{A}_{S^{d-1}(A)}f(\cdot,t)$ has at least 1/6 more $L^2$-smoothness gain in the nilpotent group $\mathbb{G}_A$ from the $dt$ integral if either (i) $\Lambda(A)\cap \mathbb{R}=\emptyset$ or (ii) $\Lambda(A)\cap \mathbb{R}\ne \emptyset$ with $n(\mu)=1$ for all $\mu\in \Lambda(A)\cap \mathbb{R}$.

 \begin{main}\label{main4}[Sobolev Regularity]
 Let $ A\in G_{d }(\mathbb{R})$ for $d\ge 2$ and $t\ne 0$.  
Suppose  that  (i) $\Lambda(A)\cap \mathbb{R}=\emptyset$ or (ii)   $\Lambda(A)\cap \mathbb{R}\ne \emptyset$ and $n(\mu)=1$ for every $\mu\in \Lambda(A)\cap \mathbb{R}$. Then  we have
\begin{align}\label{0045}
& \|\mathcal{A}_{S^{d-1}(A)}\|_{L^2(\mathbb{R}^{d+1})\rightarrow L^2_{\alpha}(\mathbb{R}^{d+1}\times [1,2])  }\lesssim 1\ \text{if $\alpha< \frac{d-1}{2} -\frac{m_A-1}{2}$,} \\
& \|\mathcal{A}^t_{S^{d-1}(A)}\|_{L^2(\mathbb{R}^{d+1})\rightarrow L^2_{\alpha}(\mathbb{R}^{d+1})  } =  \infty\ \text{ if  $\alpha> \frac{d-1}{2}-\frac{m_A-1}{2}-\frac{1}{6}$,}\label{a245}
\end{align}
where  in case of  $A^T=-A$ it holds that for fixed $t\ne 0$,
  $$\|\mathcal{A}^t_{S^{d-1}(A)}\|_{L^2(\mathbb{R}^{d+1})\rightarrow L^2_{\alpha}(\mathbb{R}^{d+1})  } \lesssim 1\ \text{ if $\alpha< \frac{d-1}{2}-\frac{1}{6}$.}$$
\end{main}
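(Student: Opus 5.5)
We work on the Fourier transform side in all $d+1$ variables. Take the partial Fourier transform in $x_{d+1}$, with dual variable $\lambda$. Then $\mathcal{A}_{S^{d-1}(A)}$ becomes, for each $\lambda$, the operator
$$f_\lambda \mapsto \int f_\lambda(x-ty)\, e^{-i\lambda t\langle Ax,y\rangle}\, d\sigma(y).$$
Taking the Fourier transform in $x$ as well, this is an oscillatory integral operator with kernel
$$K_\lambda(\xi,\eta)=\int\!\!\int e^{i(\langle x,\xi-\eta\rangle - t\langle y,\eta\rangle-\lambda t\langle Ax,y\rangle)}\,d\sigma(y)\,dx,$$
with phase $\Phi(x,y)=\langle x,\zeta\rangle+\lambda t\langle Ax,y\rangle$ in $(x,y)\in\mathbb{R}^d\times S^{d-1}$. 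The plan is to reduce the regularity question to the size of the "mixed Hessian" restricted to the sphere. Concretely, the canonical relation of the Fourier integral operator $f\mapsto \mathcal{A}f(\cdot,t)$ projects to $T^*\mathbb{R}^{d+1}$ with singularities governed by the matrix
$$\begin{pmatrix} \lambda t A|_{T_yS^{d-1}} & \ast\\ \ast & \ast\end{pmatrix}.$$
The loss of regularity compared with the maximal gain $(d-1)/2$ comes from the drop in rank of this matrix.

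\textbf{Step 1 (conjugation).} First decompose $A=S+J$ with $S=(A+A^T)/2$ and $J=(A-A^T)/2$, as announced for Section \ref{Sec23}. The average becomes convolution on $\mathbb{G}_J$ with the lifted surface $(y,\langle Sy,y\rangle/2)$. The rank computation for the projections $\pi_L,\pi_R$ of the canonical relation then reduces to the following. On the conormal variable $(\xi,\lambda)$ with $\lambda\neq 0$, the defining condition for the normal direction on the sphere is $\xi-\lambda t A^T x\parallel y$. The Jacobian of the projection is essentially
$$\det\big(P_{y^\perp}(\lambda tA - \tau I)P_{y^\perp}\big),$$
where $\tau$ is the Lagrange multiplier (the radial component). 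Hence the singular set corresponds to real values of $\tau/(\lambda t)$ that are eigenvalues of the compression of $A$ to $y^\perp$. Conditions (i)/(ii) say that real eigenvalues are semisimple. This yields that the corank of the projection is at most $m_A-1$ generically, that it is exactly $m_A-1$ when the compression hits a real eigenvalue of multiplicity $m_A$ (the eigenspace $\mathcal{E}(\mu)$ meets $y^\perp$ in dimension $\geq m_A-1$), and that the degeneracy beyond this is at worst a fold.

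\textbf{Step 2 (upper bound \eqref{0045}).} Decompose dyadically $|(\xi,\lambda)|\sim 2^k$. Then integrate in $t\in[1,2]$ as well, treating $\mathcal{A}$ as an FIO from $\mathbb{R}^{d+1}$ to $\mathbb{R}^{d+2}$. The extra variable $t$ enters the phase through $t(\langle y,\eta\rangle+\lambda\langle Ax,y\rangle)$. The $\partial_t$ derivative of the phase supplies the missing nondegenerate direction that resolves the fold: the $t$-derivative is nonzero exactly where the fold occurs, as in Müller–Seeger. So the only remaining loss comes from the $m_A-1$ flat directions. A $TT^*$ argument with stationary phase in the $d-1-(m_A-1)$ nondegenerate directions gives decay $2^{-k((d-1)-(m_A-1))/2}$. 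This is the bound for $\alpha<\frac{d-1}{2}-\frac{m_A-1}{2}$, and the $\epsilon$-loss absorbs the logarithms from summing over angular pieces near the eigen-directions. For (i) we have $m_A=1$ by convention, and the real-eigenvalue analysis is vacuous.

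\textbf{Step 3 (sharpness \eqref{a245}).} Construct a Knapp-type example. Choose $y_0\perp$ a real eigenvector $v$ of multiplicity $m_A$ with $y_0^\perp\supset$ an $(m_A-1)$-dimensional slice of $\mathcal{E}(\mu)$; if $\Lambda(A)\cap\mathbb{R}=\emptyset$, take a generic fold point. Take $\hat f$ concentrated where $\tau=\mu\lambda t$. Along the $(m_A-1)$ flat directions the phase is stationary to infinite order. Along the fold direction it is cubic, giving decay only $2^{-k/3}$ instead of $2^{-k/2}$, i.e. a loss of $1/6$. Testing $\mathcal{A}^t$ on such wave packets at frequency $2^k$ gives $\|\mathcal{A}^tf\|_{L^2_\alpha}/\|f\|_2\gtrsim 2^{k(\alpha-\frac{d-1}{2}+\frac{m_A-1}{2}+\frac16)}\to\infty$. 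For the skew-symmetric case the fixed-$t$ positive result follows from the two-sided fold structure via Melrose–Taylor/Greenleaf–Seeger $L^2$ bounds with $1/6$ loss.

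\textbf{Main obstacle.} The hard step is Step 1: computing the exact corank and the type of singularity of the projections for general $A$. The compression-to-$y^\perp$ eigenvalue problem must be shown to produce only $(m_A-1)$-dimensional kernels plus fold-type behavior, using semisimplicity of real eigenvalues. The first approach I would try is a block decomposition of $A$ into its real Jordan form, using Remark \ref{ob11} to pass to $A^T$ where convenient.
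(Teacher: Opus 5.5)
The upper bound hinges on your Step 1 claim that semisimplicity forces corank at most $m_A-1$, with at worst a fold beyond that. This claim is neither proved nor true, since semisimplicity of the real eigenvalues of $A$ gives no control over the compressions $P_{y^\perp}AP_{y^\perp}$.

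\textbf{A counterexample to the corank claim.} Every symmetric invertible $A$ satisfies (ii). Take $A=\mathrm{diag}(a_1,a_2,a_3)$ with nonzero $a_1<a_2<a_3$, so $m_A=1$. There is a plane $\Pi\ni e_2$ on which $\langle Ay,y\rangle=a_2|y|^2$. At $y\perp\Pi$ the fixed-$t$ singularity has corank two (an umbilic), and the $t$-row can restore only one rank.

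\textbf{The general fact.} In the paper's notation, $\begin{pmatrix}A+\frac{t}{|u|}I\\ u^T/|u|\end{pmatrix}$ has corank $m_g(\mu)$, not $m_g(\mu)-1$, at $t/|u|=-\mu$ whenever $u\in\mathcal{E}^\perp(\mu)$. This is because $u^T$ then lies in the row space of $A-\mu I$. A corank count plus stationary phase would therefore lose $m_A/2$.

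\textbf{What the paper actually does.} The sharp $2^{j(m_A-1)/2}$ of (\ref{51}) comes from the Schur test on the kernel of $\mathcal{T}_{j,\mu}^*\mathcal{T}_{j,\mu}$ in Section 4.
\begin{enumerate}
\item Integration by parts in $t$ confines the kernel to $||\xi+x|-|\eta+x||\lesssim 2^{(\epsilon-1)j}$.
\item Where $|\mathcal{P}_{\mathcal{E}^\perp(\mu)}(\xi-\eta)|\lesssim 2^{(2\epsilon-1)j}$, one uses only the resulting sublevel bound $|K|\lesssim 2^{jd}/(2^j|\xi-\eta|)$, integrated over $\mathcal{E}(\mu)$. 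This is exactly where $2^{j(m_g(\mu)-1)}$ arises.
\item Elsewhere one integrates by parts in $x$ along a fixed $v\in\mathcal{E}^\perp(\mu)$. Here $n(\mu)=1$ enters through $\mathbb{R}^d=\mathcal{C}(A-\mu I)\oplus\mathcal{E}(\mu)$. This makes $|\langle v,(A-\mu I)(\xi-\eta)\rangle|\approx|\mathcal{P}_{\mathcal{E}^\perp(\mu)}(\xi-\eta)|$, which dominates $(\mu+\frac{t}{|\xi+x|})\langle v,\xi-\eta\rangle$ uniformly on the $\rho$-neighbourhood.
\end{enumerate}
Away from these neighbourhoods $A+sI$ is invertible and H\"ormander's bound applies; $\lambda\not\approx 2^j$ is handled by Proposition \ref{pr02}. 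Your $TT^*$ sketch would need exactly this structure, and your corank picture does not supply it.

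Your Step 3 picture, with $m_A-1$ infinitely flat directions plus one cubic fold direction, also fails in general.

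\textbf{The case $m_A=d$.} Here no tangent direction of $S^{d-1}$ is left for the fold. For $A=\mu I$ your bound reads $2^{k(\alpha+1/6)}$ and blows up at $\alpha=0$. Yet $\mathcal{A}^t$ is an $L^2$ contraction, because each map $(x,x_{d+1})\mapsto(x-ty,\,x_{d+1}-t\langle Ax,y\rangle)$ preserves Lebesgue measure. So (\ref{a245}) is actually false as stated for $m_A=d$. The paper's Lemma \ref{lem44} likewise needs a unit vector in $\mathcal{E}^\perp(\mu)$, which does not exist when $m_A=d$.

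\textbf{The case $m_A<d$.} Flatness and the fold still need not coexist. Take $d=3$ and $A=\mathrm{diag}(\mu,\mu,\nu)$. The computation behind (\ref{03}) with $A_{\rm skew}=0$ conjugates $\mathcal{A}^t$, by shears in $x_4$, to Euclidean convolution with the measure on $\{(ty,\frac{t^2}{2}(\mu+(\nu-\mu)y_3^2))\}$. Its Fourier transform decays exactly like $|\zeta|^{-1/2}$, worst at $\xi=0$. The equatorial circle is genuinely flat, but there is no fold, so locally the loss is $\frac12$, not $\frac12+\frac16$.

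\textbf{The paper's own Knapp example.} Consistently with this, the paper's Knapp example, once normalised by $\|f\|_2\|g\|_2=|{\rm box}|=\lambda^{-\frac{d-m_A}{2}-\frac{m_A}{3}}$, yields blow-up only for $\alpha>\frac{d-1}{2}-\frac{m_A}{6}$. That matches the stated threshold only when $m_A=1$.

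\textbf{The skew-symmetric fixed-$t$ bound.} Citing the M\"uller--Seeger two-sided fold estimate is a legitimate alternative to the paper's route. The paper instead combines the determinant bound (\ref{449}) with the minimum of two H\"ormander-type estimates.
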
 
 
\begin{remark}
The exact ranges of $\alpha$ in (\ref{a245}), where $A^T\ne -A$ are currently unknown, and our argument does not include the endpoints of the $\alpha$ ranges in (\ref{0045}) and (\ref{a245}).
For the case $p\ne 2$, we refer Pramanik and Seeger \cite{PS} and Bentsen \cite{B} for the $L^p\rightarrow L^p_{\alpha}$ regularity of     average operators $\mathcal{A}^t$ for fixed $t$   associated with   curves   in the Heisenberg group $\mathbb{H}^1$, in the general context of the fold singularities. These results are preceded by  the    model curves $\{(s,s^2, c s^3)\}$ with $c\ne 1/6$ in $\mathbb{H}^1$ studied by Secco \cite{S}. However,  the boundedness of $\|\mathcal{A}_{S^{1}(A)}\|_{L^p(\mathbb{H}^{1})\rightarrow L^p_{\alpha}(\mathbb{H}^{1}\times [1,2])  }$ is not known for any $p>2$ and $\alpha>1/2$. \end{remark}
\begin{remark}
The initial version of this paper is the  preprint \cite{K3}. It includes
the  result of the $L^2$ boundedness of the   maximal operator  
 $\mathcal{M}_{S^{d-1}_{\delta}(A)}$ 
associated with the annuli
$S^{d-1}_\delta=S^{d-1}+O(\delta)$. 
Suppose that  (i) $\Lambda(A)\cap \mathbb{R}=\emptyset$ or (ii)    $n(\mu)=1$ for all $\mu\in \Lambda(A)\cap \mathbb{R}$.  Then,  from Main Theorems 1-3, we can state 
\begin{align}\label{004}
\|\mathcal{M}_{S^{d-1}_{\delta}(A)}\|_{L^2(\mathbb{R}^{d+1})\rightarrow L^2(\mathbb{R}^{d+1})}\approx_{\epsilon}    \delta ^{(d-2)/2 -(m_A-1)/2}
\end{align}
where the exponent $\frac{d-2}{2}-\frac{m_A-1}{2}$ in $L^{2,\infty}(dxdx_{d+1} dt)$ in (\ref{004})  matches with
the endpoint $\alpha_0=\frac{d-1}{2}-\frac{m_A-1}{2}$ in $L^2_{\alpha_0}(dxdx_{d+1}dt)$ in (\ref{0045}).
If $d=2$, we can include the case $n(\mu)=2$ and
classify
 $\|\mathcal{M}_{S^{1}_{\delta}(A)}\|_{L^2(\mathbb{R}^{3})\rightarrow L^2(\mathbb{R}^{3})}\approx
\delta^{-1/2} \delta^{\alpha_0} $ 
where $\alpha_0=\min\{\frac{1}{2}-\frac{m_a(\mu)-1}{2}+\frac{n(\mu)-1}{3}:\mu\in \Lambda(A)\cap\mathbb{R}\} $. Moreover, $\alpha_0$  is the endpoint of (\ref{0045}) for $d=2$. However, a comprehensive classification, including all $n(\mu)\ge 2$, remains unresolved in higher dimensions ($d \geq 3$).  
\end{remark}

  \section{Step Two Nilpotent Group}
To each $A=(a_{ij})\in M_{d}(\mathbb{R}) $, we   assign a  Lie group $\mathbb{G}_A$  identified with $\mathbb{R}^{d+1}$ endowed with the group multiplication:
$$(x,x_{d+1})\cdot (y,y_{d+1})=\left(x+y,x_{d+1}+y_{d+1}+\langle Ax,y\rangle\right).$$
We are treating  with the spherical means 
 $\mathcal{A}_{S^{d-1}(A)}(f)(x,x_{d+1})$ 
as an actual   convolution  in   $\mathbb{G}_A$  of $f$ and the measure $\mu$ with   $\mu(f)=\int_{y\in S^{d-1}} f(y,0)d\sigma(y) $.
We  take the corresponding Lie algebra $\mathfrak{g}_A$ as the tangent space at the identity ${\bf 0}$ of $\mathbb{G}_A$ consisting of   left invariant vector fields $X^{v} $ with respect to left translate in $ \mathbb{G}_A$:
\begin{align}\label{hh}
X^{v}(f)(x,x_{d+1})= \frac{d}{dt} \left( f((x,x_{d+1})\cdot (\gamma(t))\right)|_{t=0} 
\end{align}
 where $t\rightarrow\gamma(t)$ is a curve in $\mathbb{G}_A$ with $\gamma(0)=0$ and $\gamma'(0)=v\in \mathbb{G}_A$.   Then   $\mathfrak{g}_A$  generated by the basis of    $\{X_j  \}_{j=1}^{d+1}$ where $X_j$ is $X^{v} $ with $v={\bf e}_j$: 
$$X_j=\partial/\partial x_j+(a_{j1}x_1+\cdots +a_{jd}x_d)\partial/\partial_{x_{d+1}}\ \text{for $j\in [d]$ and}\ X_{d+1}=\partial/\partial x_{d+1}$$
  forms a vector space. Moreover,
 they satisfy the commutator relations $$[X_i,X_j]=(a_{ji}-a_{ij})X_{d+1}\ \text{ for $ i,j\in [d]$,}$$ with the other commutators vanished. Thus for any matrix $A$, it holds that  $$[[ \mathfrak{g}_A,\mathfrak{g}_A], \mathfrak{g}_A]=0,\ \text{ i.e.,  the step of   $\mathfrak{g}_A$ is   2 when $A^T\ne A$}.$$

\subsection{Convolution Form in $\mathbb{G}_A$}
The group $\mathbb{G}_A$ has the inverse element of $(y,y_{d+1})$:
 $$(y,y_{d+1})_A^{-1}=\left(-y,-y_{d+1}+\langle Ay,y\rangle\right).$$ 
Thus the group convolution of $f*_{\mathbb{G}_A}g$ for two   functions $f$ and $g$  is  given by
\begin{align}\label{con1}
f*_{\mathbb{G}_A}g(x,x_{d+1})&=\int f\left((x,x_{d+1})\cdot (y,y_{d+1})_A^{-1}\right) g(y,y_{d+1})dydy_{d+1}\nonumber\\
&= \int f(x-y,x_{d+1}-y_{d+1}-\langle A(x-y),y\rangle)g(y,y_{d+1})dydy_{d+1}
\end{align}
where $dydy_{d+1}$ is the Haar measure of $\mathbb{G}_A$. For a simpler expression, we set 
\begin{align*}
f \circ_A g(x,x_{d+1})&:= \int f(x-y,x_{d+1}-y_{d+1}-\langle Ax,y\rangle)g(y,y_{d+1})dydy_{d+1}\ \text{so that}\\
 f\circ_A g&=\mathcal{U}_{-A}(\mathcal{U}_{A}f*_{\mathbb{G}_{-A^T}}  g)   
\ \text{ with $\mathcal{U}_Af(x,x_{d+1}):=f(x,x_{d+1}+\langle Ax,x\rangle)$. }
\end{align*} 
Then one can  represent  the average $\mathcal{A}_{S^{d-1}(A)}f(x,x_{d+1},t)$  in  (\ref{11}) as
\begin{align}\label{isig}
 f\circ_A  [d\sigma_t\otimes \delta_{d+1}] (x,x_{d+1})  
& =\mathcal{U}_{-A}(\mathcal{U}_Af*_{\mathbb{G}_{-A^T}}  [d\sigma_t\otimes \delta_{d+1}]  )(x,x_{d+1}).  
\end{align}
By this with Remark \ref{ob11},   we regard   $\mathcal{A}_{S^{d-1}(A)}f(\cdot,t)$ as   $f *_{\mathbb{G}_{A}}[d\sigma_t\otimes \delta_{d+1}](\cdot)$ in $\mathbb{G}_A$.

\subsection{Failure of Local Coordinates of $\mathbb{G}_A$ from   $\mathfrak{g}_A$ when $A\ne A_{\rm{skew}}$}
To identify the Lie algebra $\mathfrak{g}_A$ and the Lie group $\mathbb{G}_A$ via $\text{exp}:\mathfrak{g}\rightarrow\mathbb{G}_A$  preserving the group law in the local coordinates, we may assume that there is  a one-parameter family $\left\{\text{exp}(t X)\right\}_{t} $ of diffeomorphisms  for small $t$ satisfying  $$ \text{exp}\left(\sum_{j=1}^{d+1} u_j X_j  \right)(x,x_{d+1})=(u,u_{d+1}) \cdot (x,x_{d+1}), 
$$
which gives the local coordinates $u$ for the group $\mathbb{G}_A$.   However, some issues arise from the Campbell-Hausdorff formula,  $$\text{exp}(X)\cdot \text{exp}(Y)=\text{exp}(X+Y+\frac{1}{2}[X,Y]).$$ For $X=\sum u_i X_i$ and $Y=\sum v_j X_j$  with $ A_{\rm{skew}}=\frac{ (A-A^T)}{2}$, 
$$X+Y+\frac{1}{2}[X,Y] =  \sum_{i=1}^d(u_i+v_i) X_i+\left( (u_{d+1}+v_{d+1})+ \langle A_{\rm{skew}} u,v\rangle \right)X_{d+1}.$$
 Thus, the coordinate of the RHS of the Campbell-Hausdorff formula is 
\begin{align}\label{iia}
&\text{exp}\left(X+Y+\frac{1}{2}[X,Y]\right)(x,x_{d+1})\nonumber\\
 &\qquad= \left( u+v, u_{d+1}+v_{d+1}+ \langle A_{\rm{skew}} u,v\rangle  \right) \cdot (x,x_{d+1})
\end{align}
removing the  symmetric part $A_{\rm{sym}}= \frac{ (A+A^T)}{2}$. Meanwhile the LHS   is
\begin{align*}
\text{exp}(X)\cdot \text{exp}(Y)(x,x_{d+1}) =(u,u_{d+1})\cdot (v,v_{d+1})\cdot (x,x_{d+1}) .
\end{align*}
Thus, in  cases $A\ne A_{\rm{skew}}$,  the algebra  $\mathfrak{g}_A$ with the exponential map  would not give a local coordinate of the Lie group $\mathbb{G}_A$ that  preserves the group law.
\subsection{Interpretation of $\mathcal{A}_{S^{d-1}(A)}$ in the Context of the Lie Theory}  \label{Sec23}
 To   keep this  Lie theory, we  can switch our base group $\mathbb{G}_A$ with the group $\mathbb{G}_{A_{\rm{skew}}}$ induced from the skew symmetric matrix $A_{\rm{skew}}= \frac{ (A-A^T)}{2}$, having the local coordinate from the Lie algebra $\mathfrak{g}_{A_{\rm{skew}}}$ via $\text{exp}$. By utilizing the identity
\begin{align*}
\left\langle Ax,y\right\rangle&=\frac{1}{2}\left\langle A_{\rm{sym}}(x-y),(x-y)\right\rangle
-\left\langle A_{\rm{skew}}x,y\right\rangle -\frac{1}{2}\left\langle A_{\rm{sym}} x,x\right\rangle-\frac{1}{2}\left\langle A_{\rm{sym}} y,y\right\rangle
\end{align*}
we can express  $\mathcal{A}_{S^{d-1}(A)}f$ as the convolution in the Heisenberg type group $\mathbb{G}_{A_{\rm{skew}}}$:
\begin{align}\label{03}
[\mathcal{A}_{S^{d-1}(A)}f](x,x_{d+1})&= [f]*_{\mathbb{G}_{A_{\rm{skew}}}}\mu_{\rm{sym}} \left(x,x_{d+1}\right)
\end{align}
where $[f](x,x_{d+1})=f(x,x_{d+1}+ \frac{1}{2}\langle A_{\rm{sym}}x,  x\rangle)$ and
  the measure $\mu_{\rm{sym}}$ is
$$
 \mu_{\rm{sym}}(f) =\int_{y\in S^{d-1}} f\left(y, \frac{1}{2}\left\langle A_{\rm{sym}}y,y\right\rangle\right)d\sigma(y).
$$
Therefore, from  the RHS of  (\ref{03}) expressed as $$ [f]*_{\mathbb{G}_{A_{\rm{skew}}}}\mu_{\rm{sym}} \left(x,x_{d+1}\right)=\int_{S^{d-1}} [f]\left(x-y, x_{d+1}-\langle A_{\rm{skew}} x,y\rangle -\frac{1}{2}\langle A_{\rm{sym}} y,y\rangle\right) d\sigma(y)$$
we can interpret $ \mu_{\rm{sym}}$ as the spherical measure, lifted to the quadratic surfaces  in  $\mathbb{G}_{A_{\rm{skew}}}$, supported $\{(y, \frac{1}{2}\left\langle A_{\rm{sym}}y,y\right\rangle):y\in S^{d-1}\} $. Compare it with (i) and (ii):
\begin{itemize}
\item[(i)]
the sphere  measure   on the horizontal plane $\{(y,0):y\in S^{d-1}\}$  in (\ref{1212})
\item[(ii)]
  the sphere measure lifted to the hyper-plane $\{(y,\langle a,y\rangle),y\in S^{d-1}\}$ in (\ref{1313}).
\end{itemize}

From the  results of many authors, it can be observed that the skew symmetric part  $\langle A_{\rm{skew}}x,y\rangle$,    without the symmetric part $\langle A_{\rm{sym}}y,y\rangle$, keeps the curvature of the sphere $S^{d-1}$ in $\mathbb{R}^d$, maintaining the $L^p$ boundedness properties of the maximal averages on the Heisenberg type groups.   However, in this paper, we will demonstrate that the symmetric part $\langle A_{\rm{sym}}y,y\rangle $ tends to eliminate some curvatures of the sphere in many cases of $A$. For example, if $A=I$ such that $A_{\rm{sym}}=I$ and $A_{\rm{skew}}={\bf 0}$, then
$\mathcal{M}_{S^{d-1}(A)}$ is not bounded in any $L^p(\mathbb{R}^{d+1})$. 
 The purpose of this paper is to investigate the extent to which  the symmetric part $\frac{1}{2}\langle A_{\rm{sym}} y,y\rangle$ interferes with   curvature of $S^{d-1}$ in  the group $\mathbb{G}_{A_{\rm{skew}}}$. 
\\

\noindent
{\bf Notations}.  
 Given  two scalar expressions $A,B$,  we write $A\lesssim B$   if $A\le CB$ for a   constant $C>0$ independent of $A,B$. The notation $A\approx B$ indicates $A\lesssim B$ and $B\lesssim A$. We write  $A\lesssim   2^{\epsilon j}B$ if there is $C_\epsilon$ such that $A\le C_\epsilon 2^{\epsilon j}B$ for sufficiently small $\epsilon$. Our positive constants $c$ and $C$ can be also different   line by line.  
   Given two vectors $u$ and $v$ in $\mathbb{R}^d$, we write 
$v=u+O(\rho)$ if there exists $C>0$ independent of $u$, $v$, and $\rho$ such that
\begin{align}\label{30kg}
|v-u|\le C\rho.
\end{align}
In this paper,  we employ the following smooth non-negative cutoff functions
 \begin{itemize}
 \item[(1)] $\psi$ supported in $\{u\in\mathbb{R}^d :|u|\le 1\}$ for $\psi(u)\equiv 1$ in $|u|<\frac{1}{2}$ and $\psi^c=1-\psi$,
 \item[(2)]  $\chi$  supported in $\{ u\in \mathbb{R}^m:1/2\le |u|\le 2\}   $  for $m=1,d$ depending on cases,
  \end{itemize}
  allowing slight line-by-line modifications of $\chi$ and $\psi$.    We denote the phase functions  by $\Phi,\phi$, and  the integral kernels by $K,\Psi$, which can be different in cases.   
Write $[d]=\{1,\cdots,d\}$. Finally, for $A,B\in M_d(\mathbb{R})$, we denote $A\sim B$ if $A$ is row equivalent to $B$, and  we write $\mathcal{C}(A)$ as the column space of   $A$ and $\mathcal{P}_S$  as the projection matrix from $\mathbb{R}^d$  to a subspace $S\subset \mathbb{R}^d$.

 \subsection{Idea of Proof}
In \cite{MS}, the average operator $\mathcal{A}_{S^{d-1}(J)}$ is treated as the generalized Radon transform and Fourier integral operator.    Their estimate is based on the observation that the corresponding phase functions have two-sided fold singularities.
In this paper,  we express $\mathcal{A}_{S^{d-1}(A)}$ as a pseudodifferential operator with the symbol $\widehat{d\sigma}\left(t(\xi+ \xi_{d+1} Ax)\right)$ so that
 $\mathcal{A}_{S^{d-1}(A)}f(x,x_{d+1},t)$ in (\ref{11}) is  
\begin{align}\label{49b}
&\mathcal{A}_{S^{d-1}(A)}f(x,x_{d+1},t)\nonumber\\
&\qquad=\int_{y\in S^{d-1}}\left(\int_{\mathbb{R}^{d+1}} e^{2\pi i (\xi,\xi_{d+1})\cdot (x-ty, x_{d+1}-\langle Ax,ty\rangle)} \widehat{f}(\xi,\xi_{d+1})d\xi d\xi_{d+1} \right)d\sigma(y)\nonumber\\
&\qquad =  \int_{\mathbb{R}^{d+1}} e^{2\pi i \langle (x,x_{d+1}),(\xi,\xi_{d+1})\rangle}\widehat{d\sigma}\left(t(\xi+ \xi_{d+1} Ax)\right)\widehat{f}(\xi,\xi_{d+1})d\xi d\xi_{d+1}. 
\end{align}
On the second line above, we  used  the Fourier inversion formula in the Euclidean space $\mathbb{R}^{d+1}$. Here  $\widehat{d\sigma}$ denotes the Fourier transform of the measure $d\sigma$, defined in $\mathbb{R}^d$,  given by
\begin{align}
\widehat{d\sigma}(\xi)= 2\pi J_{(d-2)/2}(2\pi |\xi|)|\xi|^{-\left( \frac{d-2}{2}\right)}\ \text{for $\xi\in \mathbb{R}^d$}. \label{p54}
\end{align}
So  the asymptotic expansion of  the Bessel function $J_{(d-2)/2}$ enables us
  to utilize the     wave propagating factor $e^{2\pi it|\xi|}$. In our proof,  the main case  is $|\xi+\xi_{d+1}Ax|\approx |\xi_{d+1}|\approx 2^j$ in (\ref{49b}), which corresponds to the    oscillatory integral operators $\mathcal{T}_j$:
 \begin{align*}
 \mathcal{T}_j g(x,t)&=  2^{(d/2)j}  \chi\left(t\right)\psi\left(x\right) \int_{\mathbb{R}^d}
e^{2\pi i 2^j\phi(x,t,\xi)}
\chi\left(  t|\xi+x| \right) \widehat{g}(  \xi)d\xi\ \text{where}\\
 \phi(x,t,\xi)&=\langle Ax, \xi\rangle+t|\xi+x|.
\end{align*}
The mixed hessian of the phase $\phi$ with respect to $(x,t)$ and $\xi$ is $(d+1)\times d$ matrix: 
\begin{align}
\phi_{(x,t)\xi}''=\left(\begin{matrix}\phi''_{x\xi}\\
\phi''_{t\xi}\end{matrix}\right)=\left(\begin{matrix}\left(A+\frac{t }{|u|}I\right)- \frac{t (u_iu)}{|u|^3} \\
u/|u|
 \end{matrix}\right) \sim  \left(\begin{matrix}\left(A+\frac{t }{|u|}I\right)  \\
u/|u|
 \end{matrix}\right) \label{jy}
 \end{align}
where $(u_iu)_{i=1}^d\in M_d(\mathbb{R})$ with $u=x+\xi$. 
We obtain Main Theorem 1    from $\|\mathcal{T}_j\|_{op}=O(1)$ due to  H\"{o}rmander's decay  as
\begin{itemize}
\item the condition $\Lambda(A)\cap \mathbb{R}=\emptyset$ yields  the non-degeneracy of   $A+\frac{t }{|u|}I$ in (\ref{jy}).
\end{itemize}
For   Main Theorem 2,  we split  $A+\frac{t }{|u|}I$ in $\phi''_{x\xi}$  as   $ (A-\mu I)+(\mu+\frac{t}{|u|}) I$ for $\mu\in \Lambda(A)\cap \mathbb{R}$ in (\ref{jy}). Our key estimate is
   $$\|\mathcal{T}_j\|_{op}=O( 2^{(d/2)j}   2^{-j(d-m_A)/2}2^{-j/2} ) =O(2^{(m_A-1)j}) $$
which is $O(1)$ if $m_A=1$.  This estimate
 is based on the observation.
\begin{itemize}
\item the main part $ A-\mu I $ in $\phi''_{x\xi}$ behaves like  the projection $\mathcal{P}_{\mathcal{E}^{\perp}(\mu)} $ due to $\mathcal{C}(A-\mu I)\oplus \mathcal{E}(\mu)=\mathbb{R}^d$ under the hypothesis $n(\mu)=1$.  This derives the decay   $2^{-j(d-m_A)/2} $, and   $u/|u|$ in  $\Phi''_{t\xi}$ generates
 the other   $2^{-j/2}$
in $\|\mathcal{T}_j\|_{op}$. 
\end{itemize} 
For (\ref{a245}) of Main Theorem 3,   the matrix $\frac{t (u_iu)}{|u|^3}  $ in the middle of (\ref{jy}) is not removed due to the absence of  $\phi_{t\xi}''$. However,  $(u_iu)\sim |u|({\bf e}_1|{\bf 0}|\cdots|{\bf 0})$  helps to 
compute  $\det(\phi''_{x\xi})$. Note  that
 it is  $|\xi+\xi_{d+1}Ax|\ll |\xi_{d+1}|$   in (\ref{49b}) where we lose    $1/6$ derivative  when $A$ is  skew symmetric.
\\
\\
{\bf Organization}
In Section 2, we replace the estimate of the maximal functions with that of the oscillatory integral operators $\mathcal{T}_j$.
In Section 3, we establish basic oscillatory estimates and prove Main Theorem 1.
In Section 4, we utilize the linear algebra $\mathcal{C}(A-\mu I)\oplus \mathcal{E}(\mu)=\mathbb{R}^d$ to prove Main Theorem 2.
In Section 5, we obtain the necessity of Main Theorems 1 and 2.
In Section 6, we prove the regularity estimate for Main Theorem 3.
In the Appendix, we provide a rather standard argument for proving a bulk of lemmas on the nilpotent groups.
\section{Reduction to Oscillatory Integral Operators $\mathcal{T}^\lambda$}\label{sec3}
To change  the maximal estimate to that of the  oscillatory integral operators $\mathcal{T}_j$, we shall    utilize  (i) the asymptotic expansion of $\widehat{d\sigma}(\xi)$,  (ii) the majorization of $L^\infty(dt)$ by $L^2_{1/2}(dt)$ and (iii)  the Plancherel theorem with respect to  $x_{d+1}$. 
\subsection{Frequency Decomposition}
Let  $\widehat{d\sigma}$ be the Euclidean Fourier transform of the
 measure $d\sigma$ on the unit sphere $S^{d-1}$. Decompose
 $  \widehat{d\sigma}(\xi)=\sum_{j=0}^\infty\widehat{d\sigma_j}(\xi)$ 
 with  
\begin{align}\label{kc1}
\widehat{d\sigma_0}(\xi)&=\widehat{d\sigma}(\xi)\psi(\xi)
  \ \text{and}\ \widehat{d\sigma_j}(\xi)=\widehat{d\sigma}(\xi)\chi\left(\frac{\xi}{2^j}\right)\ \text{for $j\ge 1$}
\end{align}
 having the frequency support $|\xi|\le 1$ and $|\xi|\approx 2^j$ respectively. Next define
\begin{align}
\mathcal{A}_{j} f(x,x_{d+1},t)&: 
 = \int_{\mathbb{R}^d}  f\left(x-ty,x_{d+1}-\langle Ax, ty\rangle\right)d\sigma_j(y) \label{kcc1}
  \end{align} 
where $(x,x_{d+1},t)\in\mathbb{R}^{d+1}\times\mathbb{R}$. To each   integer $j \ge 0$, we assign the corresponding maximal operator $\mathcal{M}_j$ defined by
\begin{align*} 
\mathcal{M}_jf(x,x_{d+1})= \sup_{t>0}|\mathcal{A}_{j}f(x,x_{d+1},t)|.
\end{align*}
Using the well-known estimate for 
$j\ge 0$ in (\ref{kc1}), we have
\begin{align*}
 \int e^{2\pi i\xi\cdot x}\widehat{d\sigma_j}(\xi)d\xi=\int_{S^{d-1}} 2^{dj} \chi^{\vee}\left(2^j(x-y)\right)d\sigma(y)=O\left(\frac{2^j}{(1+|x|)^{N}}\right).
\end{align*}
From this, we  deduce the existence of a constant 
 $C_p$ for every $1<p\le \infty$ such that
\begin{align}
 \|\mathcal{M}_j\|_{L^{p}(\mathbb{R}^{d+1})\rightarrow L^p(\mathbb{R}^{d+1})}\le  C_p2^j.\label{kcc2}
\end{align}  
A detailed proof of this estimate will be provided in the Appendix.

\subsection{Symbol Representations}
In view of  (\ref{49b}),  express $\mathcal{A}_{j} f$ in      (\ref{kcc1}) as
\begin{align*}
\mathcal{A}_{j} f(x,x_{d+1},t)
& =\int_{\mathbb{R}^{d+1}} e^{2\pi i \langle (x,x_{d+1}),(\xi,\xi_{d+1})\rangle}\widehat{d\sigma_{j}}\left(t(\xi+ \xi_{d+1} Ax)\right)\widehat{f}(\xi,\xi_{d+1})d\xi d\xi_{d+1}.
\end{align*}
For $t>0$  dilate the measure $(d\sigma_j)_t$ such that $\widehat{(d\sigma_j)_t }(\xi)=\widehat{(d\sigma_j)}(t\xi)$.
  Next for each $k\in\mathbb{Z}$, 
from (\ref{isig}),   set  the average:
\begin{align}\label{0p}
\mathcal{A}_{j,k} f(x,x_{d+1},t):=f\circ_A [(d\sigma_j)_t\otimes \delta](x,x_{d+1}) \chi(2^kt) 
\end{align}
whose   symbol 
 is $ \widehat{d\sigma_{j}}\left(t\big(\xi+ \xi_{d+1} Ax\big)\right) \chi(2^kt)$ in the form of pseudo differential operator.
 Then the following pointwise inequality holds,
 \begin{align} \label{115a}
|\mathcal{M}_{j}f(x,x_{{d+1}})|&\le\sup_{k\in\mathbb{Z}}\sup_{t>0} \left| \mathcal{A}_{j,k}f(x,x_{{d+1}},t)\right|.  \end{align} 
 
        \subsection{Majorizing $L^\infty(\mathbb{R}_+)$ by $L^2_{1/2}(\mathbb{R}_+)$}
In   (\ref{0p}),  it holds that
$$ \partial_t\mathcal{A}_{j,k}f(x,x_{d+1},t)=   
f\circ_A   [ \partial_t[(d \sigma_{j})_t\chi(2^kt)]\otimes \delta_{d+1}](x,x_{d+1}).$$
Here,   the frequency part of $   \partial_t[(d \sigma_{j})_t\chi(2^kt)] $ at $\xi\in \mathbb{R}^d$ can be expressed as
\begin{align}\label{a45}
  2^k\left[ \widehat{d\sigma}(t\xi) \tilde{\chi}\left(\frac{t|\xi|}{2^j}\right) \tilde{\chi}(2^kt) \right]+ 2^{j+k}\left[\frac{\xi}{|\xi|}\cdot\nabla[\widehat{d\sigma}](t\xi)\chi\left(\frac{t|\xi|}{2^j}\right)\chi(2^kt)\right].
\end{align}
On the other hand,  switch  $\sup_t$ of  (\ref{115a}) with $L^2(dt)$ in below,  
\begin{align}\label{p52}
\sup_{t>0} |\mathcal{A}_{j,k}f(x,x_{{d+1}},t)|^{2}&  \le 2\int_{0}^\infty \left|\partial_t \left(\mathcal{A}_{j,k}f(x,x_{{d+1}},t)  \right) \right|  \left|\mathcal{A}_{j,k}f(x,x_{{d+1}},t) \right| dt\nonumber\\
&\le C  2^{j+k} \int_{0}^\infty \left| \tilde{\mathcal{A}}_{j,k}f(x,x_{{d+1}},t)    \right|^2 dt.
\end{align}
We take  $\tilde{\mathcal{A}}_{j,k}$ as the pseudodifferential operator defined by 
$$\tilde{\mathcal{A}}_{j,k}f(x,x_{d+1},t)=   
f\circ_A   [ (d\tilde{\sigma}_{j})_t\otimes \delta_{d+1}](x,x_{d+1})\chi(2^kt), $$
with  the symbol  $ \widehat{(d\tilde{\sigma}_{j})_t} \left(\big(\xi+ \xi_{d+1} Ax\big)\right) \chi(2^kt)$, expressed as
\begin{align}
\tilde{\mathcal{A}}_{j,k}f(x,x_{d+1},t)&=\int e^{2\pi i(x,x_{d+1})\cdot (\xi,\xi_{d+1})}\widehat{d \tilde{\sigma}} \left(t\big(\xi+ \xi_{d+1} Ax\big)\right) \nonumber\\
&\times\chi\left( \frac{t\big(\xi+ \xi_{d+1} Ax\big)}{2^j} \right)\chi(2^kt)\widehat{f}(\xi,\xi_{d+1})d\xi d\xi_{d+1}. \label{3990}
\end{align}
Here  we choose $\widehat{d\tilde{\sigma}}$   from  the second term of (\ref{a45})  since it dominates the first term.
\begin{align}\label{i7}
\widehat{d\tilde{\sigma}}(\xi)&=\int_{S^{d-1}}  e^{2\pi i  \xi \cdot y} \,  (\xi/|\xi|)\cdot y   d\sigma(y)+\widehat{d\sigma}(\xi).
\end{align}
Since  the decay rates of $\int_{S^{d-1}}  e^{2\pi i  \xi \cdot z} \,  (\xi/|\xi|)\cdot z   d\sigma(z) $  and $\widehat{d\sigma}(\xi)$ in $|\xi|$ are similar, we can identify  $\widehat{d\tilde{\sigma}}(\xi)$ with 
$\widehat{d\sigma}(\xi)$ in (\ref{3990}). By (\ref{115a}) and (\ref{p52}), we have
      \begin{align}
\|\mathcal{M}_jf\|_{L^2(\mathbb{R}^{d+1})}  &\le  2^{j/2} \left(\sum_{k\in\mathbb{Z}}\left\|2^{k/2}\tilde{\mathcal{A}}_{j,k}f\right\|_{L^2(\mathbb{R}^{d+1}\times \mathbb{R})}^2\right)^{1/2}. \label{oj11}
\end{align}  
In the next two subsection, we   reduce our matters to the case   $k=0$ in (\ref{oj11}).

\subsection{Littlewood-Paley Decomposition}
Note that $\psi$  is supported in $|(\xi,\xi_{d+1})|\le1$ in $\mathbb{R}^{d+1}$ and
 $\psi(\xi,\xi_{d+1})\equiv 1$ on $|(\xi,\xi_{d+1})|\le1/2$. Using the nonisotropic dilation, we set
\begin{align}\label{10061}
\widehat{P_j}(\xi,\xi_{d+1})=\psi\left( \frac{\xi}{2^{j+1}},\frac{\xi_{d+1}}{2^{2(j+1)}}\right)- \psi\left( \frac{\xi}{2^{j}},\frac{\xi_{d+1}}{2^{2j}}\right),
\end{align}
where $\widehat{}$ is the Fourier transform in the Euclidean space $\mathbb{R}^{d+1}$.
We consider the Littlewood-Paley projection $\mathcal{P}_jf$   of $f\in \mathcal{S}(\mathbb{R}^{d+1})$ by defining $\mathcal{P}_jf(y,y_{{d+1}})$ as  
\begin{align*}
& \int_{  (\eta,\eta_{d+1})\in \mathbb{R}^d\times \mathbb{R}}e^{2\pi i \langle (\eta,\eta_{{d+1}}),(y,y_{{d+1}}) \rangle}\widehat{P_j}(\eta+\eta_{d+1}Ay,\eta_{d+1})\widehat{f}(\eta,\eta_{{d+1}})d\eta d\eta_{{d+1}}. 
\end{align*}
By using $\sum_{\ell\in\mathbb{Z}} \mathcal{P}_{j+k+\ell}=Id$ in (\ref{oj11}), one has
\begin{align}
&RHS\ \text{of (\ref{oj11})}  \lesssim 2^{j/2} \sum_{\ell\in\mathbb{Z}} \left\| \left(\sum_{k\in \mathbb{Z}}|2^{k/2}\tilde{\mathcal{A}}_{j,k}\mathcal{P}_{j+k+\ell} f|^2\right)^{1/2}\right\|_{L^2(\mathbb{R}^{{d+1}}\times \mathbb{R})}.\label{0453}
\end{align} 
\begin{proposition}\label{prop03} 
Consider   $\tilde{\mathcal{A}}_{j,k}$ in (\ref{3990}). For an arbitrary small $\epsilon>0$, it holds,
\begin{align*}
& \sum_{\ell\in\mathbb{Z}} \left\| \left(\sum_{k\in \mathbb{Z}}|2^{k/2}\tilde{\mathcal{A}}_{j,k}\mathcal{P}_{j+k+\ell} f|^2\right)^{1/2}\right\|_{L^2(\mathbb{R}^{{d+1}}\times \mathbb{R})}\\
&\qquad\qquad\qquad \lesssim  2^{\epsilon j} \left(1+\sup_k  \left\| 2^{k/2}\tilde{\mathcal{A}}_{j,k}  \right\|_{ L^2(\mathbb{R}^{d+1})\rightarrow L^2(\mathbb{R}^{d+1}\times \mathbb{R})} \right)\|f\|_{L^2(\mathbb{R}^{d+1})}.
\end{align*}
 \end{proposition}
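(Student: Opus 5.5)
We split the $\ell$-sum into the regime $|\ell|\le C\epsilon j$ (together with a bounded range) and the two tails $\ell\gg \epsilon j$ and $\ell\ll -\epsilon j$. The moderate range contributes $O(\epsilon j)=O(2^{\epsilon j})$ terms. For each such term we use
\[
\Big\|\Big(\sum_k|2^{k/2}\tilde{\mathcal{A}}_{j,k}\mathcal{P}_{j+k+\ell}f|^2\Big)^{1/2}\Big\|_{L^2}^2\le \sup_k\|2^{k/2}\tilde{\mathcal{A}}_{j,k}\|^2_{L^2\to L^2(dt)}\sum_k\|\mathcal{P}_{j+k+\ell}f\|_2^2 .
\]
This needs the almost-orthogonality $\sum_m\|\mathcal{P}_mf\|_2^2\lesssim\|f\|_2^2$ for the twisted projections $\mathcal{P}_m$.

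To obtain it, we write $\mathcal{P}_m$ as a group convolution, exactly as $\mathcal{A}_{j,k}$ was written via (\ref{isig}). Its symbol $\widehat{P_m}(\eta+\eta_{d+1}Ay,\eta_{d+1})$ is the one of $f\circ_A K_m$ with $K_m$ the Euclidean kernel of $\widehat{P_m}$. Hence $\mathcal{P}_m=\mathcal{U}_{-A}(\mathcal{U}_A\,\cdot\,*_{\mathbb{G}_{-A^T}}K_m)$, where the $K_m$ are nonisotropic dilates of a Schwartz function with vanishing integral (difference of two $\psi$'s). The dilations $(x,x_{d+1})\mapsto(2^mx,2^{2m}x_{d+1})$ are automorphisms of $\mathbb{G}_{-A^T}$ and $\mathcal{U}_A$ is unitary. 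Standard Cotlar–Stein, or the square function theorem on homogeneous groups (which we would put in the Appendix with the other group lemmas), then gives both $\sum_m\|\mathcal{P}_mf\|^2\lesssim\|f\|^2$ and $\sum_m\mathcal{P}_m=Id$.

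For the tails we need decay $2^{-c|\ell|}$ uniformly, with only a polynomial loss $2^{Cj}$ allowed; choosing the cutoff $|\ell|\ge C'\epsilon^{-1}$-dependent multiple of $j$ then absorbs the loss. Heuristically this works because $\tilde{\mathcal{A}}_{j,k}$ has symbol supported where $|\xi+\xi_{d+1}Ax|\approx2^{j+k}$ (as $t\approx2^{-k}$), while $\mathcal{P}_{j+k+\ell}$ localizes $|\eta+\eta_{d+1}Ay|+|\eta_{d+1}|^{1/2}\approx2^{j+k+\ell}$.

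When $\ell\ll0$, the twisted horizontal frequency of $\mathcal{P}_{j+k+\ell}$ is $\ll 2^{j+k}$. We insert $\mathcal{P}_{j+k+\ell}=\mathcal{P}_{j+k+\ell}\tilde{\mathcal{P}}_{j+k+\ell}$ with a slightly fattened $\tilde{\mathcal{P}}$. We then estimate the composition $\tilde{\mathcal{A}}_{j,k}\mathcal{P}_{j+k+\ell}$ as a group convolution of two kernels on $\mathbb{G}_{-A^T}$: $d\tilde\sigma_j$ at scale $2^{-k}$ and $K_{j+k+\ell}$ at the coarser scale $2^{-(j+k+\ell)}$. Using the cancellation of $d\tilde\sigma_j$ (its Fourier transform vanishes to infinite order near the origin since $\chi$ is supported away from $0$), we Taylor expand $K_{j+k+\ell}$ along the group. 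This gives an $L^1$ bound $\lesssim 2^{N\ell}2^{Cj}$.

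When $\ell\gg0$, we reverse the roles. By rescaling we take $k=0$. There is either a large horizontal frequency $|\eta+\eta_{d+1}Ay|\gg2^{j}$, which is mismatched with $\chi(t(\cdot)/2^j)$, or a large vertical frequency $|\eta_{d+1}|\approx2^{2(j+\ell)}$. In both cases we integrate by parts in the $y$-variable of the sphere integral, or equivalently use the cancellation of $K_{j+\ell}$ against the smooth (at scale $2^{-j}$) kernel of $\tilde{\mathcal{A}}_{j,0}$. This yields $2^{-N\ell}2^{Cj}$. Summing these $\ell$ with a square function in $k$ (again via orthogonality of $\mathcal{P}$) bounds the tails by $O(\|f\|_2)$, which accounts for the ``$1+$'' in the statement.

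The main obstacle is the $\ell\gg0$ case with large vertical frequency $|\eta_{d+1}|\approx 2^{2(j+\ell)}$ and small twisted horizontal frequency. The sphere average does not obviously cut off in $\eta_{d+1}$: the symbol $\widehat{d\sigma_j}(t(\xi+\xi_{d+1}Ax))$ can be nonzero for arbitrarily large $\xi_{d+1}$. There we would have to exploit that $x$ is effectively localized, so that $\xi+\xi_{d+1}Ax$ varies at rate $|\xi_{d+1}|$ in $x$. We would first try a $TT^*$ argument in $x$ on this piece, hoping the twisting by $A$ (invertible) produces decay in $|\xi_{d+1}|2^{-2j}$. Failing that, we would accept a loss absorbed by $2^{\epsilon j}$ through a cruder split at $|\ell|\approx\epsilon j$.
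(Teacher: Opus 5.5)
There is a genuine gap, and it sits exactly where you flag it: the tail $\ell\to+\infty$. The rest of your architecture matches the paper's:
\begin{itemize}
\item the $O(j)\lesssim_\epsilon 2^{\epsilon j}$ values $|\ell|\lesssim j$ are handled by $\sup_k\|2^{k/2}\tilde{\mathcal{A}}_{j,k}\|$ together with (\ref{045312}) and Lemma \ref{lem442};
\item the tails need $2^{-c|\ell|}$ decay with at most a $2^{Cj}$ loss, as in Lemma \ref{prop1};
\item your $\ell\ll 0$ argument (vanishing moments of $d\tilde\sigma_j$, mean value theorem for the coarse kernel $P_{j+\ell}$ along the group law) is the paper's Case 2.
\end{itemize}

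For $\ell\gg 0$, the argument you give fails. The kernel of $\tilde{\mathcal{A}}_{j,0}$ is $(d\tilde\sigma_j)_t\otimes\delta_{d+1}$, which has no smoothness at any scale in the $x_{d+1}$ direction. Meanwhile $P_{j+\ell}$ lives at vertical scale $2^{-2(j+\ell)}$. So the cancellation of $P_{j+\ell}$ buys nothing against it. An integration by parts in the sphere variable $y$ does not help either, since its decay only sees the twisted frequency $|\xi+\xi_{d+1}Ax|\approx 2^j$, not $\xi_{d+1}$ itself. The claimed bound $2^{-N\ell}2^{Cj}$ is therefore unsupported, in both of your subcases. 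Your fallback of a cruder split at $|\ell|\approx\epsilon j$ cannot rescue this. The problem is summability over infinitely many $\ell\to+\infty$, which no loss in $j$ and no choice of splitting point can absorb.

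The missing step is to split $\tilde{\mathcal{A}}_{j,0}$ in $\xi_{d+1}$ at a threshold $2^{\theta|\ell|}$ with $0<\theta<1$. Do not split at the scale $2^{2(j+\ell)}$ suggested by $\mathcal{P}_{j+\ell}$. The paper takes $\theta=1/10$ and $|\ell|\ge 100dj$.

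\textbf{High part, $|\xi_{d+1}|\ge 2^{|\ell|/10}$.} Discard $\mathcal{P}_{j+\ell}$ altogether. Use Plancherel in $x_{d+1}$ as in (\ref{011}), together with the extreme-$\lambda$ bound (\ref{545}): $\|\mathcal{T}^\lambda_j\|\lesssim (2^j/\lambda)^{d/2}$ for $\lambda\gg 2^j$. This follows from H\"ormander's estimate, since $\det(\frac{2^j}{\lambda}A+\frac{t}{|u|}I)\approx 1$. It is precisely the ``$TT^*$ in $x$ using the twist by $A$'' you hoped for. It gives $2^{-c|\ell|}$ because $\lambda\ge 2^{|\ell|/10}\ge 2^{10dj}$.

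\textbf{Low part, $|\xi_{d+1}|\le 2^{|\ell|/10}$.} The truncated kernel $K_{j,\ell}$ is Schwartz-like in $x_{d+1}$ at scale $2^{-|\ell|/10}$, cf.\ (\ref{10056}). So a vertical derivative costs only $2^{|\ell|/10}$. The vertical displacement $|z_{d+1}|+|\langle A(y-z),z\rangle|$ is only $\lesssim 2^{-(j+\ell)}$. Subtract $K_{j,\ell}(y,y_{d+1})$ inside $U_{j,\ell}$ using $\int P_{j+\ell}=0$ and apply the mean value theorem. This gives $\|U_{j,\ell}\|_{L^1}\lesssim 2^{j}2^{-c|\ell|}$, as in (\ref{100302}). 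This is why $\theta<1$ is needed, while $\theta|\ell|\gg j$ is needed for the high part.

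A smaller point: the identity $\mathcal{P}_m=\mathcal{P}_m\tilde{\mathcal{P}}_m$ you insert is not exact here. Compositions of these twisted operators correspond to group convolutions of kernels (Lemma \ref{lem600}), not to products of Euclidean symbols. To sum the tails in $k$, apply Lemma \ref{lem442} instead to $T_k=2^{k/2}\tilde{\mathcal{A}}_{j,k}\mathcal{P}_{j+k+\ell}$, with
\[
\|T_{k_1}T_{k_2}^*\|\lesssim\min\{2^{-2c|\ell|},\,2^{Cj}2^{-c|k_1-k_2|}\}.
\]
The second bound comes from (\ref{045312}) and the crude estimate $\|2^{k/2}\tilde{\mathcal{A}}_{j,k}\|\lesssim 2^{j}$.
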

We can obtain Proposition \ref{prop03} by combining
 Lemma \ref{prop1} and the well-known almost orthogonality in Lemma \ref{lem442} below.  
 \begin{lemma}\label{prop1}
  If $|\ell|\ge C j$ with $C\ge 1$, then for $\tilde{\mathcal{A}}_{j,k}$ in (\ref{3990})  there  is  $c>0$ so that
\begin{align}
\left\| 2^{k/2}\tilde{\mathcal{A}}_{j,k}\mathcal{P}_{j+k+\ell} \right\|_{ L^2(\mathbb{R}^{d+1})\rightarrow L^2(\mathbb{R}^{d+1}\times \mathbb{R})}  &\lesssim 2^{-c|\ell|}   \label{00453}
\end{align}
where the implicit bound is independent of $j,k,\ell$. For all integers $k_1,k_2$,
 \begin{align}
\left\|  \mathcal{P}_{k_1} \mathcal{P}_{k_2}^*  \right\|_{ L^2(\mathbb{R}^{d+1})\rightarrow L^2(\mathbb{R}^{d+1})} &\lesssim   2^{-c|k_1-k_2|}. \label{045312}
\end{align}
\end{lemma}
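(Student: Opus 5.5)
The key observation is that both $\tilde{\mathcal{A}}_{j,k}$ and $\mathcal{P}_m$ are pseudodifferential operators whose symbols are functions of the same "twisted" frequency variable $(\xi+\xi_{d+1}Ax,\xi_{d+1})$. This comes from the structure $f\circ_A[\,\cdot\,]$: both are right convolutions on $\mathbb{G}_A$ (after the conjugation $\mathcal{U}_A$ of (\ref{isig})) with kernels independent of $x$. So we work on the group side. Set $T_\lambda g=\int e^{2\pi i x_{d+1}\lambda}\ldots$, i.e. take the Fourier transform in $x_{d+1}$ only and use Plancherel. For each fixed $\lambda=\xi_{d+1}$, conjugation by the modulation $e^{2\pi i\lambda\langle Ax,x\rangle/2}$, or more precisely the twisted convolution structure, turns each operator into a $\lambda$-twisted convolution in $x\in\mathbb{R}^d$. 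Compositions of such operators have symbols composing according to the twisted (Weyl-type) calculus, where the error terms are controlled by $|\lambda|$ times derivatives.

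For (\ref{045312}), I plan to write $\mathcal{P}_{k_1}\mathcal{P}_{k_2}^*$ as group convolution with $P_{k_1}*_{\mathbb{G}}\tilde P_{k_2}$, where $P_k$ is the kernel with nonisotropic dilations $(2^{-k}x,2^{-2k}x_{d+1})$. These dilations are automorphisms of $\mathbb{G}_A$, because the group law is homogeneous of degree 2 in the last coordinate. The kernels are Schwartz with vanishing moments: $P_k$ is a difference of two dilates of an approximate identity, so $\int P_k=0$. The standard Cotlar–Stein/Littlewood–Paley argument on homogeneous groups then applies. Rescaling reduces to the case $k_1=0$, $k_2=-m$ with $m\ge0$. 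Expanding $P_0$ by Taylor to first order in the group variables of $\tilde P_{-m}$'s scale $2^{-m}$ gives an $L^1$ bound $\|P_0*\tilde P_{-m}\|_{L^1}\lesssim 2^{-m}$, using the cancellation of $\tilde P_{-m}$ and the smoothness of $P_0$. By Young's inequality on the group this yields $2^{-c|k_1-k_2|}$.

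For (\ref{00453}), I will use the frequency localization directly. The symbol of $\tilde{\mathcal{A}}_{j,k}$ localizes the twisted frequency to $|\xi+\xi_{d+1}Ax|\approx 2^{j+k}$. The symbol of $\mathcal{P}_{j+k+\ell}$ localizes the nonisotropic size $|\eta+\eta_{d+1}Ay|+|\eta_{d+1}|^{1/2}\approx 2^{j+k+\ell}$. The case $\ell\le -Cj$ is simpler in one respect. There both $|\xi+\xi_{d+1}Ax|$ and $|\xi_{d+1}|^{1/2}$ are $\lesssim 2^{j+k+\ell}\ll 2^{j+k}$ up to the commutation error, so we gain from the smoothness of the composed kernel, i.e. the Schwartz decay of $\widehat{d\tilde\sigma_j}$ localization, giving $2^{-N|\ell|}$ times polynomial factors $2^{Cj}$, which are absorbed since $|\ell|\ge Cj$. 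When $\ell\ge Cj$, the mismatch is either in $|\xi+\xi_{d+1}Ax|\gg 2^{j+k}$, where the symbols are disjoint modulo an error, or in $|\xi_{d+1}|\gg 2^{2(j+k)}$. In the latter case I will integrate by parts in $x_{d+1}$ against the kernel of $\tilde{\mathcal{A}}_{j,k}$, which is a measure on a sphere of radius $t\approx 2^{-k}$ smoothed at scale $2^{-j-k}$. That kernel has no $x_{d+1}$-direction spread, so this must instead be done with the $t$-integration and the $x$-variables, using the transversality of $\langle Ax,ty\rangle$.

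After rescaling by the group dilation, we may take $k=0$. Everything then reduces to a composition of two compactly supported-in-frequency operators, with an explicit oscillatory kernel, and $L^2$ bounds follow by Schur's test.

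The main obstacle is the regime $\ell\ge Cj$ with $|\xi_{d+1}|\gg 2^{2j}$, where the twisted frequency $\xi+\xi_{d+1}Ax$ stays $\approx 2^j$ but $\xi_{d+1}$ is huge. Here the localization sets are not literally disjoint, because the $x$-dependence of $\xi_{d+1}Ax$ varies at the rate $|\xi_{d+1}|$. I would first try to exploit that on the support of $\psi(x)$, with $t\approx1$, the kernel of $\tilde{\mathcal{A}}_{j,0}$ is smooth at scale $2^{-j}$ in $x$. Integrating by parts in $y$ along the sphere against the phase $\xi_{d+1}\langle Ax,ty\rangle$, whose $y$-gradient $t\xi_{d+1}A^Tx$ is large off a small set, would then give the decay $(2^{2j}/|\xi_{d+1}|)^{N}$, hence $2^{-c\ell}$. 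The exceptional set where $A^Tx$ is nearly normal to the sphere would need a separate measure estimate.
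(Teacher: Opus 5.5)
Your argument for (\ref{045312}) is the standard homogeneous-group one and is acceptable; the paper also treats it as standard. You need to match it to the paper's normalization, though. There $P_{-m}$ lives at the \emph{coarse} scale $2^{m}$, so the mean zero is taken from $P_0$ and the Taylor expansion is done on $\tilde P_{-m}$. Your reduction of (\ref{00453}) to $k=0$ by the group dilation matches the paper.

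The genuine gap is in (\ref{00453}) itself, in exactly the regime you flag, $|\xi_{d+1}|\gg 2^{2j}$. This is the heart of the matter, not a side case.

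\textbf{The symbol heuristics fail there.} The symbol of $\tilde{\mathcal{A}}_{j,0}$ is not localized in $\xi_{d+1}$ at all. So your closing claim that everything reduces to two operators compactly supported in frequency is false. The ``disjoint symbols modulo commutation errors'' heuristic also breaks down. Each $x$-derivative of a twisted symbol $a(\xi+\xi_{d+1}Ax,\xi_{d+1})$ brings out a factor $\xi_{d+1}A^T$. Meanwhile $\xi$-derivatives of $\widehat{d\tilde\sigma}(t(\xi+\xi_{d+1}Ax))$ gain nothing, because of the wave factor $e^{\pm 2\pi i t|\xi+\xi_{d+1}Ax|}$.

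\textbf{Your proposed remedy does not work as stated.} You integrate by parts on the sphere against $e^{-2\pi i\xi_{d+1}\langle Ax,ty\rangle}$ in $\int g(x-ty)\,e^{-2\pi i\xi_{d+1}\langle Ax,ty\rangle}\,d\sigma_j(y)$. The derivatives then fall on $g(x-ty)$. Here $g$ is the $\xi_{d+1}$-slice of $\mathcal{P}_{j+\ell}f$. Its Euclidean frequency $\eta=(\eta+\eta_{d+1}Ax)-\eta_{d+1}Ax$ has size up to $|\eta_{d+1}|$, as large as your phase gradient $|\xi_{d+1}A^Tx|$. So nothing is gained, however the exceptional set near $A^Tx\parallel y$ or $x\approx 0$ is handled.

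\textbf{The missing idea: in this regime you do not need $\mathcal{P}_{j+\ell}$ at all.} Because $x_{d+1}$ is central, split the symbol of $\tilde{\mathcal{A}}_{j,0}$ with $\psi(\xi_{d+1}/2^{|\ell|/10})$ and $1-\psi(\xi_{d+1}/2^{|\ell|/10})$. These cutoffs commute with everything, and $\mathcal{P}_{j+\ell}$ is bounded on $L^2$.

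On the high piece, $|\xi_{d+1}|=\lambda\ge 2^{|\ell|/10}\ge 2^{10dj}$. Plancherel in $x_{d+1}$ and (\ref{545}) give $\|\tilde{\mathcal{A}}_{j,0}\|\lesssim 2^{-(d-1)j/2}(2^j/\lambda)^{d/2}\lesssim 2^{-c|\ell|}$. Here (\ref{545}) is H\"ormander's bound for the $(d+1)\times d$ mixed Hessian, whose $t$-row restores full rank. It uses no hypothesis on $\Lambda(A)$. If you want an integration-by-parts proof, do it in the $TT^*$ kernel, in the intermediate variable $w$. There the phase gradient is $\lambda A(x-x')$, and only the smooth kernels $\sigma_j((x-w)/t)$ are differentiated. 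This gives $2^{j/2}(2^j/\lambda)^{d/2}$, which also suffices since $|\ell|\ge Cj$.

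On the low piece, the kernel $K_{j,\ell}$ is smooth at scale $2^{-j}$ in $y$ and $2^{-|\ell|/10}$ in $y_{d+1}$, with $\|K_{j,\ell}\|_{L^1}\lesssim 2^j$. Write the composition as $f\circ_A(P_{j+\ell}*_{\mathbb{G}_{-A^T}}K_{j,\ell})$. One then gets $\|P_{j+\ell}*_{\mathbb{G}_{-A^T}}K_{j,\ell}\|_{L^1}\lesssim 2^{-c|\ell|}$ from:
\begin{itemize}
\item the mean zero of $P_{j+\ell}$ when $\ell>0$;
\item the mean zero of $K_{j,\ell}$ when $\ell<0$ (its Fourier transform vanishes near $\xi=0$);
\item the mean value theorem applied to the other kernel in each case.
\end{itemize}
This is the same mechanism you use for (\ref{045312}), and it replaces your symbol-disjointness arguments for both signs of $\ell$.
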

\begin{proof}[Proof of Lemma \ref{prop1}]
We place its proof (standard)  in Appendix (Sec \ref{Sec114}).
\end{proof}
  \begin{lemma}\label{lem442}
Let $M_1,M_2$ be two measure spaces and let $\{T_k\}_{k\in\mathbb{Z}}$ be a family of the operators   $T_k:L^2(M_1)\rightarrow L^2(M_2)$.  If $\|T_{k_1}T_{k_2}^*\|_{op}\le C(|k_1-k_2|) $ with $\sum_{k\in \mathbb{Z}}  \sqrt{C(|k|)}$ being finite, then   for all $f\in L^2(M_1)$,
\begin{align}
\left\|  \left( \sum_{k\in \mathbb{Z}} |T_k f|^2\right)^{1/2}\right\|_{L^2(M_2)}\lesssim     \left(  \sup_k \|T_k\|_{op} \sum_{k\in \mathbb{Z}}   \sqrt{C(|k|)}\right)^{1/2}  \|f\|_{L^2(M_1)}.\label{8kg}
\end{align}
\end{lemma}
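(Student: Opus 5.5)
Write $F=(T_kf)_{k\in\mathbb{Z}}$ and $T:=\sum_k T_k$. The plan is a $TT^*$ (Cotlar--Stein type) argument applied directly to the square function, via a randomization that turns it into a single operator. For signs $\varepsilon=(\varepsilon_k)\in\{\pm1\}^{\mathbb{Z}}$ (finitely many $k$ first, then take a limit), set $T_\varepsilon=\sum_k \varepsilon_k T_k$. Averaging over independent random signs gives
\[
\mathbb{E}_\varepsilon\|T_\varepsilon f\|_{L^2(M_2)}^2=\sum_k\|T_kf\|_{L^2(M_2)}^2=\Big\|\Big(\sum_k|T_kf|^2\Big)^{1/2}\Big\|_{L^2(M_2)}^2 .
\]
It therefore suffices to bound $\|T_\varepsilon\|_{op}$ uniformly in $\varepsilon$ by the square root of the right side of (\ref{8kg}).

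By Cotlar--Stein, $\|T_\varepsilon\|\le \sup_{k_1}\sum_{k_2}\|T_{k_1}T_{k_2}^*\|^{1/2}$ provided one also controls $\|T_{k_1}^*T_{k_2}\|^{1/2}$. Only the $T_{k_1}T_{k_2}^*$ bound is assumed, so instead I would run the argument by hand on $\|T_\varepsilon^*\|^2=\|T_\varepsilon T_\varepsilon^*\|$. Expanding $(T_\varepsilon T_\varepsilon^*)^N$ yields sums of products $T_{k_1}T_{k_2}^*T_{k_3}T_{k_4}^*\cdots$. Each product is bounded in two ways:
\begin{itemize}
\item by $\prod_i C(|k_{2i-1}-k_{2i}|)$, grouping the factors as pairs $T T^*$;
\item by $\sup_k\|T_k\|^2\prod_i C(|k_{2i}-k_{2i+1}|)$, using the interior pairings $T^*_{k_{2i}}T_{k_{2i+1}}$.
\end{itemize}
The difficulty is that the second grouping requires bounds on $T^*T$, which we lack.

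So the cleaner route is a direct quadratic-form computation. Write
\[
\sum_k\|T_kf\|^2=\Big\langle \sum_k T_k^*T_kf,\,f\Big\rangle\le \|f\|\,\Big\|\sum_kT_k^*T_kf\Big\|.
\]
Then compute
\[
\Big\|\sum_k T_k^*T_kf\Big\|^2=\sum_{k_1,k_2}\langle T_{k_2}T_{k_1}^*\,T_{k_1}f,\;T_{k_2}f\rangle\le\sum_{k_1,k_2}C(|k_1-k_2|)\,\|T_{k_1}f\|\,\|T_{k_2}f\|.
\]
By Schur's test (Young's inequality on $\mathbb{Z}$), the double sum is at most $\big(\sum_k C(|k|)\big)\sum_k\|T_kf\|^2$.

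This produces the mixed bound
\[
S^2\le \|f\|\,\Big(\sum_k C(|k|)\Big)^{1/2}S, \qquad\text{hence}\qquad S\le \Big(\sum_k C(|k|)\Big)^{1/2}\|f\|,\qquad S:=\Big(\sum_k\|T_kf\|^2\Big)^{1/2},
\]
after reducing to finitely many $k$ so that $S<\infty$. To reach the stated form with $\sup_k\|T_k\|\sum_k\sqrt{C(|k|)}$, I would interpolate between $C(|k_1-k_2|)$ and the trivial bound $\|T_{k_2}T_{k_1}^*\|\le\sup\|T_k\|^2$. This uses
\[
C(|k_1-k_2|)\le \min\big(C,\ \sup_k\|T_k\|^2\big)\le \sqrt{C}\,\sup_k\|T_k\|,
\]
and the Schur sum then gives $S^2\le \sup_k\|T_k\|\sum_k\sqrt{C(|k|)}\;\|f\|^2$, which is exactly (\ref{8kg}).

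The main obstacle is the bookkeeping above: correctly pairing $\langle T_{k_2}T_{k_1}^*T_{k_1}f,\,T_{k_2}f\rangle$ so that only the assumed $TT^*$ bounds enter, and doing the finite-truncation limit.
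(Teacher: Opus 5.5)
Your final argument is correct, and it is the standard proof; the paper itself gives no proof of this lemma and simply calls it well known. Your computation is exactly the estimate $\|S\|^2=\|SS^*\|\le\sup_j\sum_k\|T_jT_k^*\|$ for $Sf=(T_kf)_k$. Here $SS^*$ is the block matrix $(T_jT_k^*)_{j,k}$ acting on $\ell^2(\mathbb{Z};L^2(M_2))$, and your steps are: $\sum_k\|T_kf\|^2=\langle\sum_kT_k^*T_kf,f\rangle$, then $\|\sum_kT_k^*T_kf\|^2=\sum_{k_1,k_2}\langle T_{k_2}T_{k_1}^*T_{k_1}f,T_{k_2}f\rangle$, then Schur's test on $\mathbb{Z}$. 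Replacing $C(n)$ by $\min\{C(n),\sup_k\|T_k\|^2\}\le\sqrt{C(n)}\,\sup_k\|T_k\|$ gives (\ref{8kg}) with implicit constant $1$. This step uses $\sup_k\|T_k\|\le\sqrt{C(0)}<\infty$, which follows from $\|T_k\|^2=\|T_kT_k^*\|$. Truncating to finite $F\subset\mathbb{Z}$ (so that $S<\infty$ and you may divide) and then letting $F\uparrow\mathbb{Z}$ by monotone convergence finishes the proof.

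Two corrections to the write-up. First, the inequality you wrote as $C(|k_1-k_2|)\le\min(C,\sup_k\|T_k\|^2)$ should read $\|T_{k_2}T_{k_1}^*\|\le\min\{C(|k_1-k_2|),\sup_k\|T_k\|^2\}$.

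Second, delete the randomization/Cotlar--Stein opening rather than just abandoning it midway. The reduction ``it suffices to bound $\|T_\varepsilon\|$ uniformly in $\varepsilon$'' is not merely hard here; it is false under these hypotheses. Take $T_kf=\langle f,u_k\rangle e$ with $(u_k)$ orthonormal and $\|e\|=1$. Then $T_jT_k^*=0$ for $j\neq k$ and $\|T_kT_k^*\|=1$, yet $\|\sum_{k=1}^N\varepsilon_kT_k\|=\|\sum_{k=1}^N\varepsilon_ku_k\|=\sqrt N$, while $\sum_k\|T_kf\|^2\le\|f\|^2$ by Bessel. The averaged identity $\mathbb{E}_\varepsilon\|T_\varepsilon f\|^2=\sum_k\|T_kf\|^2$ is fine, but passing to $\sup_\varepsilon\|T_\varepsilon\|$ loses too much. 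The quadratic-form argument avoids this because it only ever pairs $T_{k_2}$ with $T_{k_1}^*$.
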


\subsection{Dilation and Localization}
Now that Proposition \ref{prop03} has been proven, the next step is to estimate the operator norm of $2^{k/2}\tilde{\mathcal{A}}_{j,k}$ uniformly in $k$. To accomplish this, it is sufficient to consider the case when $k=0$, as in Lemma \ref{prop23}.
 \begin{lemma}[Dilation]\label{prop23}
For every  $(j,k)\in\mathbb{Z}_+\times \mathbb{Z}$ and  $p\ge 1$,   
\begin{align}\label{yb1}
\left\|2^{k/p}\tilde{\mathcal{A}}_{j,k}\right\|_{L^p(\mathbb{R}^{{d+1}})\rightarrow L^p(\mathbb{R}^{{d+1}}\times\mathbb{R})} &= \left\|\tilde{\mathcal{A}}_{j,0}\right\|_{L^p(\mathbb{R}^{d+1})\rightarrow L^p(\mathbb{R}^{d+1}\times [1,2])},\\
\left\| 2^{k/p}\tilde{\mathcal{A}}_{j,k}\mathcal{P}_{j+k+\ell} \right\|_{ L^p(\mathbb{R}^{d+1})\rightarrow L^p(\mathbb{R}^{d+1}\times \mathbb{R})}&  = \left\|  \tilde{\mathcal{A}}_{j,0}\mathcal{P}_{j+\ell} \right\|_{ L^p(\mathbb{R}^{d+1})\rightarrow L^p(\mathbb{R}^{d+1}\times [1,2])}. \nonumber
\end{align}
 \end{lemma}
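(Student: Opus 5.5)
The plan is to exhibit an explicit isometric change of variables on $L^p(\mathbb{R}^{d+1})$ that conjugates $\tilde{\mathcal{A}}_{j,k}$ to $\tilde{\mathcal{A}}_{j,0}$, combined with the rescaling $t\mapsto 2^{-k}t$. The natural choice is the nonisotropic dilation adapted to $\mathbb{G}_A$,
$$\delta_r(x,x_{d+1})=(rx,r^2x_{d+1}),\qquad D_rf(x,x_{d+1})=r^{(d+2)/p}f(rx,r^2x_{d+1}),$$
which is an automorphism of $\mathbb{G}_A$: indeed $\langle A(rx),ry\rangle=r^2\langle Ax,y\rangle$. Consequently $D_r$ is an isometry of $L^p(\mathbb{R}^{d+1})$ that intertwines left translations.

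\textbf{Step 1: the average.} Write $\tilde{\mathcal{A}}_{j,k}$ in physical space as $f\circ_A[(d\tilde\sigma_j)_t\otimes\delta_{d+1}]\,\chi(2^kt)$. The substitution $x\to 2^{-k}x$, $x_{d+1}\to 2^{-2k}x_{d+1}$, $t=2^{-k}s$ should give
$$\tilde{\mathcal{A}}_{j,k}f(\delta_{2^{-k}}(x,x_{d+1}),2^{-k}s)=\tilde{\mathcal{A}}_{j,0}[f\circ\delta_{2^{-k}}](x,x_{d+1},s).$$
The check is direct: the argument $x-ty$ becomes $2^{-k}(x-sy)$, and $x_{d+1}-\langle Ax,ty\rangle$ becomes $2^{-2k}(x_{d+1}-\langle Ax,sy\rangle)$. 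The measure $(d\tilde\sigma_j)_t$ depends on $t$ only through the dilation $y\mapsto ty$. Equivalently, on the symbol side, $\widehat{d\tilde\sigma}(t(\xi+\xi_{d+1}Ax))\chi(t(\xi+\xi_{d+1}Ax)/2^j)$ is invariant under $\xi\to2^k\xi$, $\xi_{d+1}\to 2^{2k}\xi_{d+1}$, $x\to2^{-k}x$, $t\to 2^{-k}t$, which is the dual of the dilation above.

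\textbf{Step 2: Jacobians and the cutoff in $t$.} Taking $L^p(dx\,dx_{d+1}\,dt)$ norms, the spatial Jacobian $2^{-k(d+2)}$ cancels against the normalization of $D_{2^{-k}}f$. The $dt$ Jacobian $2^{-k}$ is exactly compensated by the factor $2^{k/p}$ raised to the power $p$. The cutoff $\chi(2^kt)=\chi(s)$ restricts $s$ to the support of $\chi$, that is, to $[1/2,2]$, which the paper writes as $[1,2]$ modulo the allowed modification of $\chi$. Since $f\mapsto D_{2^{-k}}f$ is a bijective isometry, the operator norms coincide, and this gives the first identity.

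\textbf{Step 3: the second identity.} This requires that the Littlewood--Paley pieces obey $\mathcal{P}_{j+k+\ell}\,D_{2^{-k}}=D_{2^{-k}}\,\mathcal{P}_{j+\ell}$. This follows because $\widehat{P_m}$ is defined through the same nonisotropic dilation $(\xi/2^m,\xi_{d+1}/2^{2m})$, and its symbol $\widehat{P_m}(\eta+\eta_{d+1}Ay,\eta_{d+1})$ transforms under $y\to2^{-k}y$, $\eta\to 2^k\eta$, $\eta_{d+1}\to2^{2k}\eta_{d+1}$ into $\widehat{P_{m-k}}$. Combining this commutation with Step 1 yields the second identity.

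\textbf{Main obstacle.} The only delicate point is bookkeeping: the twisted symbols involve $\xi+\xi_{d+1}Ax$, so the scaling must be the group-homogeneous one rather than the isotropic one. One should also ensure the direction of the conjugation is consistent, i.e.\ that $2^k$ versus $2^{-k}$ matches the normalization $\chi(2^kt)$. Everything else is a change of variables.
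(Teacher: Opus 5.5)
Your proposal follows the paper's proof: the paper uses the same $L^p$-normalized nonisotropic dilation $(x,x_{d+1},t)\mapsto(2^{-k}x,2^{-2k}x_{d+1},2^{-k}t)$, the same invariance of the symbol $\widehat{d\tilde\sigma}(t(\xi+\xi_{d+1}Ax))\chi(t(\xi+\xi_{d+1}Ax)/2^j)\chi(2^kt)$, and the same commutation of $\mathcal{P}_m$ with the dilation. One small slip in Step 3: with your convention, the identity you need (and the one your symbol computation actually produces) is $D_{2^{-k}}\mathcal{P}_{j+k+\ell}=\mathcal{P}_{j+\ell}D_{2^{-k}}$, whereas the displayed $\mathcal{P}_{j+k+\ell}D_{2^{-k}}=D_{2^{-k}}\mathcal{P}_{j+\ell}$ is false, since it would force $\mathcal{P}_{j+k+\ell}=\mathcal{P}_{j+\ell-k}$.
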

  \begin{proof}
It follows from the nonisotropic dilation  in the appendix (Sec  \ref{app11.2}).
 \end{proof}
 \subsection{Uniform $L^2$ Estimate}
 According to Lemma \ref{prop23}, we can fix $k=0$ by restricting $t$ to the interval $[1,2]$. This restriction also allows us to localize $|x|\leq 10$ in $\mathbb{R}^d$.
To obtain the $L^2$ estimate of $\tilde{\mathcal{A}}_{j,0}$, it is sufficient to fix $\xi_{d+1}=\lambda$ and obtain an uniform $L^2(\mathbb{R}^d)$ bound in $\lambda$ for the scaled operators $\mathcal{A}_{j}^{\lambda}$.
\begin{lemma}
Set   $\mathcal{A}_{j}^{\lambda}$ mapping $g\in L^2(\mathbb{R}^{d})\rightarrow \mathcal{A}_{j}^{\lambda}g\in L^2(\mathbb{R}^{d}\times \mathbb{R})$:
\begin{align*}
 \mathcal{A}_{j}^{\lambda}g (x,t )&:=\lambda^{d/2} \chi\left(t\right) \psi(x)   \int_{\mathbb{R}^d}
e^{2\pi i\lambda  \langle x,  \xi \rangle } \widehat{d\tilde{\sigma}_{j}}\left(t\lambda (\xi+ Ax)\right)   \widehat{g}(  \xi)d\xi. \end{align*} 
Then for $\tilde{\mathcal{A}}_{j,k}$ in (\ref{3990}) with $k=0$, we have
  \begin{align}\label{011}
\left\|\tilde{\mathcal{A}}_{j,0}\right\|_{L^2(\mathbb{R}^{d+1})\rightarrow L^2( \mathbb{R}^{{d+1}}\times\mathbb{R})}\le  \sup_{\lambda} \left\| \mathcal{A}_{j}^{\lambda} \right\|_{L^2(\mathbb{R}^d)\rightarrow L^2(\mathbb{R}^d\times \mathbb{R})}.
  \end{align} 
\end{lemma}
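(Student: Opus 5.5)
The plan is to prove (\ref{011}) in three moves. First we localize in $x$ using left-translation invariance. Then we take the Fourier transform in $x_{d+1}$ and apply Plancherel, which fixes $\xi_{d+1}=\lambda$. Finally a change of scale $\xi=\lambda\eta$ turns the resulting fiber operator into exactly $\mathcal{A}_j^\lambda$.

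\textbf{Step 1 (localization).} By (\ref{isig}) and Remark \ref{ob11}, $\tilde{\mathcal{A}}_{j,0}f(\cdot,t)$ is a right convolution $f\circ_A[(d\tilde\sigma_j)_t\otimes\delta_{d+1}]$. It therefore commutes with the left translations $\tau_{z}f(x,x_{d+1})=f((z,0)\cdot(x,x_{d+1}))$, $z\in\mathbb{Z}^d$, which preserve Lebesgue measure on $\mathbb{R}^{d+1}$. For $t\in[1,2]$ the kernel $d\tilde\sigma_j$ is essentially supported in $|y|\lesssim 1$: by the bound $O(2^j(1+|x|)^{-N})$ recalled after (\ref{kc1}), its tails are rapidly decaying, so they contribute an error that is summable and harmless. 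Hence the value of $\tilde{\mathcal{A}}_{j,0}f$ on $\{|x-z|\le 1\}$ depends, up to such errors, only on $f$ restricted to $\{|x-z|\le 10\}$. We cover $\mathbb{R}^d$ by unit cubes, apply the finite-overlap bound, and use $\tau_z$ to move each piece to the origin. This reduces matters to the operator $\psi(x)\tilde{\mathcal{A}}_{j,0}$ with input $f$ supported in $|x|\le 10$, and its norm controls the norm of $\tilde{\mathcal{A}}_{j,0}$.

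\textbf{Step 2 (Plancherel in $x_{d+1}$).} In (\ref{3990}) the symbol $\widehat{d\tilde\sigma}(t(\xi+\xi_{d+1}Ax))\chi(\cdots)\chi(t)$ does not depend on $x_{d+1}$, so the operator acts fiberwise in $\xi_{d+1}$. Writing $f_\lambda(x)=\int f(x,x_{d+1})e^{-2\pi i\lambda x_{d+1}}dx_{d+1}$, we get
\[
\mathcal{F}_{x_{d+1}}[\psi\tilde{\mathcal{A}}_{j,0}f](x,\lambda,t)=\psi(x)\chi(t)\int e^{2\pi i\langle x,\xi\rangle}\widehat{d\tilde\sigma_j}\big(t(\xi+\lambda Ax)\big)\widehat{f_\lambda}(\xi)\,d\xi=:S_\lambda f_\lambda(x,t).
\]
Plancherel in $x_{d+1}$ then gives $\|\psi\tilde{\mathcal{A}}_{j,0}f\|_{L^2}^2=\int\|S_\lambda f_\lambda\|^2_{L^2(dx\,dt)}d\lambda\le\sup_\lambda\|S_\lambda\|^2\,\|f\|_{L^2}^2$.

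\textbf{Step 3 (rescaling).} We substitute $\xi=\lambda\eta$ (for $\lambda\neq0$; the sign of $\lambda$ is absorbed into $\eta$, and the single value $\lambda=0$ has measure zero) and set $\widehat{g}(\eta)=|\lambda|^{d/2}\widehat{f_\lambda}(\lambda\eta)$, which is an $L^2$ isometry. Then $d\xi\,\widehat{f_\lambda}(\xi)=|\lambda|^{d/2}\widehat g(\eta)\,d\eta$, the phase becomes $\lambda\langle x,\eta\rangle$, and the symbol becomes $\widehat{d\tilde\sigma_j}(t\lambda(\eta+Ax))$. So $S_\lambda f_\lambda=\mathcal{A}^\lambda_j g$ and $\|S_\lambda\|=\|\mathcal{A}_j^\lambda\|$, which yields (\ref{011}).

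The only delicate point is Step 1. The non-commutative translations must be checked to preserve both the $L^2$ norms and the cutoff structure. The Schwartz tails of $d\tilde\sigma_j$ outside the unit scale must also be handled. Here I would use the rapid decay bound $O(2^j(1+|x|)^{-N})$ to dominate the off-diagonal interactions between distant cubes by a convergent sum, which is absorbed into the implicit constant.
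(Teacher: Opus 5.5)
Your Steps 2 and 3 are exactly the paper's proof. The paper sets $\widehat{g_\lambda}(\xi)=\lambda^{d/2}\widehat f(\lambda\xi,\lambda)$, rescales $\xi\to\lambda\xi$, and applies Plancherel in $x_{d+1}$.

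The paper does not carry out your Step 1. It simply writes $\tilde{\mathcal{A}}_{j,0}f=\int \mathcal{A}^\lambda_j g_\lambda\, e^{2\pi i x_{d+1}\lambda}\,d\lambda$, silently inserting the cutoff $\psi(x)$ on the strength of the earlier remark that $t\in[1,2]$ allows localization to $|x|\le 10$. So your Step 1 supplies a step the paper only asserts, and the translation part of it is sound:
\begin{itemize}
\item $\tau_z$ is a measure-preserving shear.
\item A direct computation shows $\tau_z$ commutes with $f\mapsto\int f(x-y,x_{d+1}-\langle Ax,y\rangle)K(y)\,dy$ for any kernel $K$.
\item Equivalently, $\circ_A$ is right convolution in $\mathbb{G}_A$ against the measure lifted by $y_{d+1}\mapsto y_{d+1}+\langle Ay,y\rangle$.
\end{itemize}

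The point to fix is the tail estimate. The bound $O(2^j(1+|y|)^{-N})$ you cite, put into Minkowski's inequality over $|y|\ge 4$, only bounds the off-diagonal part by $O(2^j)\|f\|_{L^2}$. That is not harmless. The chain (\ref{2.26}) in the proof of Proposition \ref{prop44} needs $\|\tilde{\mathcal{A}}_{j,0}\|\lesssim 2^{-(d-1)j/2}2^{s_Aj/2+\epsilon j}$. An additive $O(2^j)$ error cannot be absorbed into a constant multiple of $\sup_\lambda\|\mathcal{A}^\lambda_j\|$, and it would destroy the summation in $j$.

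What you need is the off-sphere decay of the kernel. For fixed $t$, the kernel of $\tilde{\mathcal{A}}_{j,0}(\cdot,t)$ is $t^{-d}K_j(\cdot/t)$. Here $K_j$ is a finite sum of smooth-density measures on $S^{d-1}$ convolved with $2^{dj}\Theta(2^j\cdot)$, $\Theta$ Schwartz; the factor $\xi/|\xi|$ in (\ref{i7}) is smooth on the support of $\chi(\xi/2^j)$. Hence $|K_j(y)|\lesssim_N 2^{-Nj}(1+|y|)^{-N}$ for $|y|\ge 2$. Since $t\in[1/2,2]$, the tail over $|y|\ge 4$ then contributes only $O(2^{-Nj})\|f\|_{L^2}$.

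With that correction, your argument gives (\ref{011}) with an implicit constant plus a negligible additive error, rather than the literal inequality with constant $1$. That weaker form is all that (\ref{2.26}) uses.
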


\begin{proof}[Proof of (\ref{011})]
Let $ \widehat{g_{\lambda}}(\xi)=\lambda^{d/2}\widehat{f}(\lambda \xi,\lambda) $ and change  $\xi\rightarrow \lambda \xi$ to write (\ref{3990}) whose symbol does not depend on $x_{d+1}$, as
$$ \tilde{\mathcal{A}}_{j,0}f(x,x_{{d+1}},t)= \int_{\mathbb{R}}
 \left[ \mathcal{A}_{j}^{\lambda}g_{\lambda} (x,t ) \right]   e^{2\pi i x_{d+1} \lambda}d\lambda.  $$
 Then 
one can  apply  the Plancherel theorem  for $x_{{d+1}}$  whose frequency    $\lambda$,   
\begin{align*}
\int\left[\int_{\mathbb{R}} |\tilde{\mathcal{A}}_{j,0}f(x,x_{{d+1}},t)|^2dx_{d+1} \right]dxdt  &= \int\left[\int_{\mathbb{R}} |\mathcal{A}_{j}^{\lambda}g_{\lambda} (x,t )|^2 d\lambda \right]dxdt\\
&\le \int_{\mathbb{R}} \| \mathcal{A}_{j}^{\lambda}\|_{L^2(\mathbb{R}^d)\rightarrow L^2(\mathbb{R}^{d}\times \mathbb{R})}^2  \|\widehat{g_{\lambda}}(\cdot)\|_{L^2(d\xi)}^2 d\lambda 
\end{align*}
where $\int \|\widehat{g_{\lambda}}(\cdot)\|_{L^2(d\xi)}^2d\lambda=\|f\|_{L^2(\mathbb{R}^{d+1})}^2$. This yields (\ref{011}).
\end{proof}

 \subsection{Wave Propagating Operator}
For the $L^2$ estimate of  $\mathcal{A}_{j}^{\lambda} $, from (\ref{i7}), it suffices to   treat the main part  
$$  
\widehat{d\sigma_{j}}\left(t\lambda (\xi+ Ax)\right) \psi(x)\chi(t)\ \text{where  $\widehat{d\sigma_j}(\xi)= \widehat{d\sigma}(\xi)\chi\left(\frac{\xi}{2^j}\right)$    in (\ref{kc1})},
$$   
of the symbol    $\widehat{d\tilde{\sigma}_{j}}\left(t\lambda (\xi+ Ax)\right) \psi(x)\chi(t)$ of $ \mathcal{A}_{j}^{\lambda}  $. Using the asymptotics of the Bessel function $ J_{(d-2)/2}$ in (\ref{p54})
on   page 356 in \cite{St}, 
 write
  $\widehat{d\sigma_j}(\xi)= \widehat{d\sigma}(\xi)\chi\left(\frac{\xi}{2^j}\right)$    as  
\begin{align}\label{kc3}
 \widehat{d\sigma_j}(\xi)=\sum_{\ell =0}^N \left( a_\ell e^{  2\pi i|\xi|}+   b_\ell e^{  -2\pi i|\xi|} \right)  \frac{2\pi\chi_\ell\left(\frac{\xi}{2^j}\right) }{2^{j\left(\frac{d-1}{2}+\ell\right) }}+O\left( \frac{\tilde{\chi}\left(\frac{\xi}{2^j}\right)}{2^{j\left(\frac{d-1}{2}+N+\frac{1}{2}\right) }   }\right) .
\end{align}
Here   $\chi_\ell (\xi):= \chi(\xi) |\xi|^{-\left(\frac{d-1}{2}+\ell\right)}$   and  $a_0=(\frac{\pi}{2})^{-1/2}e^{-i(\frac{n-1}{4}\pi)}$ and $b_0=(\frac{\pi}{2})^{-1/2}e^{i(\frac{n-1}{4}\pi)}$.
The error term of (\ref{kc3}) for $  \| \mathcal{A}_{j}^{\lambda} \|_{L^2(\mathbb{R}^d)\rightarrow L^2(\mathbb{R}^d\times \mathbb{R})}$, gives a better bound  $O(2^{-Nj})$, since
\begin{align*}
& \int \lambda^{d/2} \chi\left(t\right) \psi(x)\frac{\tilde{\chi}\left(\frac{\left(t\lambda (\xi+ Ax)\right)}{2^j}\right)}{2^{j\left(\frac{d-1}{2}+N+\frac{1}{2}\right) }   } dxdt=O\left(\frac{ \lambda^{d/2}}{2^{j\left(\frac{d-1}{2}+N+\frac{1}{2}\right) }   } \right)  \ \text{and}\\
 &\int \lambda^{d/2} \chi\left(t\right) \psi(x) \frac{\tilde{\chi}\left(\frac{\left(t\lambda (\xi+ Ax)\right)}{2^j}\right)}{2^{j\left(\frac{d-1}{2}+N+\frac{1}{2}\right) }   } d\xi =O\left(\frac{ \lambda^{d/2}  (2^j/\lambda)^d}{2^{j\left(\frac{d-1}{2}+N+\frac{1}{2}\right) }   } \right). 
 \end{align*}
Therefore, to estimate  $  \| \mathcal{A}_{j}^{\lambda} \|_{L^2(\mathbb{R}^d)\rightarrow L^2(\mathbb{R}^d\times \mathbb{R})}$, we   focus on only the first term in (\ref{kc3}) of  $    \frac{e^{  2\pi i|\xi|} }{2^{j(d-1)/2}} \chi \left(\frac{\xi}{2^j}\right) $ 
and set $ \mathcal{T}_{j}^{\lambda} $ satisfying $\mathcal{A}_{j}^{\lambda}\approx \frac{\mathcal{T}_{j}^{\lambda}}{2^{(d-1)j/2}}$, defined by
\begin{align}
\label{5rg}\mathcal{T}_{j}^{\lambda}g (x,t )&= \lambda^{d/2}  \int_{\mathbb{R}^d}
e^{2\pi i\lambda \left(\langle x,  \xi \rangle+t| \xi+Ax|\right)} \chi\left(t\right) \chi\left(\frac{\lambda t|\xi+Ax|}{2^j}\right) \widehat{g}(  \xi)d\xi\end{align}   
which is interpreted as an analog of a wave propagating operator in $\mathbb{G}_A$.

 \section{Estimates of Oscillatory Integral Operators}

  \subsection{Estimate for Oscillatory Integral Operator $\Rightarrow$ Maximal Estimate}
\begin{proposition}\label{prop44}
Consider $\mathcal{T}_{j}^{\lambda} $ in (\ref{5rg}).  Suppose  that   there exists $C_\epsilon>0$ independent of $j$  for sufficiently small $\epsilon>0$ such that
\begin{align}\label{kc4}
  \sup_{\lambda} \left\| \mathcal{T}_{j}^{\lambda} \right\|_{L^2(\mathbb{R}^d)\rightarrow L^2(\mathbb{R}^d\times [1,2])}\le C_{\epsilon}   2^{\epsilon j} 2^{s_Aj/2}\ \text{ for every $j\ge 0$}.
\end{align}
Then   there are two positive constants $C_p$ and $C$ such that
\begin{align}
\|\mathcal{M}_{S^{d-1}(A)}\|_{L^p(\mathbb{R}^{d+1})\rightarrow L^p(\mathbb{R}^{d+1})}&\le C_p\ \text{if $p>\frac{d-s_A}{d-s_A-1}$ and $d-s_A>2$.}\label{2.24}\\
\|\mathcal{M}^{\delta}_{S^{d-1}(A)}\|_{L^2(\mathbb{R}^{d+1})\rightarrow L^2(\mathbb{R}^{d+1})}&\le C \delta ^{(d-2)/2}  (\delta^{-1})^{ s_A/2+\epsilon} \label{2.25}.
\end{align}
Note    $s_A=m_A-1$ in Main Theorems \ref{main1},\ref{main2} and (\ref{004}).  
\end{proposition}
\begin{proof}[Proof of  Proposition \ref{prop44}]
From    (\ref{115a}),(\ref{0453})   and Proposition  \ref{prop03}  together with  (\ref{yb1}),(\ref{011}) and (\ref{kc4}),  we obtain that  
\begin{align}\label{2.26}
 \left\|\mathcal{M}_{j} \right\|_{L^2(\mathbb{R}^{d+1})\rightarrow L^2(\mathbb{R}^{d+1})} & \lesssim 2^{j/2}  
\sup_k\left\|2^{k/2}\tilde{\mathcal{A}}_{j,k}\right\|_{L^2(\mathbb{R}^{{d+1}})\rightarrow L^2(\mathbb{R}^{{d+1}}\times\mathbb{R})} \nonumber\\
& \lesssim  2^{j/2} \left\| \tilde{\mathcal{A}}_{j,0}\right\|_{L^2(\mathbb{R}^{{d+1}})\rightarrow L^2(\mathbb{R}^{{d+1}}\times\mathbb{R})}  \lesssim 2^{j/2}\sup_{\lambda}  \left\|\mathcal{A}_j^{\lambda}\right\|_{L^2(\mathbb{R}^d)\rightarrow L^2(\mathbb{R}^{d}\times [1,2])} \nonumber\\
&\le2^{j(1/2-(d-1)/2)}  \sup_{\lambda} \left\| \mathcal{T}_{j}^{\lambda} \right\|_{L^2(\mathbb{R}^d)\rightarrow L^2(\mathbb{R}^d\times [1,2])}   \\ 
&\le 2^{j(1/2-(d-1)/2)} 2^{\epsilon j} 2^{s_Aj/2}. \nonumber
\end{align}
By using the interpolation $p_1=2$ in (\ref{2.26}) and $p_0=1+\eta$ with $0<\eta\ll 1$ in (\ref{kcc2}),
\begin{align*}
  \|\mathcal{M}_{S^{d-1}(A)}\|_{L^p\rightarrow L^p}&\le \sum_{j=0}^\infty \|\mathcal{M}_{j}\|_{L^p\rightarrow L^p}  \\
  & \lesssim\sum_{j\ge 0} 2^{\epsilon  j}  2^{j(1-\theta)}2^{j(s_A/2-(d-2)/2)\theta} \\
& \lesssim\sum_{j\ge 0} 2^{(\epsilon+(d+2)\eta) j} 2^{ j[1-(1-1/p)(d-s_A)]} 
\end{align*}
for $\frac{1}{p}=\frac{(1-\theta)}{p_0}+\frac{\theta}{p_1}$, namely, $\theta=2(1-1/p)-O(\eta)$ where $0\le O(\eta)\le 2\eta.  $
Hence, if $p>\frac{(d- s_A)}{(d- s_A)-1}$ with  $d-s_A>2$, that is, $(1-1/p)(d-s_A)>1$, then one can  take $\epsilon+(d+2)\eta$ smaller than $(1-1/p)(d-s_A)-1 $ to sum over $j$ to obtain the boundness of the spherical maximal operator $\mathcal{M}_{S^{d-1}(A)}$  in $L^p(\mathbb{R}^{d+1})$ in (\ref{2.24}).
 Next, we claim that  (\ref{kc4})    implies   (\ref{2.25}). Write the  measure  supported on the annulus $S^{d-1}_\delta$   with  
 $\delta=2^{-j}$ as
$
 \left(d\sigma*2^{dj}\psi (2^j\cdot )\right)\left(y\right) $ for $y\in \mathbb{R}^d$  where $*$ is the Euclidean convolution in $\mathbb{R}^d$. 
Take its Fourier transform  in the Euclidean space $\mathbb{R}^2$ and make a dyadic decomposition to
 $ \widehat{d\sigma}(\xi) \widehat{\psi}\left(  \frac{\xi}{2^j}\right)  $. Then we have the similar bound of  $\left\|\mathcal{M}_{j} \right\|_{L^2(\mathbb{R}^{d+1})\rightarrow L^2(\mathbb{R}^{d+1})} $  in (\ref{2.26}) with a smaller tail term  so that 
\begin{align*}
\|\mathcal{M}^{\delta}_{S^{d-1}(A)}\|_{L^2\rightarrow L^2} &\lesssim \sum_{\ell: 2^0\le 2^{\ell}\le 2^j} \| \mathcal{M}_\ell\|_{L^2\rightarrow L^2}\approx 2^{j(1/2-(d-1)/2)} 2^{\epsilon j} 2^{s_Aj/2} 
\end{align*}   
which is $(\delta^{-1})^{(1/2-(d-1)/2)+s_A/2+\epsilon}$ as  $\delta=2^{-j}$. So, we proved (\ref{2.25}).
\end{proof}

\subsection{Reduction to the case $\lambda\approx 2^j$ in (\ref{kc4})}\label{sec5}  
Let $A\in G_{d}(\mathbb{R})$.  
The change of variable $x\rightarrow A^{-1}(x)$ for   $\mathcal{T}_{j}^{\lambda}g$   in  (\ref{5rg}), yields\begin{align*}
\mathcal{T}_{j}^{\lambda}g(A^{-1}(x),t )& =\frac{ \lambda^{d/2}   }{\det(A)} \int_{\mathbb{R}^d}
e^{2\pi i\lambda \left(\langle A^{-1}(x), \xi \rangle+t| \xi+x|\right)}
 \chi\left(t\right)  \psi(x)  \chi\left(\frac{\lambda t|\xi+x|}{2^j}\right) \widehat{g
 }(  \xi)d\xi.
  \end{align*}
 We are allowed to replace $A^{-1}$ with $A$ in the above   and rewrite it as
\begin{align}\label{5ppa}
  \mathcal{T}_{j}^{\lambda}g(x,t )  = \lambda^{d/2}    \int_{\mathbb{R}^d}
e^{2\pi i\lambda \left(\langle Ax, \xi \rangle+t| \xi+x|\right)}
 \chi\left(t\right) \psi(x)  \chi\left(\frac{\lambda t|\xi+x|}{2^j}\right) \widehat{g
 }(  \xi)d\xi.
\end{align}

 \begin{proposition}[Extreme $\lambda$] \label{pr02}
  Let $A\in G_{d}(\mathbb{R})$ and  denote $ \mathcal{T}_{j}^{\lambda,t}g(x)  = \mathcal{T}_{j}^{\lambda}g(x,t ) $ for each fixed $t$ in (\ref{5ppa}). Then there is $C>0$ such that
\begin{align}
&\left\|\mathcal{T}_j^\lambda   \right\|_{L^2(\mathbb{R}^d)\rightarrow L^2(\mathbb{R}^{d}\times [1,2]) }\le\sup_{t} \|  \mathcal{T}_{j}^{\lambda,t}  \|_{L^2(\mathbb{R}^d)\rightarrow L^2(\mathbb{R}^d)}\le  C    \ \text{ if $\left|\frac{2^j}{\lambda}\right|\gg 1$, }
\label{546}\\
& \left\|\mathcal{T}_j^\lambda  \right\|_{L^2(\mathbb{R}^d) \rightarrow L^2(\mathbb{R}^{d}\times [1,2]) }\lesssim \left|\frac{2^j}{\lambda}\right|^{d/2} \le 1\  \  \text{ if $\left|\frac{2^j}{\lambda}\right|\ll  1$.}\label{545}
\end{align}
\end{proposition}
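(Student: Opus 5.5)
The proposal treats the two extremes of the ratio $2^j/\lambda$ separately, in both cases using only a $TT^*$ argument or Schur's test. In both regimes the factor $\chi(t)$ confines $t$ to $[1,2]$, so the first inequality in (\ref{546}) is immediate. Indeed,
$\|\mathcal{T}_j^\lambda g\|_{L^2(\mathbb{R}^d\times[1,2])}^2=\int_1^2\|\mathcal{T}_j^{\lambda,t}g\|_{L^2(\mathbb{R}^d)}^2dt\le \sup_t\|\mathcal{T}_j^{\lambda,t}\|^2\|g\|^2$.
So in each case it suffices to bound $\mathcal{T}_j^{\lambda,t}$ uniformly in $t\in[1,2]$.

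\emph{Case $|2^j/\lambda|\ll1$.} Here we do not exploit oscillation at all. The cutoff $\chi(\lambda t|\xi+x|/2^j)$ forces $|\xi+x|\approx 2^j/\lambda\ll1$. Given $x$ with $|x|\le1$ (from $\psi$), $\xi$ therefore ranges over a ball of radius $\approx 2^j/\lambda$ around $-x$.

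We apply Cauchy--Schwarz in $\xi$:
\[
|\mathcal{T}_j^{\lambda,t}g(x)|\le \lambda^{d/2}(2^j/\lambda)^{d/2}\Big(\int\chi\big(\tfrac{\lambda t|\xi+x|}{2^j}\big)|\widehat g(\xi)|^2d\xi\Big)^{1/2}.
\]
Squaring and integrating in $x$ uses the same support property again and yields another factor $(2^j/\lambda)^d$. This gives
$\|\mathcal{T}_j^{\lambda,t}g\|_2\lesssim \lambda^{d/2}(2^j/\lambda)^{d}\|g\|_2$.

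That is not yet the claimed bound $(2^j/\lambda)^{d/2}$, because the factor $\lambda^{d/2}$ is not compensated. To fix this I would rescale. Using the change $\xi\to\lambda\xi$ that produced the normalization $\widehat{g_\lambda}(\xi)=\lambda^{d/2}\widehat f(\lambda\xi,\lambda)$, the factor $\lambda^{d/2}$ is exactly the $L^2$-normalizing factor. With that scaling accounted for, Schur's test with the kernel $\lambda^{d/2}\cdot\mathbf 1_{|\xi+x|\lesssim 2^j/\lambda}$ in the correct variables gives the norm $(2^j/\lambda)^{d/2}$ up to the normalization.

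Concretely, I would write the operator as $g\mapsto \int K(x,\xi)\widehat g(\xi)d\xi$ with $|K|\le\lambda^{d/2}\mathbf 1_{|\xi+x|\lesssim 2^j/\lambda}\psi(x)$. I would then check Schur's test, where the row and column integrals are each $O((2^j/\lambda)^d)$, and combine this with the $\lambda$-normalization so that the result is (\ref{545}). Finally, $(2^j/\lambda)^{d/2}\le1$ is trivial in this regime.

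\emph{Case $|2^j/\lambda|\gg1$.} Now $|\xi+x|\approx 2^j/\lambda\gg1$. Since $|x|\le1$, this means $|\xi|\approx 2^j/\lambda$ and $\nabla_x$ of $t|\xi+x|$ is essentially $\xi/|\xi|$. The plan is a $TT^*$ argument for $\mathcal{T}_j^{\lambda,t}$ in the $x$ variable, i.e. estimating $\mathcal{T}^*\mathcal{T}$ with kernel
\[
\lambda^{d}\int e^{2\pi i\lambda(\langle Ax,\xi-\eta\rangle+t|\xi+x|-t|\eta+x|)}\psi(x)^2\chi(\cdots)\chi(\cdots)\,dx .
\]
The $x$-gradient of the phase is $\lambda\big(A^T(\xi-\eta)+t(\tfrac{\xi+x}{|\xi+x|}-\tfrac{\eta+x}{|\eta+x|})\big)$. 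The second term has size $\lesssim |\xi-\eta|\lambda/2^j$, which is small relative to the first since $2^j/\lambda\gg1$. Because $A$ is invertible, the gradient is $\gtrsim\lambda|\xi-\eta|$.

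Integrating by parts $N$ times in $x$ then gives the kernel bound $\lambda^d(1+\lambda|\xi-\eta|)^{-N}$. The higher derivatives of the amplitude and of the curved phase cost at most $\lambda/2^j\ll1$ per derivative, which is harmless. By Schur's test, $\|\mathcal{T}^*\mathcal{T}\|\lesssim\lambda^d\cdot\lambda^{-d}=O(1)$ uniformly in $t$, which proves (\ref{546}).

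The main obstacle I anticipate is the bookkeeping of the $\lambda^{d/2}$ normalization versus the measure $d\xi$, especially in the small-ratio case, so that the bounds come out exactly $O(1)$ and $(2^j/\lambda)^{d/2}$. A secondary concern is the derivative control of $|\xi+x|$ in the integration by parts, which needs $|\xi+x|\gtrsim1$; this is guaranteed by the cutoff in the large-ratio case.
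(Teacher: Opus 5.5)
Your proof of (\ref{546}) is fine. The $T^*T$ bound $|K(\xi,\eta)|\lesssim\lambda^d(1+\lambda|\xi-\eta|)^{-N}$ rests on $|A^T(\xi-\eta)|\gtrsim|\xi-\eta|$ dominating the cone term $O(\lambda|\xi-\eta|/2^j)$, and it is a hands-on version of the paper's rescaling plus H\"ormander with mixed Hessian $A+O(\lambda/2^j)$. One small correction: derivatives falling on $\psi(x)$ cost $O(1)$, not $\lambda/2^j$, which is still harmless.

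The argument for (\ref{545}) has a genuine gap. Any estimate that uses only $|K|$ is stuck at what your Cauchy--Schwarz/Schur computation gives, namely $\lambda^{d/2}(2^j/\lambda)^{d}=(2^j/\lambda)^{d/2}\,2^{jd/2}$. The proposed ``normalization'' fix does not exist, for two reasons:
\begin{itemize}
\item The factor $\lambda^{d/2}$ in (\ref{5ppa}) is what remains \emph{after} the normalization $\widehat{g_\lambda}(\xi)=\lambda^{d/2}\widehat f(\lambda\xi,\lambda)$ has already been used in (\ref{011}), so using it again is double counting.
\item The $L^2$ operator norm of $\mathcal{T}^\lambda_j$ is invariant under the simultaneous dilation $(x,\xi)\to(Dx,D\xi)$, because the Jacobian is exactly cancelled by the two $L^2$ normalizations.
\end{itemize}
So the missing factor $2^{-jd/2}$ must come from oscillation.

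It also cannot come from oscillation at a fixed $t$, so your reduction to $\sup_t\|\mathcal{T}^{\lambda,t}_j\|$ already loses in this regime. After the paper's dilation by $2^j/\lambda$, the phase is $2^j\phi$ with $\phi=\frac{2^j}{\lambda}\langle Ax,\xi\rangle+t|\xi+x|$ and $|u|=|\xi+x|\approx1$. The $d\times d$ mixed Hessian $\phi''_{x\xi}$ equals $\frac{t}{|u|}$ times the projection onto $u^\perp$ plus $O(2^j/\lambda)$, so it is nearly singular in the radial direction. For skew $A$, the wave-packet construction behind (\ref{k567}) (packets along a ray, with $2^j\approx\lambda^{2/3}$) shows that the fixed-$t$ norm exceeds $(2^j/\lambda)^{d/2}$ by a factor of roughly $2^{j/2}$.

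The missing idea is to keep $t$ as a variable:
\begin{itemize}
\item The extra row $\phi''_{t\xi}=u^T/|u|$ eliminates the rank-one term $-\frac{t}{|u|^3}uu^T$ by row reduction.
\item Hence the $(d+1)\times d$ matrix $\phi''_{(x,t)\xi}$ is row equivalent to the matrix with rows $\frac{2^j}{\lambda}A+\frac{t}{|u|}I$ and $u^T/|u|$.
\item This has rank $d$, since $\det(\frac{2^j}{\lambda}A+\frac{t}{|u|}I)\approx1$.
\item Lemma \ref{th51h} on $L^2(dx\,dt)$ then gives decay $2^{-jd/2}$, so the norm is $(2^j/\lambda)^{d}\lambda^{d/2}2^{-jd/2}=(2^j/\lambda)^{d/2}$.
\end{itemize}
In your $T^*T$ language, you must also integrate by parts in $t$, using $\partial_t\Phi=2\pi\lambda(|\xi+x|-|\eta+x|)$. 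This is what controls the radial direction, where the $x$-gradient of the phase gives essentially no decay.
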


\begin{proof}[Proof of (\ref{546})]
Using    $(x,\xi)\rightarrow \left(\left(\frac{2^j}{\lambda}\right)x,\left(\frac{2^j}{\lambda}\right) \xi\right)$ in (\ref{5ppa}), it suffices to treat:
\begin{align*}  
 \mathcal{T}_{j}^{\lambda}g( x,t )  =\left(\frac{2^j}{\lambda}\right)^{d}  \lambda^{d/2}  \int_{\mathbb{R}^d}
e^{2\pi i\lambda  \left[\left(\frac{2^j}{\lambda}\right)^2  \langle Ax, \xi \rangle+   \left(\frac{2^j}{\lambda}\right)t| \xi+x|  \right]  }
 \chi\left(t\right) \chi\left(  t|\xi+x|\right) \widehat{g
 }(  \xi)d\xi.
\end{align*}
Rewrite the above oscillatory part as
$$    \frac{2^{2j}}{\lambda}   \phi(x,t,\xi)\ \text{where}\ \phi(x,t,\xi)= \langle Ax, \xi \rangle+   \frac{\lambda}{2^{j}} t| \xi+x|. $$ 
 Fix $|t|\approx 1$. Then 
  the determinant of the $d\times d$ mixed hessian of   $\phi(x,t,\xi)$ is
\begin{align*}
\det( [\phi_{x_i\xi_j}''(x,t,\xi)])=\det(A)+O\left( \frac{\lambda}{2^j} \right)\approx \det(A)\gtrsim 1
\end{align*}
which follows from the multilinearity of $\det$, with   $ \frac{\lambda}{2^j} \ll 1$. This enables us to apply the   H\"{o}rmander theorem   to obtain that
 $$\|  (\mathcal{T}_{j}^{\lambda}g)(\cdot,t) \|_{L^2(\mathbb{R}^d)}\le C\left(\frac{2^j}{\lambda}\right)^{d}  \lambda^{d/2} \left( \frac{2^{2j}}{\lambda} \right)^{-d/2}  \|g\|_{L^2(\mathbb{R}^d)}=C\|g\|_{L^2(\mathbb{R}^d)} $$
with $C$  independent of $t$. Integrate LHS with respect to $dt$ to obtain (\ref{546}).
\end{proof} 
We shall utilize the H\"{o}rmander Theorem   in the following  form. 
 \begin{lemma}[H\"{o}rmander]\label{th51h} 
Let $\phi\in C^\infty$ and $\psi$ be compactly supported and let $$\mathcal{S}^\lambda f(x,t):=\int_{\mathbb{R}^d} e^{i\lambda \phi(x,t,\xi)} \psi(x,t,\xi)f(\xi)d\xi\ \text{where $(x,t)\in \mathbb{R}^d\times \mathbb{R}$}. $$
Denote by $B_i(x,t,\xi)$ be  $d\times d$ sub-matrices of the $(d+1)\times d$ mixed hessian   $[\phi]''_{((x,t),\xi)((x,t), \xi)}$.  Suppose that $\sum_{i=1}^{d+1}\det( B_i(x,t,\xi)|\ge c>0$.  Then 
\begin{align}\label{2010}
\|  \mathcal{S}^\lambda\|_{L^2(\mathbb{R}^d)\rightarrow L^2(\mathbb{R}^{d+1})}\le C\lambda^{-d/2}. 
\end{align}
 \end{lemma}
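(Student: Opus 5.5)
The plan is to reduce to the classical $d$-dimensional H\"ormander estimate by freezing one of the $d+1$ output variables, choosing which one according to which minor $B_i$ is nondegenerate. Write $z=(x,t)\in\mathbb{R}^{d+1}$.

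First, a partition of unity. Since $\psi$ is compactly supported and $\phi\in C^\infty$, the hypothesis $\sum_i|\det B_i|\ge c$ gives, at every point $(z_0,\xi_0)$ of the support, an index $i$ with $|\det B_i(z_0,\xi_0)|\ge c/(d+1)$. By continuity this persists, with constant $c/(2(d+1))$, on a ball of some radius $r>0$. The radius $r$ is uniform because $\phi''$ is uniformly continuous on a neighborhood of the compact support. Cover the support by finitely many such balls and split $\psi=\sum_m\psi_m$ with smooth $\psi_m$ subordinate to the cover. It then suffices to prove (\ref{2010}) for each piece $\mathcal{S}^\lambda_m$ and sum finitely many bounds.

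Second, freezing a variable. On a fixed piece, $B_i$ is the mixed hessian $\partial^2\phi/\partial w\,\partial\xi$, where $w\in\mathbb{R}^d$ consists of the $d$ coordinates of $z$ other than the $i$-th one, $s=z_i$. (If $B_i$ omits the $t$-row, then $w=x$ and $s=t$; otherwise $s$ is one of the $x_k$ and $w$ contains $t$.) For each fixed $s$, the map $f\mapsto\mathcal{S}^\lambda_m f(\cdot,s)$ is an oscillatory integral operator from $L^2(d\xi)$ to $L^2(dw)$ in $d$ variables. Its mixed hessian satisfies $|\det\partial^2_{w\xi}\phi|\ge c'$ on the support, and its amplitude and phase have $C^N$ bounds uniform in $s$. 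If I show
\[
\|\mathcal{S}^\lambda_m f(\cdot,s)\|_{L^2(dw)}\le C\lambda^{-d/2}\|f\|_{L^2}
\]
with $C$ independent of $s$, then squaring and integrating over the bounded $s$-range gives (\ref{2010}) for this piece.

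Third, the frozen $d$-dimensional estimate, by the standard $T^*T$ argument. The kernel of $T^*T$ is
\[
K(\xi,\eta)=\int e^{i\lambda(\phi(w,s,\xi)-\phi(w,s,\eta))}\psi_m(w,s,\xi)\overline{\psi_m(w,s,\eta)}\,dw .
\]
By Taylor's formula,
\[
\nabla_w\phi(w,s,\xi)-\nabla_w\phi(w,s,\eta)=\Bigl(\int_0^1 \partial^2_{w\xi}\phi(w,s,\eta+\tau(\xi-\eta))\,d\tau\Bigr)(\xi-\eta).
\]
The averaged matrix stays uniformly invertible, because the hessian is invertible and uniformly continuous on the small ball; this may require shrinking $r$ once more. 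Hence $|\nabla_w(\phi(\cdot,\xi)-\phi(\cdot,\eta))|\ge c''|\xi-\eta|$ on the support. Integrating by parts $N$ times in $w$ with the operator $L=\frac{\nabla_w\Phi\cdot\nabla_w}{i\lambda|\nabla_w\Phi|^2}$, where $\Phi=\phi(\cdot,\xi)-\phi(\cdot,\eta)$, gives
\[
|K(\xi,\eta)|\le C_N(1+\lambda|\xi-\eta|)^{-N}.
\]
The derivatives of $\Phi$ of order $\ge2$ in $w$ are $O(|\xi-\eta|)$, so each integration by parts gains a factor $(\lambda|\xi-\eta|)^{-1}$. Schur's test then gives $\|T^*T\|\lesssim\lambda^{-d}$, hence $\|T\|\lesssim\lambda^{-d/2}$.

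The main obstacle is the uniformity bookkeeping. The lower bound $|\nabla_w\Phi|\gtrsim|\xi-\eta|$ has to hold with constants independent of $s$ and of the piece, and this needs the Taylor-averaged hessian to remain invertible. I would handle it by choosing the partition radius small relative to the modulus of continuity of $\phi''$ and to $c$, so that every ball in the cover is convex in $\xi$ and the minor's determinant varies by less than half of its lower bound across the ball.
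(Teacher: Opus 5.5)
Your proof is correct. It is not really a different route from the paper's, because the paper gives no argument at all: it quotes H\"ormander's $L^2$ oscillatory integral theorem in this $(d+1)\times d$ form as a black box. What you do is unpack that citation into its two standard ingredients.

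First, you reduce the rank-$d$ condition on the $(d+1)\times d$ Hessian to the square case. You localize by a finite partition of unity to pieces where a fixed minor $B_i$ is nondegenerate, freeze the omitted coordinate $s$, and integrate the slice bound over the bounded $s$-range by Fubini. This also explains why the bound is exactly $\lambda^{-d/2}$, with no extra decay from the $(d+1)$-st variable.

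Second, you prove the square case by $T^*T$: non-stationary phase in $w$ using $|\nabla_w\Phi|\gtrsim|\xi-\eta|$, followed by Schur's test.

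What this gains over the bare citation is that it makes the dependence of $C$ visible. $C$ depends only on $c$, on finitely many derivatives of $\phi$ and $\psi$ on the support, and on the size of the support. That is precisely the uniformity the paper uses without comment. It applies the lemma to parameter families of phases such as $\langle Ax,\xi\rangle+\frac{\lambda}{2^j}t|\xi+x|$ and $\frac{2^j}{\lambda}\langle Ax,\xi\rangle+t|\xi+x|$, and asserts constants independent of $t$ and of $\lambda/2^j$.

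One point should be stated more precisely. Consider the Taylor-averaged matrix $\int_0^1\partial^2_{w\xi}\phi(w,s,\eta+\tau(\xi-\eta))\,d\tau$. For it to be invertible, you need the matrix $\partial^2_{w\xi}\phi$ itself to stay within $(2\|M_0^{-1}\|)^{-1}$ of its value $M_0$ at the centre of each ball. Here $\|M_0^{-1}\|\le\|M_0\|^{d-1}/|\det M_0|$ is uniformly bounded. It is not enough that the determinant varies by less than half its lower bound. Pointwise invertibility along the segment does not make the average invertible: rotations by angle $2\pi\tau$, $\tau\in[0,1]$, all have determinant $1$ but average to the zero matrix. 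Your earlier appeal to uniform continuity of the Hessian already gives the correct condition, so only the wording of your final sentence needs to change.
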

 \begin{proof}[Proof of (\ref{545})]
This is the case $    \frac{2^j}{\lambda}\ll 1$. For the same dilation,  we  work with
\begin{align}\label{445}
 \mathcal{T}_{j}^{\lambda}g( x,t )  =\left(\frac{2^j}{\lambda}\right)^{d}  \lambda^{d/2}  \chi\left(t\right)\psi(x)    \int_{\mathbb{R}^d}
e^{2\pi i 2^j  \phi(x,t,\xi)   }
 \chi\left(  t|\xi+x|\right) \widehat{g
 }(  \xi)d\xi
\end{align}
where 
$$ \phi(x,t,\xi)=\frac{2^j}{\lambda}  \langle Ax, \xi \rangle+  t| \xi+x|. $$
Let $u=(x+\xi)$ and $u_i=x_i+\xi_i$. Then   $(d+1)\times d$   mixed hessian matrix is
\begin{align*} 
  \left(\begin{matrix}
\phi_{x_1\xi_1}&\cdots&\phi_{x_1\xi_d} \\
\phi_{x_2\xi_1}&\cdots&\phi_{x_2\xi_d}\\
\vdots\\
\phi_{x_d\xi_1}&\cdots&\phi_{x_d\xi_d}\\
\phi_{t\xi_1}&\cdots&\phi_{t\xi_d}\\
 \end{matrix}  \right)=  \left(\left(\frac{2^j}{\lambda} A+\frac{t}{|u|}\right) I\begin{matrix}
 -\frac{tu_1}{|u|^2}\frac{u}{|u|}\\
 -\frac{tu_2}{|u|^2}\frac{u}{|u|}\\
 \qquad \vdots\\
 -\frac{tu_d}{|u|^2}\frac{u}{|u|}\\
\frac{u}{|u|}\qquad\qquad \\
 \end{matrix}  \right)\sim  \left(\begin{matrix}
\frac{2^j}{\lambda} A+\frac{t}{|u|}I\\
\frac{u}{|u|}\\
 \end{matrix}  \right).
\end{align*}
where $\det(\frac{2^j}{\lambda} A+\frac{t}{|u|}I)\approx 1$ due to $|\frac{2^j}{\lambda}|\ll 1$.  
This enables us to apply the  H\"{o}rmander theorem in (\ref{2010}) to obtain that
 $$\|  \mathcal{T}_{j}^{\lambda}  \|_{L^2(\mathbb{R}^d)\rightarrow L^2(\mathbb{R}^{d+1})}\le C\left(\frac{2^j}{\lambda}\right)^{d}  \lambda^{d/2}   2^{-jd/2}  =C\left(\frac{2^j}{\lambda}\right)^{d/2} . $$
 This yields (\ref{545}).
\end{proof}
We now have proved (\ref{546}) and (\ref{545}). To prove  Main Theorems, there remains   
     the case  $\lambda\approx 2^j$ of (\ref{kc4}) where $s_A=m_A-1$. So, we set $\mathcal{T}^{\lambda}_j $ with $\lambda\approx 2^j$ as
\begin{align*} 
 \mathcal{T}_j g(x,t):= 2^{dj/2}  \int_{\mathbb{R}^d}
e^{2\pi i 2^j\phi(x,t,\xi)}\psi(x,t,\xi)
 \widehat{g}(  \xi)d\xi
\end{align*}
where  
 $\phi(x,t,\xi):=\langle x, A(\xi)\rangle+t|\xi+x|$ and $\psi(x,t,\xi)=\chi\left(t\right)\psi\left(x\right)\chi\left(  t|\xi+x| \right)$. Then
it suffices to claim that under the hypotheses of Main Theorems,
\begin{align}\label{apl}
&\left\| \mathcal{T}_j\right\|_{L^2(\mathbb{R}^d)\rightarrow L^2(\mathbb{R}^d\times  [1,2])}\lesssim_{\epsilon}   2^{j(s_A/2+\epsilon)}\ \text{where $s_A=m_A-1$}
\end{align}
where we take $m_A:=1$ if $\Lambda(A)\cap \mathbb{R}=\emptyset$.

 \subsection{Proof of Main Theorem 1} \label{secc6}

  \begin{proposition}\label{th44}
 Suppose that $\Lambda(A)\cap \mathbb{R}=\emptyset$. Then  it holds that 
\begin{align} \label{apl5}
 \left\| \mathcal{T}_j  \right\|_{ L^2(\mathbb{R}^{d})\rightarrow L^2(\mathbb{R}^{d}\times \mathbb{R})} &\lesssim 1.
 \end{align}
Thus, we obtain the results  in (\ref{apl}) for $m_A=1$, which proves Main Theorem 1.
\end{proposition}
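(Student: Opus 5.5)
The plan is to apply H\"ormander's Lemma \ref{th51h} directly to $\mathcal{T}_j$ with large parameter $2^j$. The prefactor $2^{dj/2}$ exactly cancels the decay $(2^j)^{-d/2}$ given by (\ref{2010}), so (\ref{apl5}) follows once we verify the lower bound $\sum_i|\det B_i|\ge c>0$ on the support of $\psi(x,t,\xi)=\chi(t)\psi(x)\chi(t|\xi+x|)$. This support is compact: $|x|\le 1$, $t\approx 1$, $|\xi+x|\approx 1$.

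First I compute the $(d+1)\times d$ mixed Hessian of $\phi(x,t,\xi)=\langle x,A\xi\rangle+t|\xi+x|$, with $u=x+\xi$. This is the computation already carried out in (\ref{jy}) and in the proof of (\ref{545}), with $A$ replaced by $A^T$; by Remark \ref{ob11} this change is harmless. Its rows are
\[
\phi''_{x\xi}=A^T+\frac{t}{|u|}\Big(I-\frac{u u^T}{|u|^2}\Big),\qquad \phi''_{t\xi}=\frac{u^T}{|u|}.
\]
Adding multiples of the last row $u^T/|u|$ to the first $d$ rows kills the rank-one term $-\frac{t}{|u|}\frac{uu^T}{|u|^2}$. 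The $(d+1)\times d$ matrix is therefore row equivalent to $\binom{A^T+\frac{t}{|u|}I}{u^T/|u|}$. Row operations within the full matrix do not change its rank, and the quantity $\sum_i|\det B_i|$ is comparable to a continuous function that is positive exactly when the rank is $d$. It therefore suffices to show that this matrix has rank $d$ uniformly on the support.

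The key point is the hypothesis $\Lambda(A)\cap\mathbb{R}=\emptyset$. Since $s:=t/|u|$ is a positive real number, $\det(A^T+sI)=p_{A}(-s)\neq 0$. Hence the top $d\times d$ block $B_{top}$ alone is invertible, and
\[
|\det(A^T+sI)|\ge c>0\quad\text{for } s\in[c_1,c_2],
\]
by continuity and compactness, because $s$ ranges over a compact subinterval of $(0,\infty)$ on the support. To pass this bound back to the original Hessian, I expand $\det B_{top}$ of the original matrix via multilinearity (the matrix determinant lemma):
\[
\det\Big(M-\frac{t}{|u|^3}uu^T\Big)=\det M\Big(1-\frac{t}{|u|^3}u^TM^{-1}u\Big),\qquad M:=A^T+\frac{t}{|u|}I.
\]
This may vanish. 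So instead I use the row-equivalence: the combination $\det B_{top}+\frac{t}{|u|^2}\sum_k u_k(\pm\det B_{k})$ equals $\det M$. This gives $\sum_i|\det B_i|\gtrsim|\det M|\ge c$, since the coefficients are bounded on the support.

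The only real subtlety is this last bookkeeping step, since H\"ormander's condition is stated in terms of minors rather than rank. I expect it to be handled exactly as above, and the smoothness and compact support of the amplitude are automatic. Finally, the bound (\ref{apl5}) gives (\ref{apl}) with $s_A=0$, i.e.\ $m_A=1$. Combined with Proposition \ref{pr02} for the extreme ranges of $\lambda$, this yields (\ref{kc4}) with $s_A=0$. Proposition \ref{prop44} then gives boundedness for $p>\frac{d}{d-1}$ when $d>2$, which is the sufficiency part of Main Theorem \ref{main1}.
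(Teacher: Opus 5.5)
Your proposal is correct and follows the paper's proof. You compute the $(d+1)\times d$ mixed Hessian and row-reduce it to the matrix with top block $A+\frac{t}{|u|}I$ and last row $u/|u|$. You then use $\Lambda(A)\cap\mathbb{R}=\emptyset$ to get $|\det(A+\frac{t}{|u|}I)|\gtrsim 1$ on the compact support, and apply Lemma \ref{th51h} with parameter $2^j$ so that the prefactor $2^{dj/2}$ cancels. Your multilinearity identity, which writes $\det M$ as a bounded combination of the minors $B_i$, spells out the step the paper leaves implicit when it passes from row equivalence to H\"ormander's minor condition.
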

\begin{proof}[Proof of (\ref{apl5})]
Let $u=(x+\xi)$ and $u_i=x_i+\xi_i$. As we did above,   compute the   $(d+1)\times d$ mixed hessian matrix $[ \phi_{(x_1\cdots x_d t)(\xi_1\cdots \xi_d)}''(x,t,\xi)]$ of $\phi$  as
\begin{align}\label{300}
   \left(\begin{matrix}
\phi_{x_1\xi_1}&\cdots&\phi_{x_1\xi_d} \\
\phi_{x_2\xi_1}&\cdots&\phi_{x_2\xi_d}\\
\vdots\\
\phi_{x_d\xi_1}&\cdots&\phi_{x_d\xi_d}\\
\phi_{t\xi_1}&\cdots&\phi_{t\xi_d}\\
 \end{matrix}  \right)=  \left(\left(A+\frac{t}{|u|}\right) I\begin{matrix}
 -\frac{tu_1}{|u|^2}\frac{u}{|u|}\\
 -\frac{tu_2}{|u|^2}\frac{u}{|u|}\\
 \qquad \vdots\\
 -\frac{tu_d}{|u|^2}\frac{u}{|u|}\\
\frac{u}{|u|}\qquad\qquad \\
 \end{matrix}  \right)\sim  \left(\begin{matrix}
A+\frac{t}{|u|}I\\
\frac{u}{|u|}\\
 \end{matrix}  \right).
\end{align} 
Since $A$ has no real eigenvalue, we have for all $t,u $ with $|t|\approx |u|\approx 1$,
 $$|\det (A+\frac{t}{|u|}I)|\gtrsim 1\ \text{in (\ref{300})}.$$
This with (\ref{2010})  yields the result.
\end{proof}

\section{Proof of Main Theorem \ref{main2}} 
 Let $d\ge 2$ and $n(\mu)=1$, that is,  $m_a(\mu)=m_g(\mu)$ for every $\mu\in \Lambda(A)\cap \mathbb{R}$. Recall $m_A$ the  maximal   multiplicity  in (\ref{m11}).  Then we shall claim that
\begin{align} \label{51}
 \left\| \mathcal{T}_j  \right\|_{ L^2(\mathbb{R}^{d})\rightarrow L^2(\mathbb{R}^{d}\times \mathbb{R})} &\lesssim_{\epsilon}  2^{(m_A-1)j/2}.
 \end{align}
This yields (\ref{apl}) for $s_A=m_A-1$, giving a desired   bound  of  Main Theorem \ref{main2}.
\subsection{Away from Eigenvalues}
  Take  $\rho$ satisfying
 \begin{align}\label{cx21}
   0<  \rho&\le 2^{-10d}\min\left\{1, |\mu_{i}-\mu_{j}|: \mu_{i},\mu_{j}\in \Lambda(A)\cap\mathbb{R}  \right\}.     
\end{align}  
Then  the supports of the functions  $s\rightarrow\psi\left(\frac{\mu +s}{\rho}\right) $ for $\mu \in  \Lambda(A)\cap \mathbb{R}$ are disjoint.   Split $ \mathcal{T}_j =\mathcal{T}_{j,\rm{good}}+\sum_{\mu \in\Lambda(A)\cap \mathbb{R}} \mathcal{T}_{j,\mu}$ where $ \mathcal{T}_{j,\rm{good}} g(x,t)$ is defined as
\begin{align*} 
& 2^{dj/2}   \int 
e^{ i 2^j\phi(x,t,\xi)} \psi\left(x,t,\xi\right)\left[1-\sum_{\mu \in\Lambda(A)\cap \mathbb{R}}\psi\left(\left( \mu  +  \frac{t}{|\xi+x|}\right)\frac{1}{\rho}\right) \right] \widehat{g}(  \xi)d\xi\ \text{and}\\
& \mathcal{T}_{j,\mu} g(x,t)= 2^{dj/2}   \int 
e^{ i 2^j\phi(x,t,\xi)} \psi\left(x,t,\xi\right)
 \psi\left(\left( \mu  +  \frac{t}{|\xi+x|}\right)\frac{1}{\rho}\right)\widehat{g}(  \xi)d\xi.
\end{align*}
From the support condition of $\mathcal{T}_{j,\rm{good}}$, one can observe that  the mixed hessian of $\phi$ is of rank $d$ in (\ref{300}). Thus one can apply (\ref{2010}) to obtain that
\begin{align*} 
 \left\| \mathcal{T}_{j,\rm{good}}  \right\|_{ L^2(\mathbb{R}^{d})\rightarrow L^2(\mathbb{R}^{d}\times \mathbb{R})} &\lesssim 1.
 \end{align*}
Hence to prove (\ref{51}), there remains to prove that
  for   $\mu\in\Lambda(A)\cap\mathbb{R}$,
\begin{align}\label{cxx1}
 \left\| \mathcal{T}_{j,\mu}  \right\|_{ L^2(\mathbb{R}^{d})\rightarrow L^2(\mathbb{R}^{d}\times \mathbb{R})} &\lesssim  2^{j\epsilon}2^{(m_g(\mu)-1)j/2}\ \text{if $n(\mu)=1$}.
\end{align}

 \subsection{Kernel of  $T^*T$}
To show (\ref{cxx1}),
 write the  integral kernel   of $[\mathcal{T}_{j,\mu}]^*\mathcal{T}_{j,\mu}$    as
\begin{align*} 
K(\xi,\eta) &=  2^{jd} \int_{\mathbb{R}^{d+1}} e^{i 2^j \Phi(x,t,\xi,\eta) }\Psi_\mu(x,t,\xi,\eta) dxdt
 \end{align*}
with    $\Phi(x,t,\xi,\eta)=\langle A(\xi-\eta), x\rangle +t\left(|\xi+x|-|\eta+x|\right)$ and its amplitude $\Psi_\mu$ is
   $$ \chi(t)\psi\left(x\right)  \chi\left( |\xi+x| \right) \chi\left( |\eta+x| \right)  \psi\left(\left( \mu  +  \frac{t}{|\xi+x|}\right)\frac{1}{\rho}\right) 
   \psi\left(\left( \mu  +  \frac{t}{|\eta+x|}\right)\frac{1}{\rho}\right).$$
Note that   $|\nabla_{x,t}\Phi_\mu|\le C$ with $C$ depending only $A$, which is harmless. 
 Split $K= K_{1}+  K_{2}+K^{\mu}_{\rm{main}} $   where the small error terms are
\begin{align*}
& K_{1}(\xi,\eta)= 2^{jd}\int_{\mathbb{R}^{d+1} } e^{i 2^j
\Phi(x,t,\xi,\eta)}
 \Psi_\mu(x,t,\xi,\eta)\psi^c\left(\frac{ |\xi+x| -|\eta+x|
  }{2^{(\epsilon-1)j} }\right)  dxdt    \\
&   K_{2}(\xi,\eta)= 2^{jd}\int_{\mathbb{R}^{d+1} } e^{ i 2^j
\Phi(x,t,\xi,\eta)}
 \Psi_\mu(x,t,\xi,\eta)\psi\left(\frac{ |\xi+x| -|\eta+x|
  }{2^{(\epsilon-1)j} }\right)\psi\left( \frac{|\xi-\eta|}{2^{(3\epsilon-1)j}} \right)   dxdt
\end{align*}
with $\psi^c=1-\psi$, and the main term $K^{\mu}_{\rm{main}}(\xi,\eta)$ is given by
\begin{align}
 2^{jd}\int  e^{i 2^j
\Phi(x,t,\xi,\eta)} \Psi_\mu(x,t,\xi,\eta) \psi\left(\frac{ |\xi+x| -|\eta+x|
  }{2^{(\epsilon-1)j} }\right)  \psi^c\left( \frac{|\xi-\eta|}{2^{(3\epsilon-1)j}}   \right)dxdt.  \label{pc98}
 \end{align}
Using  $|\partial_t\Phi|=| |\xi+x|-|\eta+x||\gtrsim 2^{(\epsilon-1)j} $ for the integration by parts  along $dt$,
 \begin{align*}
  \sup_{\eta}\int_{\mathbb{R}^{d}} \left|K_{1}(\xi,\eta) \right|d\xi     &\lesssim  2^{-Nj}.   
  \end{align*}
By measuring the small frequency region $d\xi   = O(2^{(3\epsilon-1)d j} )$,
\begin{align*}
\sup_{\eta}\int \left|K_{2}(\xi,\eta)\right|d\xi&\lesssim  2^{j3d\epsilon}. 
\end{align*}
There remains to show  under the condition
 $n(\mu)=1$ for  $\mu\in \Lambda(A)\cap \mathbb{R}$:
\begin{align}
\sup_{\eta}\int |K^\mu_{\rm{main}}(\xi,\eta)|d\xi  \lesssim   2^{(m_g(\mu)-1)j}   2^{c\epsilon j} \ \text{for $c>0$ independent of $j$.} \label{cx1}
\end{align}
Here one can switch the roles of $\xi$ and $\eta$ when applying Schur's test.
\subsection{Two Preliminary Lemmas}
The following result is a key lemma, although it is elementary in linear algebra.

\begin{lemma}\label{lem3.3} 
Let $A\in G_{d}(\mathbb{R})$ and   $\mu\in \Lambda(A)\cap  \mathbb{R}$.  Then   for every $\xi\in\mathbb{R}^d$,
\begin{align}\label{pv61}
 &  |(A-\mu I)\xi|\approx|\mathcal{P}_{\mathcal{E}^{\perp}(\mu)}(\xi) |
\end{align}
where  $\mathcal{P}_S$  is a projection   to a subspace $S\subset \mathbb{R}^d$.  If  $n(\mu)=1$, then it holds that
\begin{align}\label{5252}
  \mathbb{R}^d=\mathcal{C}(A-\mu I) \oplus \mathcal{E}(\mu),
\end{align}
so that  $(A-\mu I)\xi$ in (\ref{pv61}) is contained in $\mathcal{E}^{\perp}(\mu)$ and
\begin{align}
&  (A-\mu I)\xi=  \langle (A-\mu I)\xi, v\rangle v\ \text{where}\ v=\frac{\mathcal{P}_{\mathcal{E}^{\perp}(\mu)}((A-\mu I)\xi)}{|\mathcal{P}_{\mathcal{E}^{\perp}(\mu)}((A-\mu I)\xi)|}. \label{59}
\end{align}
  \end{lemma}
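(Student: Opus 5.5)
The plan is to set $B:=A-\mu I$ and treat the three assertions in order, all by finite-dimensional linear algebra. For (\ref{pv61}), note that $\mathcal{N}(B)=\mathcal{E}(\mu)$. So for every $\xi$ we can write $\xi=\mathcal{P}_{\mathcal{E}(\mu)}\xi+\mathcal{P}_{\mathcal{E}^{\perp}(\mu)}\xi$, and hence $B\xi=B\,\mathcal{P}_{\mathcal{E}^{\perp}(\mu)}\xi$. The restriction $B|_{\mathcal{E}^{\perp}(\mu)}$ is injective, because its kernel is $\mathcal{E}(\mu)\cap\mathcal{E}^{\perp}(\mu)=\{0\}$. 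The continuous function $w\mapsto|Bw|$ is therefore strictly positive on the compact unit sphere of $\mathcal{E}^{\perp}(\mu)$, so it is bounded above and below by constants $0<c\le C$ depending only on $A$ and $\mu$. Homogeneity then gives $c|\mathcal{P}_{\mathcal{E}^{\perp}(\mu)}\xi|\le|B\xi|\le C|\mathcal{P}_{\mathcal{E}^{\perp}(\mu)}\xi|$, which is (\ref{pv61}). This part does not need $n(\mu)=1$.

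For (\ref{5252}) I will use $n(\mu)=1$, which says exactly that $\mathcal{N}(B)=\mathcal{N}(B^2)$. Take $z\in\mathcal{C}(B)\cap\mathcal{E}(\mu)$ and write $z=By$. Then $B^2y=Bz=0$, so $y\in\mathcal{N}(B^2)=\mathcal{N}(B)$, and therefore $z=By=0$. The intersection is trivial. Rank--nullity gives $\dim\mathcal{C}(B)+\dim\mathcal{E}(\mu)=d$, and together these yield $\mathbb{R}^d=\mathcal{C}(B)\oplus\mathcal{E}(\mu)$. Consequently $B$ maps $\mathcal{C}(B)$ bijectively onto itself, which is the invariance that the later kernel estimates use.

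The step I expect to be delicate is the remaining clause, which says that $B\xi\in\mathcal{E}^{\perp}(\mu)$ and gives the one-dimensional representation (\ref{59}). The sum in (\ref{5252}) is direct but in general not orthogonal, so for non-normal $A$ one only has $\mathcal{C}(B)\subset\mathbb{R}^d$ complementary to $\mathcal{E}(\mu)$, not equal to $\mathcal{E}^{\perp}(\mu)$.

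I would handle this by a preliminary linear change of variables $Q\in G_d(\mathbb{R})$ that maps $\mathcal{C}(B)$ onto $\mathcal{E}(\mu)^{\perp}$ while fixing $\mathcal{E}(\mu)$. Such a $Q$ exists precisely because of (\ref{5252}). The oscillatory operators $\mathcal{T}_{j,\mu}$ are invariant up to harmless constants under such changes of variables, as in the passage from $A^{-1}$ to $A$ in (\ref{5ppa}). Equivalently, one can take the inner product for which the decomposition (\ref{5252}) is orthogonal; all norms are comparable, so (\ref{pv61}) survives with new constants.

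In these adapted coordinates $B\xi\in\mathcal{C}(B)=\mathcal{E}^{\perp}(\mu)$, so $\mathcal{P}_{\mathcal{E}^{\perp}(\mu)}(B\xi)=B\xi$. Then $v=B\xi/|B\xi|$ is a unit vector, and $\langle B\xi,v\rangle v=|B\xi|\,v=B\xi$, which gives (\ref{59}) when $B\xi\neq0$. If $B\xi=0$ the identity is trivial under any convention for $v$.
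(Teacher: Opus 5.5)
Your proofs of (\ref{pv61}) and (\ref{5252}) are the same as the paper's. $A-\mu I$ maps $\mathcal{E}^{\perp}(\mu)$ isomorphically onto $\mathcal{C}(A-\mu I)$, and $n(\mu)=1$ plus rank--nullity gives $\mathcal{C}(A-\mu I)\cap\mathcal{E}(\mu)=\{\mathbf{0}\}$ and the direct sum. Your suspicion about the last clause is also justified, and it exposes a genuine error in the paper's proof. The paper passes from (\ref{5252}) to ``thus $(A-\mu I)\xi$ is contained in $\mathcal{E}^{\perp}(\mu)$'' as though the sum were orthogonal. For $(A-\mu I)\xi\neq\mathbf{0}$, (\ref{59}) is equivalent to $(A-\mu I)\xi\in\mathcal{E}^{\perp}(\mu)$. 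So the clause is false whenever $\mathcal{C}(A-\mu I)$ is not orthogonal to $\mathcal{E}(\mu)$. For example, take $A=\begin{pmatrix}1&1\\0&2\end{pmatrix}$ and $\mu=1$: then $n(1)=1$ and $\mathcal{E}(1)=\mathrm{span}(\mathbf{e}_1)$. But $(A-I)\mathbf{e}_2=(1,1)^T$, so $v=\mathbf{e}_2$ and the right side of (\ref{59}) is $\mathbf{e}_2\neq(1,1)^T$. The statement as written cannot be proved.

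Your repair does not prove it either; it proves a different identity. The lemma concerns the Euclidean projection $\mathcal{P}_{\mathcal{E}^{\perp}(\mu)}$ for the given $A$, and declaring new coordinates or a new inner product changes that object. The claim that the change is harmless for $\mathcal{T}_{j,\mu}$ also fails, and (\ref{5ppa}) is not a precedent, since there only $x$ was substituted to move a matrix into the bilinear term. To keep the term $t|\xi+x|$ you must substitute $x\mapsto Qx$ and $\xi\mapsto Q\xi$ together. This acts on $A-\mu I$ by congruence, $Q^T(A-\mu I)Q$, not by the similarity your adapted coordinates need, and it turns $|\xi+x|$ into $|Q(\xi+x)|$. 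The new kernel is $Q^{-1}\mathcal{E}(\mu)$ and the new range is $Q^T\mathcal{C}(A-\mu I)$. Since $\langle Q^Tc,Q^{-1}e\rangle=\langle c,e\rangle$, the range-orthogonal-to-kernel property asserted in the lemma is never created this way.

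No coordinate change is needed. The later argument in (\ref{740}) and (\ref{pv53}) only uses the comparability
\[
\langle (A-\mu I)\xi,v\rangle=\big|\mathcal{P}_{\mathcal{E}^{\perp}(\mu)}(A-\mu I)\xi\big|\approx|(A-\mu I)\xi|\approx|\mathcal{P}_{\mathcal{E}^{\perp}(\mu)}(\xi)|,
\]
with $v$ as in (\ref{59}), so that $v\in\mathcal{E}^{\perp}(\mu)$.
\begin{itemize}
\item The equality holds because $\mathcal{P}_{\mathcal{E}^{\perp}(\mu)}$ is self-adjoint and idempotent.
\item The first $\approx$ holds because $\mathcal{P}_{\mathcal{E}^{\perp}(\mu)}$ restricted to $\mathcal{C}(A-\mu I)$ has kernel $\mathcal{C}(A-\mu I)\cap\mathcal{E}(\mu)=\{\mathbf{0}\}$. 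This is exactly where $n(\mu)=1$ enters, and your compactness argument supplies the constants.
\item The second $\approx$ is (\ref{pv61}).
\end{itemize}
Then $v\perp\mathcal{E}(\mu)$ still removes $\mathcal{P}_{\mathcal{E}(\mu)}(\xi-\eta)$ in (\ref{pv53}). This requires $\rho$ in (\ref{cx21}) to be taken small relative to these comparability constants as well, not only relative to the eigenvalue gaps.
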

\begin{proof}[Proof of (\ref{pv61})]
It holds since $A-\mu I: \mathcal{E}^{\perp}(\mu)\rightarrow \mathcal{C}(A-\mu I)$ is an isomorphism.  
 \end{proof}

 \begin{proof}[Proof of (\ref{5252}) and (\ref{59})]
The assumption $n(\mu)=1$ implies    that   there is no  $v$ satisfying (i) $(A-\mu I)^2 v=0$   and (ii) $(A-\mu I) v\ne 0$ together. This yields $  \mathcal{E}(\mu) \cap \mathcal{C} (A-\mu I)=\{{\bf 0}\}$ and $\mathbb{R}^d=\mathcal{C}(A-\mu I) \oplus \mathcal{E}(\mu) $ by rank-nullity theorem, showing (\ref{5252}). Thus $(A-\mu I)\xi$ in (\ref{pv61}) is contained in $\mathcal{E}^{\perp}(\mu)$. So one can express $(A-\mu I)\xi=\langle (A-\mu I)\xi, v\rangle v$ for the unit vector $v$ in $\mathcal{E}^{\perp}(\mu)$ in (\ref{59}).
\end{proof} 

The next lemma is also elementary due to the Fubini Theorem. 
 \begin{lemma}
 Let  $E$ be a subspace of $\mathbb{R}^d$. Denote the projections of $\xi\in\mathbb{R}^d$ to $E$ and $E^{\perp}$ by $\xi_a=  \mathcal{P}_E(\xi)$ and $\xi_b=\mathcal{P}_{E^{\perp}}(\xi)$ respectively. Then  for $F\in L^1(\mathbb{R}^d)$, 
\begin{align}\label{215a}
\int_{\mathbb{R}^d} F(\xi)d\xi=\int_{\xi_{b }\in E^{\perp}} \left[\int_{\xi_a \in E} F(\xi_a+\xi_{b})d\xi_a\right]d\xi_{b}
\end{align}
where $d\xi_a$ and $d\xi_{b}$ are the  measures supported on  $E$  and $E^{\perp}$ respectively.
\end{lemma}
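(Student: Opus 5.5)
The plan is to derive (\ref{215a}) directly from the Fubini--Tonelli theorem after an orthogonal change of coordinates adapted to the splitting $\mathbb{R}^d=E\oplus E^{\perp}$. Let $k=\dim E$. First I fix an orthonormal basis $\{e_1,\dots,e_k\}$ of $E$ and extend it by an orthonormal basis $\{e_{k+1},\dots,e_d\}$ of $E^{\perp}$. Then I define the linear map $Q:\mathbb{R}^k\times\mathbb{R}^{d-k}\rightarrow\mathbb{R}^d$ by
\begin{align*}
Q(s,r)=\sum_{i=1}^k s_ie_i+\sum_{i=1}^{d-k} r_ie_{k+i}.
\end{align*}
This map is orthogonal, so $|\det Q|=1$. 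Moreover $\mathcal{P}_E(Q(s,r))=\sum s_ie_i$ and $\mathcal{P}_{E^{\perp}}(Q(s,r))=\sum r_ie_{k+i}$. The measures $d\xi_a$ and $d\xi_b$ are the $k$- and $(d-k)$-dimensional Lebesgue (Hausdorff) measures on $E$ and $E^{\perp}$. They are exactly the push-forwards of $ds$ and $dr$ under the isometries $s\mapsto\sum s_ie_i$ and $r\mapsto \sum r_ie_{k+i}$, and this identification does not depend on which orthonormal bases are chosen.

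Next I apply the linear change of variables $\xi=Q(s,r)$, which gives $\int_{\mathbb{R}^d}F(\xi)d\xi=\int_{\mathbb{R}^d}F(Q(s,r))\,ds\,dr$. Since $F\in L^1$, the function $F\circ Q$ is integrable on $\mathbb{R}^k\times\mathbb{R}^{d-k}$. Fubini's theorem then lets me write the integral as the iterated integral $\int_{\mathbb{R}^{d-k}}\big[\int_{\mathbb{R}^k}F(Q(s,r))ds\big]dr$, where the inner integral exists for a.e. $r$. Rewriting the inner integral through $\xi_a=\sum s_ie_i$ and the outer one through $\xi_b=\sum r_ie_{k+i}$ yields (\ref{215a}).

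I expect no real obstacle here. The only points needing care are conventions. One is that the measures on $E$ and $E^{\perp}$ are normalized as the induced Lebesgue measures, which is what makes the Jacobian equal to $1$. The other is the degenerate cases $E=\{0\}$ or $E=\mathbb{R}^d$, where one of the two integrals is interpreted as evaluation at $0$ (a Dirac measure) and the statement is trivial. In the later application, where $E=\mathcal{E}(\mu)$ and $F=|K^\mu_{\rm{main}}(\cdot,\eta)|$, integrability holds because the amplitude $\Psi_\mu$ is compactly supported.
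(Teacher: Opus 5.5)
Your proposal is correct and follows essentially the same route as the paper: choose an orthonormal basis adapted to $\mathbb{R}^d=E\oplus E^{\perp}$, make the orthogonal change of variables $\xi=Q\zeta$ (Jacobian $1$), apply Fubini, and identify $Q\zeta$ with $\xi_a+\xi_b$. Your additional remarks on the normalization of $d\xi_a$, $d\xi_b$ as induced Lebesgue measures and on the degenerate cases $E=\{0\}$, $E=\mathbb{R}^d$ are harmless refinements that the paper leaves implicit.
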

\begin{proof}[Proof of (\ref{215a})]
Let $\{v_1,\cdots,v_k,v_{k+1},\cdots,v_d\}$ be an orthonormal basis of $\mathbb{R}^d$ such that   $E=\text{span}(v_1,\cdots,v_k )$ and $E^{\perp}=\text{span}^{\perp}(v_{k+1},\cdots,v_d).$
Let $Q=(v_1|\cdots|v_d)$. Take $\xi=Q\zeta$. Then by Fubini,
\begin{align*}
\int F(\xi)d\xi  
&=\int_{  (\zeta_{k+1},\cdots,\zeta_d)\in\mathbb{R}^{d-k}}\left[\int_{  (\zeta_1,\cdots,\zeta_k)\in\mathbb{R}^k} F(Q\zeta)d\zeta_1\cdots d\zeta_k\right] d\zeta_{k+1}\cdots d\zeta_d 
\end{align*}
for  $Q\zeta=(\zeta_1v_1+\cdots+\zeta_kv_k)+(\zeta_{k+1}v_{k+1}+\cdots+\zeta_dv_d)=  \xi_a+\xi_b$, leading (\ref{215a}).
\end{proof}

\subsection{Proof of (\ref{cx1})} For $B(\eta):= \{\xi:  |\mathcal{P}_{\mathcal{E}^{\perp}(\mu)}(\xi-\eta)|\le   \frac{1}{\rho} 2^{(2\epsilon-1)j} \}$,  we first claim 
\begin{align}\label{081}
\int_{B(\eta)}\left| K^{\mu}_{\rm{main}}(\xi,\eta) \right|d\xi\lesssim 2^{c\epsilon j}2^{j(m_g(\mu)-1)})\ \text{for $c>0$ depending only on $d$} .
\end{align}
\begin{proof}[Proof of (\ref{081})]
From the support condition $|\xi-\eta|\gg ||\xi+x|-|\eta+x|| $ in (\ref{pc98}),
\begin{align*} 
&\nabla_x(|\xi+x|-|\eta+x|)= \frac{ (\xi-\eta) }{|\xi+x|} +(\eta+x)  \left( \frac{1}{|\xi+x|} - \frac{1}{|\eta+x|} \right)= |\xi-\eta| +O(2^{(\epsilon -1)j}).
\end{align*}
This enables us to apply  the sublevel-set estimates with respect to  $dx$ to have
\begin{align}\label{813}
\left| K^{\mu}_{\rm{main}}(\xi,\eta) \right| &\le 2^{jd}\int \left|\psi\left(\frac{ |\xi+x| -|\eta+x|
  }{2^{(\epsilon-1)j} }\right) \psi^c\left( \frac{|\xi-\eta|}{2^{(3\epsilon-1)j}}   \right)\right|dx\psi(\xi)\psi(\eta)  \\
& \lesssim\frac{ 2^{jd}\psi(\xi)\psi(\eta) }{2^{(1-\epsilon)j} |\xi-\eta| +1 } \nonumber
\end{align}
where the the support condition of $\xi$ and $\eta$ follows from $|x|,|x+\xi|,|x+\eta|\le 2$.
To show (\ref{081}), use the change of variable $\xi\rightarrow \xi+\eta$  in  (\ref{813}) to have
\begin{align*} 
\int \left| \psi\left(\frac{ \mathcal{P}_{\mathcal{E}^{\perp}(\mu)}(\xi)  }{  \frac{1}{\rho} 2^{(2\epsilon-1)j}    }\right)K^{\mu}_{\rm{main}}(\xi+\eta,\eta) \right|d\xi  \le \int \left| \psi\left(\frac{ \mathcal{P}_{\mathcal{E}^{\perp}(\mu)}(\xi)  }{  \frac{1}{\rho} 2^{(2\epsilon-1)j}    }\right)\frac{ 2^{jd}  \psi(\xi+\eta)\psi(\eta) }{ 2^{(1- \epsilon)j} |\xi| +1 } \right|d\xi.
 \end{align*}
 By using (\ref{215a}), we majorize the RHS above by
  \begin{align*}
&\int_{\xi_b\in \mathcal{E}^{\perp}(\mu)}   \psi\left(\frac{\xi_b  }{  \frac{2^{j(2\epsilon-1)}}{\rho}     }\right) \left[ \sup_{\xi_b\in \mathcal{E}^{\perp}(\mu)}\int_{\xi_a\in \mathcal{E}(\mu)}\frac{ 2^{jd} \psi(\xi_a+\xi_b+\eta)\psi(\eta)}{2^{j(1- \epsilon)} |\xi_a+\xi_b| +1 } d\xi_{a} \right] d\xi_b\nonumber\\
&\quad \lesssim \left( 2^{j(2\epsilon-1)} /\rho\right)^{d-m_g(\mu)}   [2^{jd }2^{j\epsilon}2^{j(\epsilon-1)}]\lesssim 2^{jc\epsilon }2^{j(m_g(\mu)-1)} 
  \end{align*} 
to show (\ref{081}). \end{proof}
 Hence,  to prove    (\ref{cx1}), there remains to show that
\begin{proposition}
Let $\mu\in \Lambda(A)\cap \mathbb{R}$ and $m_a(\mu)=m_g(\mu)$.   Then  we have
\begin{align}\label{814}
&\int_{B(\eta)^c} \left| K^{\mu}_{\rm{main}}(\xi,\eta) \right|d\xi \lesssim  1 \ \text{if} \  B(\eta)^c=\left\{\xi:   \frac{2^{j(2\epsilon-1)}}{\rho}<|\mathcal{P}_{\mathcal{E}^{\perp}(\mu)}(\xi-\eta)|   \right\}. \end{align} 
\end{proposition}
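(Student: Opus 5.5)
The plan is to run a nonstationary-phase argument in $x$ on the region $B(\eta)^c$. The key input is Lemma \ref{lem3.3}, which converts the $x$-gradient of $\Phi$ into the size of $\mathcal{P}_{\mathcal{E}^{\perp}(\mu)}(\xi-\eta)$, and this size is bounded below on $B(\eta)^c$. I have not verified that the exponents close, and I flag below exactly where this could fail.

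\emph{Step 1: the phase gradient.} Write $w=\xi-\eta$ and $s=\mu+t/|\xi+x|$, so that $|s|\le\rho$ on the support of $\Psi_\mu$. As in the computation before (\ref{813}),
$$\nabla_x\Phi=A w+\frac{t\,w}{|\xi+x|}+O(2^{(\epsilon-1)j})=(A-\mu I)w+s\,w+O(2^{(\epsilon-1)j}).$$
By (\ref{59}) the vector $(A-\mu I)w$ lies in $\mathcal{E}^{\perp}(\mu)$, and by (\ref{pv61}) its norm is $\approx|\mathcal{P}_{\mathcal{E}^{\perp}(\mu)}w|=:|w_b|$. Let $e$ be the unit vector in the direction of $\mathcal{P}_{\mathcal{E}^{\perp}(\mu)}\nabla_x\Phi$. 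Since $|s\,w_b|\le\rho|w_b|$, $\rho$ satisfies (\ref{cx21}), and $|w_b|>2^{(2\epsilon-1)j}/\rho\gg 2^{(\epsilon-1)j}$ on $B(\eta)^c$, we get
$$\langle e,\nabla_x\Phi\rangle\gtrsim |w_b|.$$

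\emph{Step 2: directional integration by parts.} Integrate by parts $N$ times along the direction $e$, not along the full gradient. This choice is what keeps the amplitude harmless. The derivative of the cutoff $\psi\big((|\xi+x|-|\eta+x|)/2^{(\epsilon-1)j}\big)$ along $e$ is
$$O\big(\langle e,w\rangle\, 2^{(1-\epsilon)j}\big)=O\big(|w_b|\,2^{(1-\epsilon)j}\big),$$
because $e\in\mathcal{E}^{\perp}(\mu)$. Each integration by parts gains $(2^j|w_b|)^{-1}$, so each step nets at least $2^{-\epsilon j}$, or a full power $(2^{j}|w_b|)^{-1}$ when the derivative falls on the $t/|\xi+x|$ cutoff or on $\chi(|\xi+x|)$. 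Combining this with the sublevel-set bound for the $x$-measure used in (\ref{813}) gives
$$|K^\mu_{\rm main}(\xi,\eta)|\lesssim \frac{2^{jd}\,\psi(\xi)\psi(\eta)}{1+2^{(1-\epsilon)j}|w|}\,\big(2^{\epsilon j}\,2^{j}|w_b|\big)^{-N}.$$

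\emph{Step 3: integrating in $\xi$.} Apply the Fubini splitting (\ref{215a}) with $E=\mathcal{E}(\mu)$. The $w_b$-integral over $|w_b|>2^{(2\epsilon-1)j}/\rho$ against $(2^{j}|w_b|)^{-N}$ contributes $2^{-j(d-m_g(\mu))}$, up to $2^{O(\epsilon) j}$. The $w_a$-integral of $(1+2^{(1-\epsilon)j}|w|)^{-1}$ over a bounded region contributes about $2^{-j}$ times a harmless factor. Counting powers, this yields $2^{jd}\cdot2^{-j(d-m_g)}\cdot 2^{-j}=2^{j(m_g-1)}$, which is only the bound (\ref{081}) again, not $\lesssim 1$.

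\emph{Step 4, the main obstacle: decay in the $\mathcal{E}(\mu)$ directions.} To obtain the uniform bound (\ref{814}), one must also extract decay from $w_a=\mathcal{P}_{\mathcal{E}(\mu)}w$, and this is the step I expect to be hardest. The $\mathcal{E}(\mu)$-component of $\nabla_x\Phi$ is $s\,w_a+O(2^{(\epsilon-1)j})$. It is not bounded below, since $s$ may vanish, but $s$ depends on $t$ through $t/|\xi+x|$ with $\partial_t s\approx1$. So the stationary set in the $(x,t)$ variables forces
$$|s|\lesssim 2^{-j}/|w_a|,$$
and I would treat it in two pieces. First, localize $|s|\approx 2^{-k}$ dyadically. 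Where $2^{-k}|w_a|\gg2^{-j}$, integrate by parts along $\mathcal{E}(\mu)$ directions, gaining $(2^{j-k}|w_a|)^{-N}$. On the remaining set, use the $t$-measure $2^{-k}\lesssim 2^{-j}/|w_a|$. Each such gain supplies roughly $2^{-j}$ per direction of $\mathcal{E}(\mu)$. The point to check is whether the $m_g(\mu)$ directions of $\mathcal{E}(\mu)$ together supply the missing factor $2^{-j(m_g-1)}$ after summing over $k$, or whether only one direction's worth of decay is available. If only one direction's worth is available, the bound on $B(\eta)^c$ caps at $2^{j(m_g(\mu)-1)}$. That weaker bound still matches (\ref{cx1}), so it would suffice for (\ref{51}), but it is not the stated $\lesssim1$.
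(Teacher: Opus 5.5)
There is a genuine gap: your argument stops at $2^{j(m_g(\mu)-1)}$ and never reaches $\lesssim 1$. The cause is an accounting error in Step 3, not a missing source of decay, so Step 4 is unnecessary. In Step 2 you correctly found that one integration by parts along a fixed direction $v\in\mathcal{E}^{\perp}(\mu)$ nets a factor $\lesssim 2^{-\epsilon j}$:
\begin{itemize}
\item the phase supplies $(2^j|w_b|)^{-1}$;
\item the cutoff $\psi\big((|\xi+x|-|\eta+x|)/2^{(\epsilon-1)j}\big)$ costs at most $2^{(1-\epsilon)j}|w_b|+1\lesssim 2^{(1-\epsilon)j}|w_b|$, because $2^{(1-\epsilon)j}|w_b|>2^{\epsilon j}/\rho$ on $B(\eta)^c$;
\item every other term, including the derivatives of $1/\langle v,\nabla_x\Phi\rangle$ controlled by (\ref{743}), contributes at most $(2^j|w_b|)^{-1}\le \rho\, 2^{-2\epsilon j}$.
\end{itemize}
Since $N$ is at your disposal, $N$ steps give the uniform pointwise bound $|K^{\mu}_{\rm{main}}(\xi,\eta)|\lesssim 2^{jd}2^{-\epsilon Nj}$ on $B(\eta)^c$. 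The amplitude forces $|x|\le 1$ and $|\xi+x|\le 2$, so $\xi$ ranges over a bounded set. Taking $\epsilon N\ge d$ then gives $\int_{B(\eta)^c}|K^{\mu}_{\rm{main}}|\,d\xi\lesssim 2^{j(d-\epsilon N)}\le 1$. That is the paper's whole proof. It needs no Fubini splitting, no sublevel-set factor, and no decay in the $\mathcal{E}(\mu)$ directions. In Step 3 you recorded the surviving factor as ``$2^{O(\epsilon)j}$''. In fact the factor carried from Step 2 is $2^{-\epsilon Nj}$ with $N$ arbitrary, and it swamps the $2^{j(m_g(\mu)-1)}$ you were left with.

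Three smaller corrections:
\begin{itemize}
\item Your displayed bound with $(2^{\epsilon j}2^{j}|w_b|)^{-N}$ is not what integration by parts produces. The cutoff's derivative loss grows linearly in $|w_b|$, so the net gain per step is the uniform $2^{-\epsilon j}$, with no extra decay for large $|w_b|$. The $w_b$-integration in Step 3 is therefore not justified as written, and it is not needed anyway.
\item Take the direction independent of $(x,t)$, as the paper does with $v=\mathcal{P}_{\mathcal{E}^{\perp}(\mu)}((A-\mu I)w)/|\mathcal{P}_{\mathcal{E}^{\perp}(\mu)}((A-\mu I)w)|$. Then $\langle v,\nabla_x\rangle$ has constant coefficients. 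Your $e$, defined from $\nabla_x\Phi$, varies with $x$ and generates extra terms at each step.
\item For non-normal $A$, $(A-\mu I)w$ need not lie in $\mathcal{E}^{\perp}(\mu)$: the sum (\ref{5252}) is direct, not orthogonal. Step 1 only needs $|\mathcal{P}_{\mathcal{E}^{\perp}(\mu)}(A-\mu I)w|\gtrsim |w_b|$. This follows from $\mathcal{C}(A-\mu I)\cap\mathcal{E}(\mu)=\{0\}$ together with $|(A-\mu I)w|\approx|w_b|$, provided $\rho$ is also taken small relative to the implied constant.
\end{itemize}
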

To show (\ref{814}),  choose the best direction $v $ for the   derivatives of the phase:
\begin{lemma}
 For $(x,t,\xi,\eta)$ in  the support of the above integral   in (\ref{pc98}),
   there exists a unit vector $v\in \mathcal{E}^{\perp}(\mu)$   independent of $x$ such that
\begin{align}
&|\langle v, \nabla_x\rangle \Phi(x,t,\xi,\eta)|\approx  |\langle v, (A-\mu I)(\xi-\eta)\rangle|\approx |\mathcal{P}_{\mathcal{E}^{\perp}(\mu)}(\xi-\eta)|\ \text{and} \label{739}\\ 
&\left|\langle  v, \nabla_x\rangle^m\left(   \langle v,\nabla_x\Phi(x,t,\xi,\eta)\rangle\right)\right| \le  |\mathcal{P}_{\mathcal{E}^{\perp}(\mu)}(\xi-\eta)|\ \text{for any $m\ge 1$}.\label{743}
\end{align}
Moreover, there exists an upper bound of the derivatives of the amplitude 
\begin{align}\label{0099}
 \langle v, \nabla_x \rangle^m \left[\psi\left(\frac{ |\xi+x| -|\eta+x|
  }{2^{j(\epsilon-1)}}\right) \right]=O\left(  \left[\frac{2^j}{2^{j\epsilon}} |\mathcal{P}_{\mathcal{E}^{\perp}(\mu)}(\xi-\eta)|+1\right]^m \right). 
  \end{align}
\end{lemma}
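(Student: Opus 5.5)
We compute the $x$-gradient of the phase directly. Since $\Phi(x,t,\xi,\eta)=\langle A(\xi-\eta),x\rangle+t(|\xi+x|-|\eta+x|)$, we have
\begin{align*}
\nabla_x\Phi=A^T(\xi-\eta)+t\left(\frac{\xi+x}{|\xi+x|}-\frac{\eta+x}{|\eta+x|}\right).
\end{align*}
The second term expands as in the proof of (\ref{081}):
\begin{align*}
\frac{t(\xi-\eta)}{|\xi+x|}+t(\eta+x)\left(\frac{1}{|\xi+x|}-\frac{1}{|\eta+x|}\right).
\end{align*}
On the support of (\ref{pc98}), the last piece has size $O(2^{(\epsilon-1)j})$. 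Also $\frac{t}{|\xi+x|}=-\mu+O(\rho)$ because of the cutoff $\psi((\mu+t/|\xi+x|)/\rho)$. Hence
\begin{align*}
\nabla_x\Phi=(A^T-\mu I)(\xi-\eta)+O(\rho|\xi-\eta|)+O(2^{(\epsilon-1)j}).
\end{align*}
By Remark \ref{ob11} (or by writing the phase with $\langle x,A\xi\rangle$ as in the definition of $\mathcal{T}_j$), we may take the linear part to be $(A-\mu I)(\xi-\eta)$.

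Next we fix the direction. Apply Lemma \ref{lem3.3} with $\zeta=\xi-\eta$. Since $n(\mu)=1$, (\ref{59}) shows that $(A-\mu I)\zeta$ is parallel to a unit vector $v$ depending only on $\zeta$, hence independent of $x$. Then $\langle v,(A-\mu I)\zeta\rangle=|(A-\mu I)\zeta|\approx|\mathcal{P}_{\mathcal{E}^{\perp}(\mu)}\zeta|$ by (\ref{pv61}). The error terms must be absorbed into this main term, and this is the delicate point. The $O(2^{(\epsilon-1)j})$ error is harmless because $|\mathcal{P}_{\mathcal{E}^\perp}\zeta|>2^{(2\epsilon-1)j}/\rho$ on $B(\eta)^c$. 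The $O(\rho|\zeta|)$ error is a problem, since $|\zeta|$ may be much larger than $|\mathcal{P}_{\mathcal{E}^\perp}\zeta|$ when $\zeta$ is nearly in $\mathcal{E}(\mu)$. To handle it, I would not bound the error by $\rho|\zeta|$. Instead I would compute exactly:
\begin{align*}
\left\langle v,\left(\frac{t}{|\xi+x|}+\mu\right)\zeta\right\rangle=\left(\frac{t}{|\xi+x|}+\mu\right)\langle v,\zeta\rangle.
\end{align*}
Because $v\in\mathcal{E}^\perp(\mu)$, we have $|\langle v,\zeta\rangle|\le|\mathcal{P}_{\mathcal{E}^\perp}\zeta|$. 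This error is therefore $O(\rho|\mathcal{P}_{\mathcal{E}^\perp}\zeta|)$, and with $\rho$ small as in (\ref{cx21}) it is absorbed, giving (\ref{739}). This absorption is the main obstacle, and it is exactly where the choice $v\in\mathcal{E}^\perp(\mu)$ from (\ref{5252}) matters.

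For (\ref{743}), the linear part has no higher $x$-derivatives. The remaining part is $t\langle v,\nabla_x\rangle^{m+1}(|\xi+x|-|\eta+x|)$. This is a difference of smooth functions evaluated at $\xi+x$ and $\eta+x$ with $|\xi+x|,|\eta+x|\approx1$. By the mean value theorem it is bounded by $|\zeta|$. To reach the sharper bound $|\mathcal{P}_{\mathcal{E}^\perp}\zeta|$, I would use the one-dimensional structure: derivatives along $v$ of $|\cdot|$ depend on $\langle v,u\rangle$ and $|u|$. The differences of these two quantities are controlled by $|\langle v,\zeta\rangle|\le|\mathcal{P}_{\mathcal{E}^\perp}\zeta|$ and by $||\xi+x|-|\eta+x||\lesssim2^{(\epsilon-1)j}\lesssim|\mathcal{P}_{\mathcal{E}^\perp}\zeta|$. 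If needed, the constant can be absorbed by the same argument.

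Finally, (\ref{0099}) follows from the chain rule. Each $v$-derivative of the cutoff's argument gives a factor $2^{(1-\epsilon)j}\langle v,\nabla_x\rangle(|\xi+x|-|\eta+x|)$. This is $O(2^{(1-\epsilon)j}|\mathcal{P}_{\mathcal{E}^\perp}\zeta|)$ by the bound just proved. Higher derivatives of the argument give the same or better factors, so the Faà di Bruno terms are bounded by $[2^{(1-\epsilon)j}|\mathcal{P}_{\mathcal{E}^\perp}\zeta|+1]^m$.
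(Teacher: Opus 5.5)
\textbf{There is a genuine gap: the choice of $v$.} Your outline matches the paper's. You expand $\nabla_x\Phi$ as in (\ref{409}), fix a direction $v$ depending only on $\zeta=\xi-\eta$, and absorb the $\rho$-term using $\langle v,\zeta\rangle=\langle v,\mathcal{P}_{\mathcal{E}^{\perp}(\mu)}\zeta\rangle$. You take $v$ parallel to $w:=(A-\mu I)\zeta$, and use (\ref{59}) to get both $\langle v,w\rangle=|w|$ and $v\in\mathcal{E}^{\perp}(\mu)$. Together these assert $\mathcal{C}(A-\mu I)\subset\mathcal{E}^{\perp}(\mu)$. That does not follow from (\ref{5252}), which is a direct sum but not an orthogonal one, and it is false in general.

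Take $A=\begin{pmatrix}-1&1\\0&-2\end{pmatrix}$ (padded by a diagonal block if $d>2$) and $\mu=-1$. Then $n(\mu)=1$ and $\mathcal{E}(\mu)=\mathrm{span}(\mathbf{e}_1)$. For $\zeta=(s,\tau)^T$ one gets $w=\tau(1,-1)^T$, so your $v=\pm(1,-1)^T/\sqrt2$ and $\langle v,\zeta\rangle=\pm(s-\tau)/\sqrt2$. This is not controlled by $|\mathcal{P}_{\mathcal{E}^{\perp}(\mu)}\zeta|=|\tau|$.

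Now take $x=0$ and $\xi=-\eta=(1,\tau/2)^T$ with $2^{(2\epsilon-1)j}/\rho<|\tau|\ll\rho$. This point lies in the support of (\ref{pc98}) and in $B(\eta)^c$, and there $|\xi+x|=|\eta+x|$, so your last error term vanishes. The term $(\mu+t/|\xi+x|)\langle v,\zeta\rangle$ can be of size $\approx\rho$, while the main term is only $\sqrt2|\tau|$. For a suitable $t$ near $1$ they cancel, giving $\langle v,\nabla_x\Phi\rangle=0$.

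So the lower bound in (\ref{739}) is false for your $v$. The inequality $|\langle v,\zeta\rangle|\le|\mathcal{P}_{\mathcal{E}^{\perp}(\mu)}\zeta|$, which you reuse for (\ref{743}) and (\ref{0099}), also fails. Shrinking $\rho$ does not help, because $|\mathcal{P}_{\mathcal{E}(\mu)}\zeta|/|\mathcal{P}_{\mathcal{E}^{\perp}(\mu)}\zeta|$ is unbounded as $j\to\infty$.

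\textbf{The repair} is the paper's choice $v=\mathcal{P}_{\mathcal{E}^{\perp}(\mu)}w/|\mathcal{P}_{\mathcal{E}^{\perp}(\mu)}w|$. It lies in $\mathcal{E}^{\perp}(\mu)$ by construction but is generally not parallel to $w$ (above, $v=\mp\mathbf{e}_2$). Then $\langle v,w\rangle=|\mathcal{P}_{\mathcal{E}^{\perp}(\mu)}w|$, and the lower bound must come from $n(\mu)=1$ directly, not from (\ref{59}). The paper's own appeal to (\ref{59}) for (\ref{740}) needs the same correction.

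Since $A\mathcal{E}(\mu)\subset\mathcal{E}(\mu)$, in the orthogonal splitting $\mathbb{R}^d=\mathcal{E}(\mu)\oplus\mathcal{E}^{\perp}(\mu)$ one has $A=\begin{pmatrix}\mu I&B\\0&C\end{pmatrix}$. Hence $\mathcal{P}_{\mathcal{E}^{\perp}(\mu)}w=(C-\mu I)\mathcal{P}_{\mathcal{E}^{\perp}(\mu)}\zeta$. The condition $m_a(\mu)=m_g(\mu)$ is exactly the invertibility of $C-\mu I$, so $|\mathcal{P}_{\mathcal{E}^{\perp}(\mu)}w|\approx|\mathcal{P}_{\mathcal{E}^{\perp}(\mu)}\zeta|$. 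By contrast, (\ref{59}) would amount to $B=0$. The absorption step then needs $\rho$ small compared with the smallest singular value of $C-\mu I$, which (\ref{cx21}) alone does not ensure.

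With this $v$ the rest of your proposal goes through. Your treatment of (\ref{743}) is tidier than the paper's term-by-term differentiation of (\ref{pv53}): $\langle v,\nabla\rangle^k|u|$ is a smooth function of $(\langle v,u\rangle,|u|)$ for $|u|\approx1$. So its difference between $u=\xi+x$ and $u=\eta+x$ is $O(|\langle v,\zeta\rangle|+\bigl||\xi+x|-|\eta+x|\bigr|)$. It also yields (\ref{0099}) via Fa\`a di Bruno, as you say.

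A minor slip: $\nabla_x\langle A(\xi-\eta),x\rangle=A(\xi-\eta)$ directly, so there is no $A^T$ and no need to appeal to Remark \ref{ob11}.
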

 \begin{proof}[Proof of (\ref{739})]
From   $|\xi-\eta|\ge 2^{(3 \epsilon-1) j} $  and
 $ \left| |\xi+x| -|\eta+x|\right| \le 2^{(\epsilon-1)j} $
in (\ref{pc98}),  
   \begin{align} \label{409}  
 \nabla_x \Phi(x,t,\xi,\eta)
 &= A (\xi-\eta) +t\left( \frac{\xi-\eta}{|\xi+x|}  +(\eta+x)  \left( \frac{1}{|\xi+x|} - \frac{1}{|\eta+x|} \right) \right) \nonumber \\
&= \left(A-\mu I\right) (\xi-\eta)+ \left(\mu+\frac{t}{|\xi+x|}  \right)   (\xi-\eta)  +O\left(2^{(\epsilon-1)j}\right). 
   \end{align} 
From (\ref{pv61}) and (\ref{59}),  by choosing $$v:=\frac{\mathcal{P}_{\mathcal{E}^{\perp}(\mu)} \left(\left(A-\mu I\right) (\xi-\eta)\right)  }{|  \mathcal{P}_{\mathcal{E}^{\perp}(\mu)}\left(\left(A-\mu I\right) (\xi-\eta)\right)   |} \in \mathcal{E}(\mu)^{\perp},$$   one can obtain  that
\begin{align}\label{740}
|\langle v,\left(A-\mu\right) (\xi-\eta)  \rangle| \approx|  \left(A-\mu I\right) (\xi-\eta)   |\approx  |\mathcal{P}_{\mathcal{E}^{\perp}(\mu)}(\xi-\eta)|.
\end{align}
From $ \left\langle v,  \mathcal{P}_{\mathcal{E}(\mu)}(\xi-\eta)\right\rangle=0$ in  (\ref{409})  due to $v\in \mathcal{E}(\mu)^{\perp}$, it follows  that
\begin{align}
 \langle v,\nabla_x \Phi(x,t,\xi,\eta)\rangle&=\langle v,\left(A-\mu I\right) (\xi-\eta)  \rangle+ \left\langle v, \left(\mu+\frac{t}{|\xi+x|}  \right)  \mathcal{P}_{\mathcal{E}^{\perp}(\mu)}(\xi-\eta)\right\rangle\nonumber\\
 &+ t \left( \frac{1}{|\xi+x|} - \frac{1}{|\eta+x|} \right)  \langle v, (\eta+x)  \rangle.  \label{pv53}
\end{align}
Then this together with the condition (1)-(2) below
 \begin{itemize}
\item[(1)]  $\left|\mu+\frac{t}{|\xi+x|}\right|\le \rho\ll 1$   in the support of $\psi\left(\left( \mu  +  \frac{t}{|\xi+x|}\right)\frac{1}{\rho}\right)$ 
\item[(2)]
$| \mathcal{P}_{E^{\perp}}(\mu)(\xi-\eta)|\gtrsim 2^{j(2\epsilon-1)} $ in (\ref{814})
\end{itemize}
   implies that the first term of the RHS in (\ref{pv53}) has a size $\approx |\mathcal{P}_{\mathcal{E}^{\perp}(\mu)}(\xi-\eta)|$    in (\ref{740}) while the second term $\ll  |\mathcal{P}_{\mathcal{E}^{\perp}(\mu)}(\xi-\eta)|$ and  the third term $=O(2^{j(\epsilon-1)})$.   
\end{proof}
\begin{proof}[Proof of (\ref{743})]
In the above, we proved the case $m=1$. To show $m= 2,3,\cdots,$ observe that  $\nabla_x$ of the first two terms of (\ref{pv53}) is controlled by $ |\mathcal{P}_{\mathcal{E}^{\perp}(\mu)}(\xi-\eta)|$, and the third term controlled by
\begin{align*} 
 \langle v,\nabla_x \rangle \left(\frac{1}{|\xi+x|}-\frac{1}{|\eta+x|}\right) &=\langle v,\xi+x\rangle \left(\frac{1}{|x+\xi|^3}-\frac{1}{|x+\eta|^3}\right)+\frac{\langle v,\xi-\eta\rangle}{|\eta+x|^3}.
 \end{align*}
Then the first term is controlled by $ \big| |\xi+x|-|\eta+x| \big| =O(2^{j(\epsilon-1)})$ and the second     $\frac{\langle v,\xi-\eta\rangle}{|\eta+x|^3} \approx |P_{\mathcal{E}^{\perp}(\mu)}(\xi-\eta)| $ as $v\in \mathcal{E}^{\perp}(\mu)$. Repeat this to obtain (\ref{743}). \end{proof}
 \begin{proof}[Proof of (\ref{0099})]
In the similar way as   in (\ref{409}) with $| |\xi+x| -|\eta+x||\le2^{j(\epsilon-1)}$, \begin{align*} 
\left| \langle v, \nabla_x \rangle \left[\psi\left(\frac{ |\xi+x| -|\eta+x|
  }{2^{j(\epsilon-1)}}\right)\right] \right|&\lesssim \left| \frac{1}{2^{j(\epsilon-1)}}\left\langle v, \frac{\xi-\eta}{|\xi+x|} \right\rangle+O\left(1\right) \right| 
   \end{align*}  
which is $O(2^{j(1-\epsilon)} |\mathcal{P}_{\mathcal{E}^{\perp}(\mu)}(\xi-\eta)|+1 )$.   This yields (\ref{0099}).
 \end{proof}

  \begin{proof}[Proof of (\ref{814})]
Take $v$ as above.  Put
$$\mathcal{L}[g(x,t,\xi,\eta)]=\langle v, \nabla_x \rangle \left[ \frac{g(x,t,\xi,\eta)}{   i 2^j\langle v, \nabla_x \rangle  [\Phi(x,t,\xi,\eta)] }\right] 
.$$
Apply the integration by parts with respect to $dx$ along the direction $v$. Then
   \begin{align*} 
|K^{\mu}_{\rm{main}}(\xi,\eta)|&= 2^{jd}\bigg|\int_{\mathbb{R}^{d+1} } e^{ i 2^j 
\Phi(x,t,\xi,\eta)}
\mathcal{L}^N\bigg[
 \psi(x)\chi(t)\chi\left( 
t |\xi+ x| \right)\chi\left(  t |\eta+
x| \right)\nonumber\\
&
\times \psi\left(\left( \mu  +  \frac{t}{|\xi+x|}\right)\frac{1}{\rho}\right) \psi\left(\frac{ |\xi+x| -|\eta+x|
  }{2^{j(\epsilon-1)} }\right) \bigg]dxdt\bigg|.
  \end{align*}
  We utilize this   together with (\ref{739})-(\ref{0099})   to obtain that
  \begin{align*}
&|K^{\mu}_{\rm{main}}(\xi,\eta)|  \lesssim     \frac{\left[\frac{2^j}{2^{j\epsilon}} |\mathcal{P}_{\mathcal{E}^{\perp}(\mu)}(\xi-\eta)|+1\right]^N  \tilde{\psi}(\xi)\tilde{\psi}(\eta)}{|2^j\mathcal{P}_{\mathcal{E}^{\perp}(\mu)}(\xi-\eta)|+1|^N}   \lesssim  
  2^{jd}  \frac{   \tilde{\psi}(\xi)\tilde{\psi}(\eta)}{2^{\epsilon N j}}.
 \end{align*}
This with the support condition $ | \xi|,|\eta|  <5$ yields that for $\epsilon N\ge d$,
  \begin{align*}
\int_{B(\eta)^c} |K^{\mu}_{\rm{main}}(\xi,\eta)|d\xi \lesssim  2^{j (d-\epsilon N)}\le  1 .
  \end{align*}
Therefore we obtain (\ref{814}).
  \end{proof}

\section{Lower Bounds in Main Theorem 2}\label{sec9}
 \begin{lemma}
 For    $A\in G(\mathbb{R}^d)$, let $\mu\in\Lambda(A)\cap \mathbb{R}$.
  Suppose that  $\text{dim}(\mathcal{E}(\mu))=m$. 
  Then it holds that
\begin{align}\label{nv32}
  &\|\mathcal{M}_{S^{d-1}(A)} \|_{L^p(\mathbb{R}^{d+1})\rightarrow L^p(\mathbb{R}^{d+1})}=\infty
 \ \text{$\forall p\in \left[1,\frac{d-(m-1)}{d-(m-1)-1}\right]$ if $m<d$,}\\
&|\mathcal{M}_{S^{d-1}(I)} \|_{L^p(\mathbb{R}^{d+1})\rightarrow L^p(\mathbb{R}^{d+1})}=\infty\ \text{$\forall p\in [1,\infty)$ if $m=d$}\nonumber
\end{align}
\end{lemma}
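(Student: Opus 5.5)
The plan is to adapt Stein's classical counterexample for the Euclidean spherical maximal function, carried out in the $d'=d-(m-1)$ variables transversal to the eigenspace $\mathcal{E}(\mu)$. By Remark~\ref{ob11} we may replace $A$ by $A^T$ and assume $\langle Ax,y\rangle=\langle x,A^Ty\rangle$ with $\mu$ an eigenvalue of the transpose as well.

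\textbf{Straightening the phase.} For $x$ we write $x=x_a+x_b$ with $x_a=\mathcal{P}_{\mathcal{E}(\mu)}x$ and $x_b=\mathcal{P}_{\mathcal{E}^{\perp}(\mu)}x$. We set $z=x-ty$ and split
$$\langle Ax,ty\rangle=\mu t\langle x,y\rangle+t\langle (A-\mu I)x,y\rangle .$$
Since $(A-\mu I)x_a=0$, we have $|(A-\mu I)x|\lesssim |x_b|$. Moreover $2t\langle x,y\rangle=|x|^2+t^2-|z|^2$. Hence on the sphere $z=x-ty$ the last coordinate equals
$$x_{d+1}-\tfrac{\mu}{2}\left(|x|^2+t^2\right)+\tfrac{\mu}{2}|z|^2+O(|x_b|).$$
So when $x_b$ is small, the lifted sphere lies close to a translate of the quadric $\Sigma_c=\{(z,\tfrac{\mu}{2}|z|^2+c)\}$, and the free parameter $t$ can be used to adjust the constant $c$. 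This is the mechanism that kills $m-1$ curvatures: the $m$ directions of $\mathcal{E}(\mu)$ behave flat, with one of them absorbed by the dilation.

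\textbf{Test function.} We introduce the transversal variable
$$w(z,z_{d+1})=\left(z_b,\ z_{d+1}-\tfrac{\mu}{2}|z|^2\right)\in\mathbb{R}^{d-m}\times\mathbb{R}=\mathbb{R}^{d'}.$$
We take
$$f(z,z_{d+1})=\psi(z_a)\,\psi(w)\,|w|^{-(d'-1)}\left(\log\tfrac{1}{|w|}\right)^{-1}.$$
Since the change of variables $(z,z_{d+1})\mapsto(z_a,w)$ has Jacobian one, Fubini via (\ref{215a}) shows $f\in L^p$ exactly for $p\le d'/(d'-1)$.

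For $x$ in a fixed set $U$ of positive measure, we choose $t=t(x)\in[1,2]$ solving $x_{d+1}-\tfrac{\mu}{2}(|x|^2+t^2)=0$. We restrict to $|x_a|\approx1$ and $x_{d+1}$ in a suitable interval, and we also require that the sphere $x-tS^{d-1}$ meets the region $|z_a|<1/2$ in a piece $\Gamma$ of positive $\sigma$-measure. On $\Gamma$ we then have $w(x-ty)=(x_b-ty_b,\,O(|x_b|))$. Letting $|x_b|\to0$, the average $\int f\,d\sigma$ dominates an integral of $|w|^{-(d'-1)}/\log$ over a piece of sphere passing through $w=0$. The set where $w\approx0$ has codimension $d-m$ in the sphere, while the function diverges with exponent $d'-1=d-m$, so the integral is logarithmically divergent. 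We therefore expect
$$\mathcal{M}f(x)\gtrsim |x_b|^{0}\log\log\tfrac1{|x_b|}\quad\text{or}\quad \mathcal{M}f=\infty$$
on $U$. A cleaner route is to replace the logarithmic factor by $\log^{-1/p}$-type factors, or to take a sum of such functions with centers of $w$ dense, so that $\mathcal{M}f=\infty$ on $U$ while $\|f\|_p<\infty$ for every $p\le d'/(d'-1)$. This mirrors Stein's argument in $\mathbb{R}^{d'}$.

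\textbf{Main obstacle and the case $m=d$.} The main obstacle is the lower bound for the spherical average. One must check that along $\Gamma$ the map $y\mapsto w(x-ty)$ is a submersion onto a neighbourhood of $0$ in $\mathbb{R}^{d'}$ (after including $t$ or the parameter $x_{d+1}$), so that the integrand's singularity is really integrated over a transversal $d-m$ dimensional family. We also need the $O(|x_b|)$ error from $(A-\mu I)x$ to be harmless; we will handle it by exploiting the freedom in $t$. When $m=d$ we have $A=\mu I$ and $w$ reduces to the scalar $z_{d+1}-\tfrac{\mu}{2}|z|^2$. In that case we take $f$ to be the indicator of a $\delta$-neighbourhood of $\Sigma_0\cap\{|z|\le 1\}$, so $\|f\|_p^p\approx\delta$. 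Then $\mathcal{M}f\gtrsim1$ on a fixed set of measure $\approx1$, because the whole lifted sphere lies in $\Sigma_0$ for the right $t$. This forces unboundedness for every $p<\infty$ as $\delta\to0$.
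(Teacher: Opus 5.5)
There is a genuine gap where you bound the extra term by $O(|x_b|)$ and then let $|x_b|\to 0$. The term $t\langle (A-\mu I)x,y\rangle$ depends on $y$, so the single free parameter $t$ cannot cancel it. The bound $|x_b|$ is small only for base points in a shrinking neighbourhood of the subspace $\{x_b=0\}$. For a base point with $x_b\neq 0$, the lifted sphere in general does not meet $\{w=0\}$. You then get only the finite lower bound $\log\log(1/|x_b|)$ you anticipate, or a power of $\log(1/|x_b|)$ with modified logarithmic factors. Such functions are in $L^p_{\mathrm{loc}}$ for every $p$, so no contradiction arises.

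Summing translates does not help. $\mathcal{M}_{S^{d-1}(A)}$ commutes with left translations of $\mathbb{G}_A$, so a dense family of translates only yields divergence on a countable union of translates of a null set, which is still null. In Stein's argument no density is needed: one function already gives $\mathcal{M}f=\infty$ on a set of positive measure.

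The condition you name as the main obstacle also points the wrong way. If $y\mapsto w(x-ty)$ were, for fixed $t$, a submersion onto a neighbourhood of $0$ in $\mathbb{R}^{d'}$, then $|w|^{-(d'-1)}$ would be integrable on the sphere. Divergence needs the opposite degeneracy: $|w_{d+1}|\lesssim |z_b|$ along the sphere.

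The missing idea is to move $A-\mu I$ off the base point. Using $ty=x-z$,
\[
t\langle (A-\mu I)x,y\rangle=\langle x,(A^T-\mu I)x\rangle-\langle x,(A^T-\mu I)z\rangle .
\]
The first term $R(x)$ depends only on $x$. This is what the freedom in $t$ should absorb: choose $t$ with $x_{d+1}-\frac{\mu}{2}(|x|^2+t^2)-R(x)=0$. By (\ref{pv61}) applied to $A^T$, the second term is $O(|\mathcal{P}_{\mathcal{N}(A^T-\mu I)^{\perp}}z|)$.

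So the transversal variable must be $z_b=\mathcal{P}_{\mathcal{N}(A^T-\mu I)^{\perp}}z$, not the projection onto $\mathcal{E}(\mu)^{\perp}$. Equivalently, use Remark \ref{ob11} to write the phase as $\langle x,tAy\rangle$ and work with $\mathcal{E}(\mu)$. This is the paper's decomposition $\mu\langle x,y\rangle-\sum_{j>m}B_j(x)(y_j-x_j)-R(x)$ in an adapted orthonormal frame.

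For non-normal $A$ with $m\ge 2$ this choice matters. On the $(m-1)$-sphere where the lifted sphere meets $\{\mathcal{P}_{\mathcal{E}(\mu)^{\perp}}z=0\}$, the term $\langle (A-\mu I)x,z\rangle$ is in general a nonconstant function. So $w$ does not vanish along that set, and the logarithmic divergence is lost.

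With the correct choice, $|w(x-ty)|\approx |x_b-ty_b|$ near the intersection. The average is then $\gtrsim\int_{|u|<c}|u|^{-(d-m)}(\log\frac1{|u|})^{-1}du=\infty$ for \emph{every} base point in a fixed set of positive measure, e.g. $|x_b|<t/2$, with no limit $|x_b|\to0$. A single such $f$ then covers all $p\le d'/(d'-1)$. Your treatment of the case $m=d$, $A=\mu I$, is fine as it stands.
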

\begin{example}
The case  $m=d$  occurs at $A=cI$ for $c\ne 0$ where  $I$ is the identity. 
\end{example}
\begin{proof}[Proof of (\ref{nv32})]
To prove (\ref{nv32}), we first claim the unboundedness of the operator norm for all $p<(d-m+1)/(d-m)$. Next, we   show $|\mathcal{M}_{S^{d-1}(A)}f(x,x_{d+1})|=\infty$ on  $(x,x_{d+1})\in B$ for $|B|\approx 1$ and   $f\in L^p(\mathbb{R}^{d+1})$ with  $p=(d-m+1)/(d-m)$ by taking $t=t(x,x_3)$ below,
\begin{align}\label{ak90}
 \mathcal{A}_{S^{d-1}(A)}f\left(x,x_{d+1},t \right)&= \int_{ S^{d-1} } f \left(x-ty,\, x_{d+1} -\langle x,  tAy\rangle \right)d\sigma(y).
\end{align}
Let $\{q_1,\cdots,q_m\} $ be an orthonormal basis of $\mathcal{E}(\mu)$ and let $\{q_1,\cdots,q_m,q_{m+1},\cdots,q_d\}$
be an orthonormal basis  of $\mathbb{R}^d$. Since $A$ is invertible, it holds that $\mu\ne 0$ and $\{q_1,\cdots,q_m,\tilde{q}_{m+1},\cdots,\tilde{q}_d\}$ are linearly independent where  
$$  Aq_1=\mu  q_1,\cdots,Aq_m=\mu q_m, Aq_{m+1}= \tilde{q}_{m+1},\cdots, Aq_{d}= \tilde{q}_{d}.$$
Consider the  two matrices:
\begin{itemize}
\item $Q=(q_1|\cdots|q_d)$, which is an orthogonal matrix and 
\item $\tilde{Q}=(q_1|\cdots|q_m|\tilde{q}_{m+1}|\cdots|\tilde{q}_d)$, which is an invertible matrix. 
\end{itemize}
We can observe that
 $AQ=\tilde{Q}D$ where $D$ is the diagonal matrix whose entries are $\lambda,\cdots,\lambda,1,\cdots,1$. Since $Q\in SO_d(\mathbb{R})$, it suffices to switch $A$ with  $Q^TAQ$ in our proof.  
Note that $$(Q^T\tilde{Q})=(\langle q_i,\tilde{q}_j\rangle)\ \text{ where $ \langle q_i,\tilde{q}_j\rangle=\delta_{ij}$ for $j\le m $.}$$
Hence
\begin{align*}
\langle x, Q^TAQy\rangle&= \langle Qx, \tilde{Q} D y\rangle=\left\langle \sum_{i=1}^d x_iq_i, \sum_{j=1}^m \mu y_jq_j+    \sum_{j=m+1}^dy_j\tilde{q}_j   \right\rangle\\
&=
\mu\sum_{i=1}^m x_iy_i+ \sum_{j=m+1}^d\sum_{i=1}^d \langle q_i,\tilde{q}_j\rangle    x_iy_j
\end{align*} 
which can be rearranged as
\begin{align*}
 &\left(\mu\sum_{i=1}^d x_iy_i\right)-  \left(\mu\sum_{j=m+1}^d x_j(y_j-x_j)  + \sum_{j=m+1}^d\sum_{i=1}^d \langle q_i,\tilde{q}_j\rangle  x_i(y_j-x_j) \right) \\
 &-\left(\mu\sum_{j=m+1}^d x_j^2- \sum_{j=m+1}^d\sum_{i=1}^d \langle q_i,\tilde{q}_j\rangle    x_i x_j\right)=\mu\langle x,y\rangle -\sum_{j=m+1}^d B_j(x)  (y_j-x_j)-R(x). 
 \end{align*}
 where
 $B_j(x)=\mu x_j-\sum_{i=1}^d \langle q_i,\tilde{q}_j\rangle  x_i$ and $R(x)=\sum_{j=m+1}^d x_j  B_j(x)$. So, by
taking $$[f](x,x_{d+1})=f(x,x_{d+1}+\frac{\mu}{2}|x|^2)$$  one  can rewrite (\ref{ak90}) as
\begin{align*}
[ \mathcal{A}_{S^{d-1}(A)}f] \left(x,x_{d+1},t \right)&= \int_{ S^{d-1} }f\left(x-ty,\, x_{d+1}+\frac{\mu}{2} |x|^2-\langle x,  Q^TAQ ty\rangle \right)d\sigma(y)\\
&=  \int_{S^{d-1} } [f]\left(x-ty,\, x_{d+1}+ \mu\langle x, ty\rangle  
-t^2\frac{\mu}{2}  - \langle x,  Q^TAQ ty\rangle \right)d\sigma(y) 
\end{align*}
where  the $(d+1)^{th}$ component of $[f]$ is expressed as
$$ \left(x_{d+1}+ \mu\langle x, ty\rangle  
-t^2\frac{\mu}{2} \right)+\left( -  \mu\langle x,ty\rangle-\sum_{j=m+1}^d B_j(x)(x_j-ty_j)+R(x) \right). $$
Next use the change of variable $x_{d+1}+R(x)\rightarrow x_{d+1}$. We have
\begin{align}\label{oop}
 &[\mathcal{A}_{S^{d-1}(A)}f]\left(x,x_{d+1},t \right)\nonumber\\
 &\qquad=  \int_{S^{d-1}}[f]\left(x-ty,\, x_{d+1}   
-t^2\frac{\mu}{2}  +\sum_{j=m+1}^d B_j(x)(ty_j-x_j)    \right)d\sigma(y).  
\end{align}
Let  $\mu\approx 1$  without loss of generality and  set $$B=\{(x,x_{d+1}):|x|\le 100\ \text{and}\  1\le |x_{d+1}|\le 2\}.$$  In (\ref{oop}), we take
\begin{align}\label{a599}
[f](y,y_{d+1})=\psi\left(\frac{y_1}{C},\cdots,\frac{y_{m}}{C}\right)\psi\left(\frac{y_{m+1}}{\delta},\cdots,\frac{y_{d}}{\delta}\right)\psi\left(\frac{y_{d+1}}{C\delta}\right)
\end{align}
where $C\gg 1$ and choose 
\begin{align*} 
t=t(x_{d+1}):= \sqrt{2|x_{d+1}|/\mu } \ \text{ satisfying}\  x_{d+1}-\frac{\mu}{2} t^2=0\ \text{in (\ref{oop})}.
 \end{align*}
If  $|x_j-ty_j|<\delta$ for $j=m+1,\cdots,d$,  then in the last component of $[f]$ in (\ref{oop}),  
\begin{align}\label{04nm}
\left|\sum_{j=m+1}^d B_j(x)(ty_j-x_j) \right|\le C\sum_{j=m+1}^d |ty_j-x_j|=O(\delta)\ \text{ for $(x,x_{d+1})\in B$.}
\end{align} 
Thus  for  $(x,x_{d+1})\in B$ and  $t=t(x_{d+1})$ with $f$ in (\ref{a599}), we can write (\ref{oop}) as
\begin{align}\label{ak50}
&[\mathcal{A}_{S^{d-1}(A)}f]\left(x,x_{d+1},t(x_{d+1})\right)=\nonumber\\
&\quad \int_{  S^{d-1} } \psi\left(\frac{x_1-ty_1}{C},\cdots,\frac{x_m-ty_m}{C}\right)\psi\left(\frac{(x_{m+1}-ty_{m+1})}{\delta},\cdots,\frac{(x_{d}-ty_d)}{\delta} \right) d\sigma(y). 
\end{align}
We can regard $ y_{m+1},\cdots,  y_{d}$ as free variables in the restriction
$$y_1=\sqrt{1-(y_2^2+\cdots+ y_m^2+ y_{m+1}^2+\cdots+y_d^2)}  $$
 and use $t =\sqrt{2|x_{d+1}|/\mu }\approx \sqrt{2|x_{d+1}| }$ to compute the above integral as
\begin{align*} 
\left|[\mathcal{A}_{S^{d-1}(A)}f]\left(x,x_{d+1},t(x,x_{d+1}) \right)\right|\approx \left(\frac{\delta}{\sqrt{|x_{d+1}|}}\right)^{d-m}\approx \delta^{d-m} 
\end{align*}
for each $(x,x_{d+1})\in B $.
Hence  it holds that
 \begin{eqnarray*} 
 \|\mathcal{M}_{S^{d-1}(A)} f \|_{L^p(\mathbb{R}^{d+1})}^p\ge \int_{B} \left|[\mathcal{A}_{S^{d-1}(A)}f](x,x_{d+1},t(x_{d+1}))\right|^p
dxdx_{d+1}\ge c \delta^{(d-m)p} .
\end{eqnarray*}
From (\ref{a599}), we have $\| f \|^p_{L^p(\mathbb{R}^{d+1})}\le  \delta^{d-m+1}.
$
This   implies that  $$
\|\mathcal{M}_{S^{d-1}(A)}\|^p_{L^p(\mathbb{R}^{d+1})\rightarrow
L^p(\mathbb{R}^{d+1})}\gtrsim\frac{\delta^{(d-m)p}}{\delta^{d-m+1}}
$$ which goes to $\infty$ if $\delta\rightarrow 0$   for $p<\frac{d-m+1}{d-m}$ in (\ref{nv32}). Let $p= \frac{d-m+1}{d-m}$. 
Fix $\delta>0$ and replace $  [f](y,y_{d+1})$ in (\ref{a599})  with
\begin{align*} 
\psi\left(\frac{y_1}{C},\cdots,\frac{y_{m}}{C}\right)\psi\left(\frac{y_{m+1}}{\delta},\cdots,\frac{y_{d}}{\delta}\right)\psi\left(\frac{y_{d+1}}{C\delta}\right)\frac{\log (|(y_{m+1},\cdots,y_d,y_{d+1})|)^{-1}  }{|(y_{m+1},\cdots,y_d,y_{d+1})|^{d-m}}.
\end{align*}
Then $\| f \|^p_{L^p(\mathbb{R}^{d+1})}$ is bounded by
$$ \int \frac{|\log (|(y_{m+1},\cdots,y_d,y_{d+1})|)^{-1} |^p }{|(y_{m+1},\cdots,y_d,y_{d+1})|^{d-m+1}}\psi\left(\frac{y_{m+1}}{\delta},\cdots,\frac{y_{d}}{\delta}\right)\psi\left(\frac{y_{d+1}}{C\delta}\right)dy_{m+1}\cdots dy_ddy_{d+1} 
$$
which is  also bounded  $\int_{r=0}^{r=C\delta} \frac{|\log  r|^{-p}}{r}dr<\infty$ due to $p>1$.
On the other hand, the corresponding  $[\mathcal{A}_{S^{d-1}(A)}f]\left(x,x_{d+1},t(x_{d+1})\right)$  for each $(x,x_{d+1})\in B $ with $t=t(x_{d+1})$ in (\ref{ak50})  is bounded away from  
\begin{align*} 
  & \int  \psi\left( \frac{(x_{m+1}-ty_{m+1})}{\delta},\cdots,\frac{(x_{d}-ty_d)}{\delta} \right) \frac{|\log (|(x_{m+1}-ty_{m+1}),\cdots,(x_{d}-ty_{d}))|)|^{-1}  }{|(x_{m+1}-ty_{m+1},\cdots,x_{d}-ty_{d})|^{d-m}} \\
&  dy_{m+1}\cdots dy_d  
   =\int  \psi\left( \frac{y_{m+1}}{\delta},\cdots,\frac{y_d}{\delta} \right) \frac{|\log (|(y_{m+1},\cdots, y_{d})|)|^{-1}  }{|(y_{m+1},\cdots, y_{d})|^{d-m}} dy_{m+1}\cdots dy_d\\
   &\approx \int_{r=0}^{r= \delta}\frac{1}{r}|\log(r)|^{-1}dr=\infty.
\end{align*}
  This shows   $[\mathcal{M}_{S^{d-1}(A)}f]\left(x,x_{d+1})\right)=\infty$ for $(x,x_{d+1})\in B$, which leads (\ref{nv32}). 
\end{proof}

\section{$L^2$ Sobolev Regularity, Proof of Main Theorem \ref{main4}}

\subsection{Ranks of Mixed Hessian Matrices }
Let $\phi(x,t,\xi)=\langle Ax,\xi\rangle +  t |x+\xi|$. We shall compute  ranks and determinants  of $[\phi]_{x\xi}''(x,t,\xi)$  according to various $A$.
\begin{lemma}\label{lem25}
Let $A\in G_{d}(\mathbb{R})$. Then all diagonal entries of $Q^TAQ$ for every $Q\in SO(\mathbb{R}^d)$ are zeros if and only if $A$ is a skew symmetric matrix.
\end{lemma}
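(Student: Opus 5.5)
The plan is to reduce the statement to the quadratic form $q(y)=\langle Ay,y\rangle$. The key observation is that the $i$-th diagonal entry of $Q^TAQ$ is $\langle AQ{\bf e}_i,Q{\bf e}_i\rangle=q(q_i)$, where $q_i=Q{\bf e}_i$ is the $i$-th column of $Q$. Since $\langle Ay,y\rangle=\langle A^Ty,y\rangle$, we have $q(y)=\langle A_{\rm{sym}}y,y\rangle$ with $A_{\rm{sym}}=\frac{A+A^T}{2}$. So the lemma amounts to saying that $q$ vanishes on every column of every special orthogonal matrix if and only if $A_{\rm{sym}}=0$.

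For the direction ``skew symmetric $\Rightarrow$ zero diagonals'', I will argue as follows. If $A^T=-A$, then $A_{\rm{sym}}=0$, so $q\equiv 0$ and every diagonal entry $q(q_i)$ vanishes for any $Q$. Equivalently, $Q^TAQ$ is again skew symmetric, and a skew symmetric matrix has zero diagonal.

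For the converse, I first note that every unit vector $u\in S^{d-1}$ occurs as the first column of some $Q\in SO(\mathbb{R}^d)$. To see this, complete $u$ to an orthonormal basis and, if necessary, flip the sign of the last basis vector so that the determinant is $+1$; this is possible since $d\ge 2$. The hypothesis then gives $\langle A_{\rm{sym}}u,u\rangle=0$ for all unit $u$, and by homogeneity for all $u\in\mathbb{R}^d$. Polarization then finishes the argument: for any $u,w$,
$$\langle A_{\rm{sym}}(u+w),u+w\rangle-\langle A_{\rm{sym}}u,u\rangle-\langle A_{\rm{sym}}w,w\rangle=2\langle A_{\rm{sym}}u,w\rangle=0,$$
using the symmetry of $A_{\rm{sym}}$. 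Hence $A_{\rm{sym}}=0$, i.e.\ $A^T=-A$. Alternatively, one can diagonalize $A_{\rm{sym}}=P\,\mathrm{diag}(\lambda_i)P^T$ with $P\in SO(\mathbb{R}^d)$ and take $Q=P$; this forces each $\lambda_i=0$ directly.

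No step here is a genuine obstacle. The only point needing care is that we are restricted to $SO$ rather than $O$, which is handled by the sign flip of one column; that flip does not change whether any diagonal entry vanishes. The invertibility assumption $A\in G_d(\mathbb{R})$ plays no role in the argument.
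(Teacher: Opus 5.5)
Your proposal is correct and is essentially the paper's argument. The paper takes $Q=I$ to get $a_{ii}=0$ and then a $Q\in SO(\mathbb{R}^d)$ having $\frac{1}{\sqrt2}({\bf e}_i+{\bf e}_j)$ as a column to get $a_{ij}+a_{ji}=0$, which is the coordinate form of your polarization step; you also justify why an arbitrary unit vector is a column of some special orthogonal matrix, a point the paper takes for granted.
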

\begin{proof}
We see that $\Leftarrow$ follows from the fact that $Q^TAQ$ is skew symmetric whenever $A$ is skew symmetric. We shall show $\Rightarrow$. Let $A=(a_{ij})$. Suppose that all diagonal entries   of $Q^TAQ $ are zeros  for  all $Q\in O(\mathbb{R}^d)$. Then one can see that
  $a_{ii}=0$ for all $i\in [d]$ by taking   $Q=I$.
Let $q:=\frac{1}{\sqrt{2}}({\bf e}_i+{\bf e}_j)$ be the $k^{th}$ column vector of some $Q\in SO(\mathbb{R}^d)$. Then  the $k^{th}$ diagonal entry of $Q^TAQ$ is $0=  \langle q, Aq\rangle =\frac{1}{2}\langle ({\bf e}_i+{\bf e}_j),(a_{\ell i}+a_{\ell j})_{\ell}\rangle=\frac{1}{2}(a_{i i}+a_{i j}+a_{j i}+a_{j j})=a_{ij}+a_{ji}$. Hence $A$ is skew symmetric.
\end{proof}
\begin{lemma}\label{lem515}
Let $A\in G_{d}(\mathbb{R})$  such that  $\Lambda(A)\cap \mathbb{R}=\emptyset$ and let $\phi(x,t,\xi)=\langle Ax,\xi\rangle +  t |x+\xi|$.   
 If $A$ is  skew symmetric, then   for all $t$ and $1/2\le |x+\xi|\le 2$,
\begin{align}\label{449}
|\det( \phi''_{x\xi}(x,t,\xi))|\ge c(1+|t|^{d-2}) 
\end{align}
with $c$ independent of $x,\xi,t$. If $A$ is not   skew symmetric and $t\ne 0$,  there is $x,\xi$: 
\begin{align}\label{s7}
 |\det( \phi''_{x\xi}(x,t,\xi))|=0\ \text{where $  |x+\xi|\approx t$.}
\end{align}
\end{lemma}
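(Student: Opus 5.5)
Writing $u=x+\xi$, the $x$-derivative of $\nabla_\xi\phi=A^Tx+t\,u/|u|$ gives the mixed Hessian up to transpose:
\begin{align*}
\phi''_{x\xi}=A^T+\frac{t}{|u|}\Big(I-\frac{uu^T}{|u|^2}\Big).
\end{align*}
Since $\det$ is invariant under conjugation by $Q\in SO_d(\mathbb{R})$ and $A^T$ is skew exactly when $A$ is, we rotate so that $u/|u|={\bf e}_1$. Then $\phi''_{x\xi}\simeq B+sP$, where $B=Q^TA^TQ$, $s=t/|u|$ and $P=\mathrm{diag}(0,1,\dots,1)$. Everything reduces to studying the polynomial $s\mapsto p_B(s)=\det(B+sP)$.

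\emph{Skew-symmetric case.} $B$ is skew and invertible, so $d=2n$ is even. Expand $\det(B+sP)$ in powers of $s$. The top coefficient $s^{d-1}$ is the $(1,1)$ entry of $B$, namely $0$. The $s^{d-2}$ coefficient is $\sum_{k\ge2}\det B[\{1,k\}]=\sum_{k\ge 2} b_{1k}^2=|B{\bf e}_1|^2=|A^T(u/|u|)|^2$, which is bounded below by $\sigma_{\min}(A)^2>0$. The constant term is $\det B=\det A\neq0$.

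For the intermediate terms, use skewness. Since $P$ is symmetric, $\det(B+sP)=\det(-B+sP)$, i.e. $p_B(s)=(-1)^d p_B(-s)\cdot$(sign bookkeeping). This forces $p_B$ to be even in $s$. Better still, write $B+sP$ and use that the $s^k$ coefficient is $\sum_{|S|=k,\,1\notin S}\det B[S^c]$, a sum of principal minors of a skew matrix. These are nonnegative (they are squares of Pfaffians) and vanish for odd size. Hence every coefficient is $\ge0$ and only even powers of $s$ survive. Therefore
\[
|p_B(s)|\ge \det A+|A^T u/|u||^2\,s^{d-2}\gtrsim 1+|s|^{d-2},
\]
and with $|u|\approx1$ this gives (\ref{449}). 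The key point to verify carefully is the positivity of principal minors of a real skew matrix: $\det$ of an even-size skew matrix equals $\mathrm{Pf}^2\ge0$.

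\emph{Non-skew case.} By Lemma \ref{lem25}, since $A^T$ is not skew, there is some $Q\in SO_d(\mathbb{R})$ with a nonzero diagonal entry of $Q^TA^TQ$. Permuting within $SO$ (adjusting one sign if necessary), we may assume that this entry is $b_{11}\neq0$, and we choose $u/|u|=Q{\bf e}_1$. Now $p_B(s)$ is a real polynomial of degree exactly $d-1$, with leading coefficient $b_{11}\neq0$ and $p_B(0)=\det A\ne0$.

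If $d-1$ is odd, $p_B$ has a real root $s_0\neq0$. Two issues remain. First, we need a root of the correct sign relative to $t$. Second, when $d-1$ is even the polynomial need not have a real root at all. I expect this to be the main obstacle.

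To handle both, use more freedom in $Q$ and a continuity/intermediate-value argument. The direction $u/|u|=\omega$ ranges over $S^{d-1}$, and $\det(B_\omega+sP_\omega)=\det(A^T+s(I-\omega\omega^T))$. For large $|s|$ this behaves like $s^{d-1}\langle A^T\omega,\omega\rangle$. If the quadratic form $\langle A\omega,\omega\rangle$ takes both signs, then continuity in $\omega$ at fixed large $s$ of the correct sign produces a zero. If instead it is semidefinite but nonzero, say $\langle A\omega,\omega\rangle\ge0$, compare $s\to+\infty$ with $s\to-\infty$ when $d-1$ is odd, and otherwise compare with $s\to0$ (value $\det A$). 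Choose $\omega$ where $\langle A\omega,\omega\rangle=0$ if possible; the sign at large $|s|$ is then governed by the $s^{d-2}$ coefficient $|A^T\omega|^2$ minus trace-type terms.

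Once a zero $(\omega_0,s_0)$ is found with $\operatorname{sign}s_0=\operatorname{sign}t$, set $|u|=t/s_0$ and $u=|u|\omega_0$. Then pick any $x$ and put $\xi=u-x$; this gives (\ref{s7}) with $|x+\xi|\approx t$ up to the constant $1/|s_0|$.
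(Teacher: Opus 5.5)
Your skew-symmetric argument is correct, and it is cleaner than the paper's. The paper splits into three regimes: $|\mu|\ll 1$, $|\mu|\gg 1$ (using the top two coefficients), and $|\mu|\approx 1$ (an injectivity argument via $\langle [A]x,x\rangle=0$). You instead observe that every coefficient of $\det(B+sP)$ is a sum of principal minors of a real skew matrix, hence equal to $\mathrm{Pf}^2\ge 0$ in even size and to $0$ in odd size. This gives $\det(B+sP)\ge \det A+|A^T\omega|^2 s^{d-2}$ for all real $s$ at once.

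The non-skew case has a genuine gap. First, you never use the hypothesis there. The assumption $\Lambda(A)\cap\mathbb{R}=\emptyset$ forces $d$ even and $\det A=\prod|\lambda_i|^2>0$, so your ``second issue'' does not arise. Take any $\omega$ with $\langle A\omega,\omega\rangle\ne 0$ and set $P_\omega=I-\omega\omega^T$. Then $p_\omega(s)=\det(A^T+sP_\omega)$ has odd degree $d-1$, leading coefficient $\langle A\omega,\omega\rangle$, and $p_\omega(0)>0$. The intermediate value theorem therefore always gives a root $s_0$ with $\operatorname{sign} s_0=-\operatorname{sign}\langle A\omega,\omega\rangle$.

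Second, your concern about the sign of the root relative to $t$ is legitimate. The paper's own proof simply sets $t/|x+\xi|=\mu_0$ and overlooks it. However, the continuity argument you sketch for the semidefinite case cannot succeed, because there (\ref{s7}) is false for one sign of $t$. Suppose $A_{\rm sym}\ge 0$, $s>0$, and $(A^T+sP_\omega)x=0$. Then $0=\langle A_{\rm sym}x,x\rangle+s|P_\omega x|^2$, so $x=c\omega$, and then $cA^T\omega=0$ forces $x=0$. Hence $\det\phi''_{x\xi}\ne 0$ for all $x,\xi$ whenever $t>0$. A concrete instance: for $d=2$ and $A=\begin{pmatrix}\epsilon&-1\\ 1&\epsilon\end{pmatrix}$ with $\epsilon>0$, one gets $\det\phi''_{x\xi}=1+\epsilon^2+\epsilon t/|x+\xi|>0$.

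Your vague step about ``the $s^{d-2}$ coefficient minus trace-type terms'' should be replaced by the statement your method actually proves, not by an attempt to prove the original one:
\begin{itemize}
\item (\ref{s7}) holds for a given $t\ne 0$ whenever some $\omega$ satisfies $t\langle A\omega,\omega\rangle<0$; this happens for both signs of $t$ if and only if $A_{\rm sym}$ is indefinite.
\item In general it holds for at least one of $\pm t$, i.e.\ for one of the phases $\langle Ax,\xi\rangle\pm t|x+\xi|$.
\end{itemize}
In either case, take $u=(t/s_0)\omega$, any $x$, and $\xi=u-x$.
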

\noindent
{\bf Observation}.\ \ Put $u=(u_i):=(x_i+\xi_i)$ with $|u|\approx 1$ and express
\begin{align}\label{451}
 [\phi]_{(x_1\cdots x_d)(\xi_1\cdots \xi_d)}''(x,t,\xi)&=    A+\frac{t}{|u|}I -\frac{t}{|u|^3}U\ \text{where}\ U=\left(\begin{matrix}
 u_1 u\\
 u_2 u\\
  \vdots\\
u_d u\\
 \end{matrix}  \right). 
\end{align}
 If $t=0$, one has (\ref{449}) as  $A\in G_d(\mathbb{R})$.  Let $t\ne 0$ and  $k\in [d]$.   In  (\ref{451}),  
observe that $U$ is a symmetric matrix of rank $1$, and  its eigenvalues are $0,\cdots,0,|u|^2,0,\cdots,0$ with their eigenvectors  $q_k=\frac{u}{|u|}$ and $q_i\in \text{span}^{\perp}(q_k)$ for $i\in [d]\setminus\{k\}$. So
\begin{align}\label{gamu1}
  Uq_k=|u|^2q_k \ \text{and }\  Uq_i=0q_i\ \text{for all $i=[d]\setminus\{k\}$} 
\end{align}
and they form  $Q=(q_1|\cdots|q_d)\in SO_d(\mathbb{R})$ such that $U=QDQ^T$ and   $D $ is a diagonal matrix $|u|^2E_k$  with $E_k=({\bf 0}|\cdots|{\bf 0}|{\bf e}_k|{\bf 0}|\cdots|{\bf 0})$. 
Express RHS in (\ref{451}) as
\begin{align} \label{gamu2}
Q(Q^TAQ+\frac{t}{|u|}I -\frac{t}{|u|^3}D)Q^T=Q(Q^TAQ+\frac{t}{|u|}\hat{I}_k)Q^T.
\end{align}
where $\hat{I}_k=I-E_k$.
 Then we have
\begin{align} \label{s5}
\det( [\phi]_{x\xi}''(x,t,\xi))=\det(Q^TAQ+\frac{  t}{|u|}\hat{I}_k).
\end{align}
\begin{proof}[Proof of (\ref{449})]
Put  $\mu:=t/|u|$ in (\ref{s5}). Since $|u|\approx 1$,   it suffices to show that  
\begin{align}\label{450}
|\det(Q^TAQ+\mu\hat{I}_k)|\ge c(1+|\mu|^{d-2})\ \text{ for   $c>0$ depending only on $A$.}
\end{align}
{\bf Case 1}. If $|\mu|\ll 1$,   $|\det(Q^TAQ+\mu\hat{I}_k)|=|\det(Q^TAQ)|+O(\mu)\approx |\det(A)|>0$. \\
{\bf Case 2}. Let $|\mu|\gg 1$. Fix $A$ and set the continuous mapping
$Q\in SO_d(\mathbb{R})\rightarrow  |Q^TAQ({\bf e}_k)|\in \mathbb{R}$ where $Q^TAQ({\bf e}_k)$ is the $k^{th}$ column of $Q^TAQ$. Then from compactness of $SO_d(\mathbb{R})$ and $\det(Q^TAQ)=\det(A)\ne 0$, one see that
$$\min_{Q\in SO_d(\mathbb{R}) } |Q^TAQ({\bf e}_k)|\ge c>0.$$
Put
  $Q^TAQ=(a_{ij}(q))$. Then we write $\det(Q^TAQ+\mu \hat{I}_k)$ as
\begin{align*}
a_{kk}(q)\mu^{d-1}-(a_{1k}(q)a_{k1}(q)+\cdots+a_{dk}(q)a_{kd}(q))\mu^{d-2}+\cdots+ \det(A)\mu^0.
\end{align*}
where $a_{kk}(q)=0$ and  $a_{ki}(q)=-a_{ik}(q)$ because $ Q^TAQ$  is   skew symmetric.   Thus
\begin{align*}
&\det(Q^TAQ+\mu \hat{I}_k)  \approx |Q^TAQ({\bf e}_k)|^2\mu^{d-2}
\end{align*}
as   $|\mu|\gg 1$. This shows (\ref{450}) for $|\mu|\gg 1$.\\
{\bf Case 3}. Let $|\mu|\approx 1$. It suffices to work with $k=d$ in (\ref{450}) and write  $Q^TAQ$ as $[A]$ to claim that $|[A]x+\mu \hat{I}_dx|\ge c|x|$ for all $x$. 
Note that $\hat{I}_dx=(x_1,\cdots,x_{d-1},0)^T$.   Consider the following two cases.\\
{\bf Case 3-1}. $|\hat{I}_dx|\approx |x|$. Then $|(x_1,\cdots,x_{d-1})|\approx |x|$, by using  $\langle [A]x, x\rangle=0$.
$$| [A]x+\mu\hat{I}_dx|\ge |\langle [A]x+\mu\hat{I}_dx, \frac{x}{|x|}\rangle|=|\mu\frac{(x_1^2+\cdots+x_{d-1}^2)}{|x|}|\approx |x|.$$
{\bf Case 3-2}.  $|\hat{I}_dx|\ll |x|$.  Then, from $|[A]x|\gtrsim |x|$ due to $[A]\in G_d(\mathbb{R})$,
$$|  [A]x+\mu\hat{I}_dx|\ge |[A]x|-|\mu  \hat{I}_dx|\gtrsim |x| -  |\mu\hat{I}_dx| \approx |x|.$$
showing (\ref{450}) for $|\mu|\approx 1$.  Therefore,  from  ${\bf Cases}$ 1-3, we obtain (\ref{449}).
\end{proof}
\begin{proof}[Proof of (\ref{s7})]
Suppose that $A$ is not skew symmetric. Then by Lemma \ref{lem25},  one can see that 
$Q^TAQ=(s_{ij})$   has a non-zero diagonal entry   $s_{ii}\ne 0$.  Without loss of generality, let $i=1$ so that $s_{11}\ne 0$. In (\ref{451})-(\ref{s5}),  one can  take $k=1$ and put  $\mu:=t/|u|$.   
For $s_{11}\ne 0$ above, we have the $d-1$ degree polynomial in $t$ such as
$$ \det(Q^TAQ+\mu\hat{I}_1 )=s_{11}\mu^{d-1}+()\mu^{d-2}+\cdots+()\mu^1+\det(A).$$
On the other hand,  the hypothesis $\Lambda(A)\cap \mathbb{R}= \emptyset$ implies that  $d$ is even (a characteristic polynomial of odd degree has at least one real solution). 
So,  $d-1$ is odd with  $\det(A)\ne 0$ implies that there exists a zero $\mu=\mu_0\ne 0$ of the above odd degree polynomial $ \det(Q^TAQ+\mu\hat{I}_1 )$. Hence, for $t\ne 0$, we can choose $x ,\xi $ such that  $ \frac{t}{|x+\xi|}=\mu_0$ which  satisfies (\ref{s7}).  
\end{proof}

\begin{lemma}\label{lem44}
Let  $\phi(x,t,\xi)=\langle Ax,\xi\rangle +  t |x+\xi|$ and $A\in G_d(\mathbb{R})$.  If $\mu\in \Lambda(A)\cap \mathbb{R}$ with  $m=\text{dim}(\mathcal{E}(\mu))$,
then given $t\ne0$, there are    $ x,\xi$ with $|x+\xi|\approx |t|$:
\begin{align}\label{c44}
\text{rank}(\phi''_{x\xi}(x,t,\xi))\le  d-m.
\end{align}

\end{lemma}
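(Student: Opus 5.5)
Using the formula (\ref{451}), we have
$$\phi''_{x\xi}(x,t,\xi)=A+\frac{t}{|u|}I-\frac{t}{|u|^3}U, \qquad u=x+\xi,$$
where $U=(u_iu)$ is the rank-one symmetric matrix $u u^T$. The plan is to choose $u$ so that the scalar part cancels the eigenvalue exactly, and then count the rank directly; no determinant expansion is needed.

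Since $A$ is invertible, $\mu\ne 0$. Given $t\ne 0$, we would like $\frac{t}{|u|}=-\mu$. This requires $t$ and $\mu$ to have opposite signs. If they have the same sign, we replace the choice by the sign convention coming from the symmetry $t\mapsto -t$ of the problem. Concretely, we may assume $t/\mu<0$ after possibly interchanging the roles via $\xi\to-\xi$, or simply allowing $t$ of either sign as in the statement. Then we set $|u|=|t|/|\mu|$, which gives $|u|\approx |t|$ because $|\mu|\approx 1$ is a fixed constant depending on $A$. We pick any $x,\xi$ with $x+\xi=u$, and the direction of $u$ is left free for now. With this choice,
$$\phi''_{x\xi}=(A-\mu I)-\frac{t}{|u|^3}\,u u^T .$$

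The rank estimate is then subadditivity of rank. We have $\operatorname{rank}(A-\mu I)=d-m$ by rank–nullity, since $\mathcal{N}(A-\mu I)=\mathcal{E}(\mu)$ has dimension $m$, and $\operatorname{rank}(uu^T)=1$. This alone only gives $d-m+1$, so the direction of $u$ must be chosen carefully to remove the extra $1$. Choose $u\in\mathcal{C}(A-\mu I)$, nonzero, scaled to the prescribed length; this is possible because $m<d$. Then every column of $uu^T$ is a multiple of $u$, which lies in $\mathcal{C}(A-\mu I)$. Hence the column space of $\phi''_{x\xi}$ is contained in $\mathcal{C}(A-\mu I)$, and so
$$\operatorname{rank}\phi''_{x\xi}\le d-m,$$
which is (\ref{c44}). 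If $m=d$, then $A=\mu I$. In that case $\phi''_{x\xi}=-\frac{t}{|u|^3}uu^T$ has rank $1$, so the bound should be read as $\le\max(d-m,1)$; this case is consistent with the unboundedness already shown in Section \ref{sec9}.

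An alternative direction choice gives the same conclusion by looking at rows. Take $u$ with $u\perp\mathcal{C}(A-\mu I)^{\perp}$, that is, again $u\in\mathcal{C}(A-\mu I)$, or else use the transposed version with $u\in\mathcal{C}((A-\mu I)^T)$ so that the rows of $uu^T$ lie in the row space of $A-\mu I$. Either way, $\phi''_{x\xi}$ is a sum of two matrices whose column spaces both sit inside a common space of dimension $d-m$.

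The main obstacle is the sign condition $t/|u|=-\mu$. If the signs are incompatible, I would instead use the identity $|x+\xi|$ with $\xi\to-\xi$, i.e. the reflected phase, or note that (\ref{c44}) is stated only for the existence of some degenerate configuration and apply it with $-t$. Failing that, I would restrict the statement to $t$ with $\operatorname{sign}(t)=-\operatorname{sign}(\mu)$, which is the situation relevant to the support condition $|\mu+t/|\xi+x||\le\rho$ in $\mathcal{T}_{j,\mu}$.
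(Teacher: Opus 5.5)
Your argument is correct and is essentially the paper's. Both set $t/|u|=-\mu$ and then align the direction of $u$ with a subspace attached to $A-\mu I$ so that the rank-one term cannot raise the rank: the paper takes $u\parallel q_1\in\mathcal{E}^{\perp}(\mu)$ (the row space of $A-\mu I$, arguing on rows after conjugating by $Q$ so the perturbation becomes $\mu E_1$), while you take $u\in\mathcal{C}(A-\mu I)$ and argue on columns. The two caveats you raise, $\text{sign}(t)=-\text{sign}(\mu)$ and $m<d$, are silently needed in the paper's proof as well. The reflection $\xi\to-\xi$ does not actually flip the sign of $t$, so restricting to that sign, as you finally do, is the right reading; only the existence of one such $t$ is used later.
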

\begin{proof}[Proof of (\ref{c44})]
Let $\mu\in\Lambda(A)\cap \mathbb{R}$. Then $\mu\in \Lambda(Q^TAQ)$  for all $Q\in SO_d(\mathbb{R})$ and    
\begin{align}\label{c48}
Q^T\mathcal{E}^{\perp}(\mu)=[Q^T\mathcal{E}(\mu)]^{\perp}=\mathcal{R}(Q^TAQ-\mu I)\ \text{as $\mathcal{R}$ indicates row space}
\end{align}
where the first equality follows from $\langle Q^Tw, Q^Tv\rangle =0$ for $Q^T\in SO_d(\mathbb{R})$ with $w\in \mathcal{E}^{\perp}(\mu)$ and $v\in  \mathcal{E}(\mu) $.  The   the second follows from  
  $(Q^TAQ-\mu I) Q^Tv=Q^T(A-\mu I)v={\bf 0}$ for $v\in\mathcal{E}(\mu)$.
Choose
\begin{align}\label{c50}
Q\in SO_d(\mathbb{R})\ \text{satisfying $q_1=Q{\bf e}_1\in  \mathcal{E}^{\perp}(\mu)$ }.
\end{align}
In such  $Q =(q_1|q_2|\cdots|q_d)$, take $u=(x+\xi)\parallel q_1$ to construct
  a  symmetric   matrix  $U$ satisfying (\ref{gamu1}) and (\ref{gamu2}) with $k=1$.  Take $\mu=-\frac{t}{|u|}$. 
Then in (\ref{gamu2}), 
\begin{align*}
 Q(Q^TAQ+\frac{t}{|u|} I- \frac{t}{|u|}E_1  )Q^T= Q(Q^TAQ-\mu I+\mu E_1  )Q^T. 
\end{align*}
By  (\ref{c50}), the first row of $\mu E_1$ is
  $\mu {\bf e}_1\in Q^T\mathcal{E}^{\perp}(\mu)=\mathcal{R}(Q^TAQ-\mu I)$ from (\ref{c48}). By this  
we have $\text{rank}(\phi''_{x\xi}(x,t,\xi))=\text{rank} (Q^TAQ-\mu I +\mu E_1)\le \text{rank} (Q^TAQ-\mu I )=\text{rank} (  A -\mu I )=d-m$.    \end{proof}
 
\begin{lemma}
For $v \in S^{d-1}$, let $\text{cone} (v):=\{tv :t\ge 0\}$ and  define the projection  $x \in\mathbb{R}^d$ to   the   ray  $\text{cone}(v)$ as
$P_{v}(x):=(x\cdot v )v$. 
Then we have
\begin{align}\label{0441}
\big| |x |-|P_v(x)|\big|\le \frac{|x-P_{v}(x) |^2 }{|x|}.
\end{align}
Suppose that $x,\xi\in \rm{cone}(v )+O(\eta)$  with $x\cdot v>0$ and $\xi\cdot v >0$. Then we have
\begin{align}\label{0442}
|x+\xi|=(x+\xi)\cdot v +O\left(\frac{\eta^2}{|x+\xi|}\right).
\end{align}
\end{lemma}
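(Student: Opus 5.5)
The plan is to reduce both estimates to the orthogonal decomposition of $x$ relative to the line spanned by $v$, and then use a single algebraic identity for $|x|-|P_v(x)|$.

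\textbf{Proof of (\ref{0441}).} Write $a:=x\cdot v$ and $w:=x-P_v(x)$. Then $w\perp v$, $|P_v(x)|=|a|$, and
$$|x|^2=a^2+|w|^2 .$$
For $x\neq 0$ we can factor the difference of squares:
$$|x|-|a|=\frac{|x|^2-a^2}{|x|+|a|}=\frac{|w|^2}{|x|+|a|}.$$
This quantity is nonnegative. Its denominator is at least $|x|$, so
$$0\le |x|-|P_v(x)|\le \frac{|x-P_v(x)|^2}{|x|},$$
which is (\ref{0441}). No step here is delicate.

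\textbf{Proof of (\ref{0442}).} We first interpret the hypothesis $x\in \mathrm{cone}(v)+O(\eta)$ in the sense of (\ref{30kg}): there is some $s\ge 0$ with $|x-sv|\le C\eta$. We then check that this forces $|x-P_v(x)|\le C\eta$.
\begin{itemize}
\item Since $x-P_v(x)\perp v$, Pythagoras gives $|x-sv|^2=|x-P_v(x)|^2+(x\cdot v-s)^2\ge |x-P_v(x)|^2$ for every real $s$.
\item The condition $x\cdot v>0$ guarantees that $P_v(x)$ itself lies on the ray $\mathrm{cone}(v)$. This matches the fact that the nearest point of the ray is $(x\cdot v)_+v$.
\end{itemize}
The same argument gives $|\xi-P_v(\xi)|\le C\eta$.

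Next we use linearity of $P_v$:
$$(x+\xi)-P_v(x+\xi)=\big(x-P_v(x)\big)+\big(\xi-P_v(\xi)\big),$$
so $|(x+\xi)-P_v(x+\xi)|\le 2C\eta$. Since $(x+\xi)\cdot v=x\cdot v+\xi\cdot v>0$, we have $|P_v(x+\xi)|=(x+\xi)\cdot v$ exactly, with no absolute value to remove. Applying (\ref{0441}) to the vector $x+\xi$ then yields
$$|x+\xi|-(x+\xi)\cdot v=O\!\left(\frac{\eta^2}{|x+\xi|}\right),$$
which is (\ref{0442}).

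The only point requiring care is the translation of ``$\in \mathrm{cone}(v)+O(\eta)$'' into the bound on $|x-P_v(x)|$. This is exactly where the sign conditions $x\cdot v>0$ and $\xi\cdot v>0$ are used: they ensure that the orthogonal projection onto the line coincides with the nearest point on the ray, and that $|P_v(x+\xi)|$ equals $(x+\xi)\cdot v$ rather than its absolute value.
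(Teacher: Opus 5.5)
Your proof is correct and follows the paper's route. The paper derives (\ref{0441}) from the right triangle with vertices $0,x,P_v(x)$, using $\sin\theta=|x-P_v(x)|/|x|$ and $|x|-|x|\cos\theta\le |x|\sin^2\theta$; this is the trigonometric form of your difference-of-squares identity. It then applies (\ref{0441}) to $x+\xi$ exactly as you do, with $|P_v(x+\xi)|=(x+\xi)\cdot v$ and $|x+\xi-P_v(x+\xi)|\lesssim 2\eta$.

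Your Pythagoras step makes explicit the bound $|x-P_v(x)|\le C\eta$, which the paper takes for granted. That bound holds without the sign conditions, so your closing remark slightly overstates their role: they are needed only to get $|P_v(x+\xi)|=(x+\xi)\cdot v$ and $x+\xi\neq 0$.
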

\begin{proof}
  Let $\theta$ be the angle between $x$ and its projection $P_v(x)=\langle x,v\rangle v$. Then, from the right triangle having vertices ${\bf 0},x, P_{v}(x)$, observe  that
$$\text{$\sin \theta=\frac{|x- P_{v}(x)|}{|x|} $ and}\  \big| |x |-|P_v(x)|\big|\le |x|(\sin\theta)^2.$$
This implies (\ref{0441}).  
Next, from the hypothesis,  one has $|P_v(x+\xi)|=(x+\xi)\cdot v$ and $x+\xi\in  \text{cone}(v)+O(2\eta)$, that is $|x+\xi-P_{v}(x+\xi)|<2\eta$.  Therefore, one can insert $x+\xi$ into $x$ in (\ref{0441}) to obtain (\ref{0442}).
\end{proof}
 \subsection{$L^2(dx dt)$ estimate} 
We shall prove  (\ref{0045}).  
Let  $\alpha\ge 0$. In order to have  \begin{align*} 
 \|\mathcal{A}_{S^{d-1}(A)}f\|_{L^2_{\alpha}(\mathbb{R}^{d+1}\times [1,2])} & \approx   \|[\partial^{\alpha}\mathcal{A}]f\|_{L^2(\mathbb{R}^{d+1}\times [1,2])}
\end{align*}
we assign the operator $f\rightarrow [\partial^{\alpha}\mathcal{A}]f$ to each $\alpha$, whose value at  $ (x,x_{d+1},t)$ is  
\begin{align*}
 &\int e^{2\pi i \langle (x,x_{d+1}),(\xi,\xi_{d+1})\rangle} m^{\alpha}(x,x_{d+1},t,\xi,\xi_{d+1})  \widehat{f}(\xi,\xi_{d+1})d\xi d\xi_{d+1}\ \text{with}\\
& m^{\alpha}(x,x_{d+1},t,\xi,\xi_{d+1}) =(|\xi|+|\xi_{d+1}|)^{\alpha}\widehat{d\sigma}(2\pi |\xi+\xi_{d+1}Ax|)\psi^c(\xi+\xi_{d+1}Ax) 
\end{align*}
where  we remove the region $|\xi+\xi_{d+1}Ax|\le 1$  since its average exhibits smoother behavior.
Apply the coordinate change  $\xi \rightarrow  \xi_{d+1}\xi$ and Plancherel theorem for $\xi_{d+1}=\lambda$. Next
  use the  asymptotic expansion (\ref{kc3}) with the  error term  ignored. Then we replace $  [\partial^{\alpha}\mathcal{A}]$ with the sum over $j$ and $\nu=\pm $ of the following dyadic pieces:
\begin{align*}
\mathcal{A}_{\alpha,j}^{\lambda,t,\nu}f(x):=\frac{ \lambda^{d/2}\lambda^{\alpha}\chi\left(t\right)   }{2^{(d-1)j/2}}  \int_{\mathbb{R}^d}a_\nu(1+|\xi|)^{  \alpha}
e^{2\pi i\lambda \phi^\nu(x,t,\xi)} 
\chi\left(\frac{\lambda t|\xi+x|}{2^j}\right)    \widehat{f}(  \xi)d\xi\end{align*} 
where $a_+$ and $a_-$ are the initial constants of (\ref{kc3}), and $\Phi^\nu$ are phases:
\begin{align*}
\phi^+(x,t,\xi)&=\langle Ax,\xi\rangle+ t|\xi+x|\ \text{and}\ 
\phi^-(x,t,\xi)=\langle Ax,\xi\rangle-t|\xi+x|.
\end{align*}
Denote $\mathcal{A}^{\lambda,\nu }_{\alpha,j}f(x,t)=\mathcal{A}^{\lambda,t,\nu }_{\alpha,j}f(x)$ in the above.  Then we have
 \begin{align}\label{n32}
   \sup_{\|f\|_{L^2(\mathbb{R}^{d+1})}=1}\frac{\|[\partial^{\alpha}\mathcal{A}]f\|_{L^2(\mathbb{R}^{d+1}\times [1,2])}}{\left\|
f\right\|_{L^2(\mathbb{R}^{d+1})}}  
& \approx  \left\|\sum_{j=1}^\infty \mathcal{A}_{\alpha,j}^{\lambda,+}  +\mathcal{A}_{\alpha,j}^{\lambda,-}  \right\|_{L^2(\mathbb{R}^{d})\rightarrow L^2(\mathbb{R}^d\times [1,2]) } \nonumber \\
& \le\sum_{\nu=\pm}\sum_{j=1}^\infty \left\| \mathcal{A}_{\alpha,j}^{\lambda,\nu}    \right\|_{L^2(\mathbb{R}^{d})\rightarrow L^2(\mathbb{R}^d\times [1,2]) } .
\end{align}

Assume that (i) $\Lambda(A)\cap \mathbb{R}=\emptyset$ ($m_A=1$) or (ii)   $\Lambda(A)\cap \mathbb{R}\ne \emptyset$ with $n(\mu)=1$ for every $\mu\in \Lambda(A)\cap \mathbb{R}$ with $m_A$ in (\ref{m11}). To prove (\ref{0045}), we shall show that
\begin{align}\label{0421}
\left\| \mathcal{A}_{\alpha,j}^{\lambda,+}    \right\|_{L^2(\mathbb{R}^{d})\rightarrow L^2(\mathbb{R}^d\times [1,2]) } \le C2^{-\epsilon j}\ \text{if $\alpha<\frac{d-1}{2}-\frac{m_A-1}{2}$}. 
\end{align}
since $\nu=-$ is similar.  The restriction $t\in [1,2]$ enables us to   localize $|x|\le 10$.  
By the support condition $|\xi+x|\le  2^j/\lambda $, we have $$|\xi|\lesssim 1\ \text{ if $\lambda\gtrsim 2^j$ and  $|\xi|\lesssim 2^j/\lambda$ if $2^j\gg \lambda$}$$
in the symbol of $\mathcal{A}_{\alpha,j}^{\lambda,t,+}$ above, which controls the derivatives part:  
\begin{align} \label{cem}
  (\lambda(1+|\xi|) )^{\alpha} \lesssim \begin{cases}  \lambda^{ \alpha}\ \text{if $\lambda\gtrsim 2^j$}\\
2^{j \alpha}\ \text{if $\lambda\ll 2^j$}.
\end{cases}
.\end{align}
Thus one can observe that
\begin{align}\label{k45}
\mathcal{A}_{\alpha,j}^{\lambda,+} \approx \frac{(\max\{|\lambda|,2^j\} )^{\alpha}}{2^{(d-1)j/2} }\mathcal{T}_j^{\lambda}\ \text{for $\mathcal{T}_j^{\lambda}$ defined in (\ref{5ppa})}.
\end{align}
This with the three bounds  of $\mathcal{T}_j^{\lambda}$: 
$$ \text{1 in (\ref{546}),    $2^{(m_A-1)j/2} $ in  (\ref{51}) and $  \left(\frac{2^j}{\lambda}\right)^{d/2}$ in (\ref{545}),}$$ respectively  gives  the following three bounds
 \begin{align*} 
  \left\|  \mathcal{A}_{\alpha,j}^{\lambda,+}\right\|_{L^2(\mathbb{R}^{d})\rightarrow L^2(\mathbb{R}^d\times [1,2])}&\lesssim \begin{cases} 
2^{-(d-1)j/2} 2^{j  \alpha} \ \text{if $\lambda\ll  2^j$}\\
2^{-(d-1)j/2} 2^{ j \alpha}    2^{(m_A -1)j/2}\ \text{if $\lambda\approx 2^j$}\\
2^{-(d-1)j/2} \lambda^{  \alpha}    \left(\frac{2^j}{\lambda}\right)^{d/2}\le 2^{-(d-1)j/2} 2^{j\alpha}\ \text{if $\lambda\gg 2^j$}
\end{cases}
\end{align*}
all of which are  bounded by $ 2^{-\epsilon j}$ if
$
 \alpha= (d-1)/2-  (m_A-1)/2- \epsilon$ for some $\epsilon 
> 0$ 
showing  (\ref{0421}).   The case $\nu=-$ is similarly obtained.

 \subsection{$L^2(dx )$ estimate of skew symmetric $A$ for  fixed $t$}
By the Plancherel theorem with respect to $\xi_{d+1}$,  we have for each fixed $t>0$,
\begin{align*} 
\sup_{f\in L^2(\mathbb{R}^{d+1})}\frac{\|\mathcal{A}_{S^{d-1}(A)}f(\cdot,t)\|_{L^2_{\alpha}(\mathbb{R}^{d+1})}}{\left\|
f\right\|_{L^2(\mathbb{R}^{d+1})}}&\approx \sup_{\lambda }  \left\|\sum_{j=1}^\infty \mathcal{A}_{\alpha,j}^{\lambda,t,+}  +\mathcal{A}_{\alpha,j}^{\lambda,t,-}  \right\|_{L^2(\mathbb{R}^{d})\rightarrow L^2(\mathbb{R}^d) } \nonumber\\
& \le\sup_{\lambda } \sum_{\nu=\pm}\sum_{j=1}^\infty \left\| \mathcal{A}_{\alpha,j}^{\lambda,t,\nu}    \right\|_{L^2(\mathbb{R}^{d})\rightarrow L^2(\mathbb{R}^d) } .
\end{align*}
We shall first assume that $A$ is skew symmetric and prove the sufficient part below (\ref{a245}).  For this,  we shall prove that for $\alpha=\frac{d-1}{2}-\frac{1}{6}-\epsilon$, 
\begin{align}\label{1k3}
  \|   \mathcal{A}_{\alpha,j}^{\lambda,t,+}   \|_{L^2(\mathbb{R}^d)\rightarrow L^2(\mathbb{R}^d)}&\le C 2^{-\epsilon j}.
\end{align}
Under the condition of $\lambda\lesssim 2^j$, we have
\begin{align} \label{1k1}
  \left\|   \mathcal{A}_{\alpha,j}^{\lambda,t,+}  \right\|_{L^2(\mathbb{R}^{d})\rightarrow L^2(\mathbb{R}^d)}&\lesssim 
2^{j  \alpha} 2^{-(d-1)j/2}\le 2^{-j/6}.
\end{align}
\begin{proof}[Proof of (\ref{1k1})]
The case $\lambda\ll 2^j$ follows from (\ref{cem}),(\ref{k45}) and the second part of (\ref{546}).  The case $\lambda\approx 2^j$    from the H\"{o}rmander theorem with (\ref{449}) and (\ref{cem}).
\end{proof}
For the case $\lambda\gg 2^j$,  we claim that  there is $C$ independent of $\lambda$ such that
\begin{align} \label{1k2}
 \left\|   \mathcal{A}_{\alpha,j}^{\lambda,t,+}  \right\|_{L^2(\mathbb{R}^{d})\rightarrow L^2(\mathbb{R}^d)}&\le C 2^{-\epsilon j}\ \text{for $\alpha=\frac{d-1}{2}-\frac{1}{6}-\epsilon$}.
\end{align}
\begin{proof} [Proof of (\ref{1k2})]
By using the same dilation in (\ref{445}) and (\ref{k45}),   we work with
\begin{align*}
\mathcal{A}_{\alpha,j}^{\lambda,t,+} g( x ) &  =\left(\frac{2^j}{\lambda}\right)^{d}  \frac{ \lambda^{d/2}\lambda^{\alpha} }{2^{(d-1)j/2}} \chi\left(t\right)  \int_{\mathbb{R}^d}
e^{2\pi i \frac{2^{2j}}{\lambda}\left(  \langle Ax, \xi \rangle+t
\frac{\lambda}{2^j}|x+\xi|\right)  }
 \chi\left(  t|\xi+x|\right) \widehat{f
 }(  \xi)d\xi.
\end{align*} 
By writing $\alpha=\frac{d-1}{2}-a$ with $a=1/6+\epsilon$  in front of the above integral,
$$\left(\frac{2^j}{\lambda}\right)^{d}  \frac{ \lambda^{d/2}\lambda^{\alpha} }{2^{(d-1)j/2}}=\frac{2^{(d+1)j/2}}{\lambda^{1/2+a}}.$$
By applying the  H\"{o}rmander theorem for the phases in the two ways:
\begin{itemize}
\item $\det\left(\text{the mixed hessian of $\langle Ax, \xi \rangle+t
\frac{\lambda}{2^j}|x+\xi|$}\right)  \gtrsim  1+|\frac{\lambda}{2^j}|^{d-2}$ in (\ref{449}) 
\item   rank $d-1$ of the mixed hessian of $\frac{2^j}{\lambda}\langle Ax, \xi \rangle+t|x+\xi|$      with $\frac{2^j}{\lambda}\ll 1$,
\end{itemize}
we obtain that
\begin{align*}
 \| \mathcal{A}_{\alpha,j}^{\lambda,t,+} \|_{L^2(\mathbb{R}^d)\rightarrow L^2(\mathbb{R}^d)}\le &\frac{2^{(d+1)j/2}}{\lambda^{1/2+a}}   \min\left\{ \left(\left(\frac{2^{2j}}{\lambda}\right)  \left(\frac{\lambda}{2^{j}}\right)^{\frac{d-2}{d}}\right)^{-d/2}, 2^{-\frac{(d-1)j}{2}}\right\}  \\
&=\frac{2^{(d+1)j/2}}{\lambda^{1/2+a}} \min\left\{  2^{- (d+2)j/2 }\lambda, 2^{-\frac{(d-1)j}{2}}\right\} \le 2^{-3\epsilon j /2}.
\end{align*}
So, we obtain (\ref{1k2}).
\end{proof}
\subsection{Proof of $(\ref{a245})$ for $\Lambda(A)\cap \mathbb{R}= \emptyset$}
Let $ \mathcal{A}_{\alpha}^{\lambda,t,+} +\mathcal{A}_{\alpha}^{\lambda,t,-}=\sum_{j=1}^\infty \mathcal{A}_{\alpha,j}^{\lambda,t,+}  +\mathcal{A}_{\alpha,j}^{\lambda,t,-}  $. 
It suffices to fix $t=1$ and  prove that 
  \begin{align} \label{k567}
  \sup_{\lambda }  \left\| \mathcal{A}_{\alpha}^{\lambda,1,+} +\mathcal{A}_{\alpha}^{\lambda,1,-} \right\|_{L^2(\mathbb{R}^{d})\rightarrow L^2(\mathbb{R}^d) } =\infty\ \text{for $\alpha=\frac{d-1}{2}-1/6+\epsilon$}.
\end{align}
\begin{proof}[Proof of (\ref{k567})]
Take $\lambda\gg 1$. We shall choose $f$ and $g$ for computing
\begin{align*}
\langle \mathcal{A}_{\alpha}^{\lambda,1,+} f, g\rangle &  =   \lambda^{d/2}\lambda^{\alpha}   \int_{\mathbb{R}^d}
e^{2\pi i \lambda\left(  \langle Ax, \xi \rangle+ 
 |x+\xi|\right)  }
 \frac{1}{|\lambda (\xi+x)|^{(d-1)/2}} \widehat{f
 }(  \xi)g(x)d\xi dx
\end{align*} 
where $\lambda^{d/2}\lambda^{\alpha}=\lambda^{d-2/3+\epsilon} $. Fix $v\in S^{d-1}$ and $0<\rho\ll 1$.
Set   two functions
\begin{align*}
\widehat{f}( \xi)&:=(\lambda^{d/3})^{1/2}e^{-2\pi i \lambda   \langle \xi, v\rangle } \psi\left( \lambda^{2/3} |\xi-P_{v}(\xi) | /\rho\right)\prod_{i}\chi(\lambda^{1/3}\xi_i)\\
g( x)&:=(\lambda^{d/3})^{1/2}e^{-2\pi i \lambda \langle x, v\rangle } \psi\left(\lambda^{2/3} |\xi-P_{v}(\xi) |/\rho \right)\prod_{i}\chi(\lambda^{1/3}x_i).
\end{align*} 
Notice they have the same $L^2(\mathbb{R}^d)$ norms of
\begin{align*}
(\lambda^{-d/3})^{1/2}\widehat{f}(\lambda^{-1/3}\xi)&:=e^{-2\pi i \lambda^{2/3} \langle \xi, v\rangle } \psi\left(\frac{ |\xi-P_{v}(\xi) |}{\lambda^{-1/3}\rho}\right)\prod_{i}\chi( \xi_i)\\
(\lambda^{-d/3})^{1/2}g(\lambda^{-1/3}x)&:=e^{-2\pi i \lambda^{2/3} \langle x, v\rangle }\psi\left(\frac{ |x-P_{v}(x) |}{\lambda^{-1/3}\rho}\right)\prod_{i}\chi(x_i).
\end{align*}
Then by change of variable $\xi,x\rightarrow \lambda^{-1/3}\xi, \lambda^{-1/3}x$,
\begin{align*}
&\langle \mathcal{A}_{\alpha}^{\lambda,1,+} f, g\rangle   =  \frac{\lambda^{d-2/3+\epsilon}  \lambda^{-d/3} }{\lambda^{(d-1)/3}}  \int_{\mathbb{R}^d}
e^{2\pi i \left( \lambda^{1/3} \langle Ax, \xi \rangle+ 
  \lambda^{2/3}|x+\xi| \right)  } \frac{1}{|  (\xi+x)|^{(d-1)/2}}
  \\
&\times e^{-2\pi i  \lambda^{2/3}  \langle \xi, v\rangle  } \psi\left(\frac{ |\xi-P_{v}(\xi) |}{\lambda^{-1/3}\rho}\right)\prod_{i}\chi( \xi_i) 
e^{-2\pi i \lambda^{2/3} \langle x, v\rangle  }\psi\left(\frac{ |x-P_{v}(x) |}{\lambda^{-1/3}\rho}\right)\prod_{i}\chi( x_i)d\xi dx.
\end{align*}
where $|x+\xi|\approx 1$. Then we can rewrite it as  
\begin{align*}
\langle \mathcal{A}_{\alpha}^{\lambda,1,+} f, g\rangle  & \approx   \lambda^{(d-1)/3}   \lambda^{\epsilon}     \int_{\mathbb{R}^d}
e^{2\pi i \left( \lambda^{1/3} \langle Ax, \xi \rangle+ 
 \lambda^{2/3}(|x+\xi|-\langle (x+\xi), v\rangle  )\right)  }
 \\
&\times  \psi\left(\frac{ |\xi-P_{v}(\xi) |}{\lambda^{-1/3}\rho}\right)\prod_{i}\chi( \xi_i)
 \psi\left(\frac{ |x-P_{v}(x) |}{\lambda^{-1/3}\rho}\right)\prod_{i}\chi( x_i)d\xi dx.
\end{align*}
In the above,    $|\frac{ |x-P_{v}(x) |}{\lambda^{-1/3}\rho}|\le 1$ if and only if $x\in \text{cone}(v)+O(\rho \lambda^{-1/3})$.  Thus, $\langle Ax,\xi\rangle=\langle A(x-P_{v}(x)),\xi\rangle+ \langle A(P_{v}(x)),\xi-
P_{v}(\xi)\rangle=O(\lambda^{-1/3}\rho)$ because the skew symmetry of $A$ implies $ \langle A(P_{v}(x)), 
P_{v}(\xi)\rangle=\langle A(v),v\rangle =0$. This  combined with the good approximation $|x+\xi|-\langle (x+\xi),v\rangle=O(\eta^2)$   in (\ref{0442}) for $\eta=\lambda^{-1/3}\rho$, eliminates any oscillation in the above integral.
Therefore, one can obtain that
$$ \frac{|\langle \mathcal{A}_{\alpha}^{\lambda,1,+} f, g\rangle|}{\|f\|_{L^2(\mathbb{R}^d)}  \|g\|_{L^2(\mathbb{R}^d)} } \approx\frac{\lambda^{(d-1)/3} \lambda^{\epsilon} \lambda^{-2(d-1)/3} }{\lambda^{-(d-1)/3}\rho^{d-1} }  =\frac{\lambda^{\epsilon}}{\rho^{d-1}}\rightarrow \infty$$
as $\lambda\rightarrow \infty$. On the other hand,
\begin{align*}
\langle \mathcal{A}_{\alpha}^{\lambda,1, -} f, &g\rangle   =   \lambda^{(d-1)/3} \lambda^{\epsilon}   \int_{\mathbb{R}^d}
e^{2\pi i \left(\lambda^{1/3} \langle Ax, \xi \rangle- 
  \lambda^{2/3}(|x+\xi|+\langle (x+\xi), v\rangle)\right)  }
 \\
&\times  \psi\left(\frac{ |\xi-P_{v}(\xi) |}{\lambda^{-1/3}\rho}\right)\prod_{i}\chi( \xi_i)
 \psi\left(\frac{ |x-P_{v}(x) |}{\lambda^{-1/3}\rho}\right)\prod_{i}\chi( x_i)d\xi dx=O(\lambda^{-2N/3})
\end{align*}
since the derivatives $v\cdot\nabla_x $ and $v\cdot\nabla_\xi $  of $ |x+\xi|+\langle (x+\xi), v\rangle v$ are non-vanishing. Therefore by taking $\lambda$ large enough in the above, we obtain (\ref{k567}).
\end{proof}
\subsection{Proof of (\ref{a245}) for $\Lambda(A)\cap \mathbb{R}\ne \emptyset$}
Suppose that the maximal algebraic multiplicity of $A$ is $m_A$. By (\ref{c44}), it suffices to assume that $\text{rank}(\phi''_{x\xi}(x_0,t,\xi_0)= d-m_A$.
For $ \alpha_0:=\frac{d-1}{2}-\frac{m_A-1}{2}- \frac{1}{6}$, we claim that there exists $t\ne 0$   such that
\begin{align*} 
 \sup_{\lambda\ne 0}  \left\| \mathcal{A}_{\alpha}^{\lambda,t,+}  +\mathcal{A}_{\alpha}^{\lambda,t,-}  \right\|_{L^2(\mathbb{R}^{d})\rightarrow L^2(\mathbb{R}^d) }=\infty\ \text{if $\alpha>\alpha_0$. }
\end{align*}
Take $\lambda \gg 1$ and fix $t\ne 0$. We shall show $\|\mathcal{A}_{\alpha}^{\lambda,t,+}  +\mathcal{A}_{\alpha}^{\lambda,t,-}  \|_{op}\gtrsim\lambda^{-\alpha_0+\alpha}$.  Suppose that $\mu\in \Lambda(A)\cap \mathbb{R}$ and  $m_A=\text{dim}(\mathcal{E}(\mu))$.
Then by the proof of (\ref{c44}), there are  $x_0,\xi_0\approx 1$ with $|x_0+\xi_0|\approx 1$ such that $\frac{-t_0}{|x_0+\xi_0|}=\mu$ and $\text{rank}(\phi''_{x\xi}(x_0,t,\xi_0))\le d-m_A$.  We may assume that $\text{rank}(\phi''_{x\xi}(x_0,t,\xi_0)= d-m_A$.
Thus,
one can apply the coordinate change $(x,\xi)\rightarrow (Qx,Q\xi)$ so that $(Q[x]_0,Q[\xi]_0)=(x_0,\xi_0)$ and $[\phi(Qx,t,Q\xi)]''_{x\xi}([x]_0,[\xi]_0)=Q^T
[\phi]''_{x\xi}(x_0,t,y_0)Q$ is a diagonal matrix whose diagonal vector is $(\beta_1,\cdots,\beta_{d-m_A},0,\cdots,0)$. For simplicity, we regard $Q=I$ and $\phi^+(x_0,t,\xi_0)=0$.  By  Taylor expansion near $(x_0,\xi_0)$ for $x_0=(x_{s0})_s,\xi_0=(\xi_{s0})_s$,  
\begin{align*}
\phi^+(x,t,\xi)&=( A^T\xi_0+t\frac{x_0+\xi_0}{|x_0+\xi_0|})\cdot (x-x_0) +(Ax_0+t\frac{x_0+\xi_0}{|x_0+\xi_0|})\cdot (\xi-\xi_0) \\
&+\sum_{s=1}^{d-m_A} \beta_s(x_s-x_{s0})(\xi_s-\xi_{s0})
+O(|x-x_0|^2|\xi-\xi_0| +|\xi-\xi_0|^2|x-x_0|).
\end{align*}
Take a cutoff function $\psi(u)$ supported in $|u|\ll 1$ and choose $g(x):=g_{\text{phase}}(x)g_{\text{supp}}(x)$ and $\widehat{f}(\xi):=\widehat{f}_{\text{phase}}(\xi) \widehat{f}_{\text{supp}}(\xi)$:
\begin{align*}
g_{\text{phase}}(x)&:=e^{-2\pi i   ( A^T\xi_0+t_0\frac{x_0+\xi_0}{|x_0+\xi_0|})\cdot (x-x_0)}\\
g_{\text{supp}}(x)&:=\prod_{s=1}^{d-m_A}\psi((x_s-x_{s0})\lambda^{1/2})\prod_{s=d-m_A+1}^{d}\psi((x_s-x_{s0})\lambda^{1/3})\\
\widehat{f}_{\text{phase}}(\xi)
 &:=e^{-2\pi i   ( Ax_0+t_0\frac{x_0+\xi_0}{|x_0+\xi_0|})\cdot (\xi-\xi_0)}\\
\widehat{f}_{\text{supp}}(\xi)&:=
\prod_{s=1}^{d-m_A}\psi((\xi_s-\xi_{s0})\lambda^{1/2})\prod_{s=d-m_A+1}^{d}\psi((\xi_s-\xi_{s0})\lambda^{1/3}).
\end{align*}
Then $\int g_{\text{supp}}(x) dx=   \int \widehat{f}_{\text{supp}}(\xi) d\xi\approx \lambda^{-\frac{1}{2}(d-m_A)} \lambda^{-\frac{1}{3}m_A}$.
Observe that the linear terms in the phase $\phi^+$ are absorbed into the phase parts of $g$ and $\widehat{f}$ of  the integral $|\langle \mathcal{A}_{\alpha}^{\lambda,t,+}f,g\rangle|$ which is expressed as
\begin{align*}
& \frac{\lambda^{d/2}\lambda^{ \alpha}}{\lambda^{(d-1)/2}}\left|\int   \frac{  e^{  2\pi i \lambda( \sum_{s=1}^{d-m_A} \beta_s(x_s-x_{s0})(\xi_s-\xi_{s0})
+\text{error term})  }     }{  (|\xi|+ 1)^{-\alpha} |\xi+x|^{(d-1)/2}}g_{\text{supp}}(x)
\widehat{f}_{\text{phase}}(\xi)
dx d\xi \right|\\
&\approx \frac{\lambda^{d/2}\lambda^{ \alpha}}{\lambda^{(d-1)/2}}\int g_{\text{supp}}(x) dx\times \int \widehat{f}_{\text{supp}}(\xi) d\xi.
\end{align*}
However, the linear terms in the phase $\Phi^-$ remains. This enables us to  apply integration by parts and obtain that  
$|\langle \mathcal{A}_{\alpha}^{\lambda,t,-} f,g\rangle|$ is
\begin{align*}
& \frac{\lambda^{d/2}\lambda^{ \alpha}}{\lambda^{(d-1)/2}}\big|\int   
\frac{e^{2\pi i \lambda \left(2t \frac{(x_0+\xi_0)}{|x_0+\xi_0|}
\cdot [(x-x_0)+(\xi-\xi_0)]+ \sum_{s=1}^{d-m_A} \beta_s(x_s-x_{s0})(\xi_s-\xi_{s0})
+\text{error term})    \right)}}{(|\xi|+ 1)^{-\alpha} |\xi+x|^{(d-1)/2}}  \\
& \times g_{\text{supp}}(x)
\widehat{f}_{\text{phase}}(\xi)
dx d\xi  =O(\lambda^{-N})
\end{align*}
where $ \frac{(x_0+\xi_0)}{|x_0+\xi_0|} \cdot \nabla_x (\text{the above phase})\approx 1$ and $ \frac{(x_0+\xi_0)}{|x_0+\xi_0|} \cdot \nabla_\xi(\text{the above phase})\approx 1$ due to the linear terms. Thus  $|\langle \mathcal{A}_{\alpha}^{\lambda,t,+}f,g\rangle|$ is the main term showing for $\alpha>\alpha_0$,
\begin{align*}
\frac{|\langle (\mathcal{A}_{\alpha}^{\lambda,t,+}+\mathcal{A}_{\alpha}^{\lambda,t,-})f,g\rangle|}{\|f\|\|g\|}&\approx \frac{\lambda^{d/2}\lambda^{ \alpha}}{\lambda^{(d-1)/2}}\lambda^{-\frac{1}{2}(d-m_A)} \lambda^{-\frac{1}{3}} \approx \lambda^{\frac{1}{2}-\frac{1}{2}(d-m_A)-\frac{1}{3} +\alpha}=\lambda^{\alpha-\alpha_0}
\end{align*}
 which goes to $\infty$ as $\lambda\rightarrow \infty$. Therefore we proved (\ref{a245}). 
\section{Appendix}\label{sec10}
\subsection{Proof of (\ref{kcc2})}
By applying the standard majorization argument, it suffices to obtain  an upper bound $C_p$ of the  norm  in  $L^p(\mathbb{R}^{d+1})\rightarrow L^p(\mathbb{R}^{d+1})$ for the maximal operator $f\rightarrow \sup_{k\in\mathbb{Z}}| \mathcal{W}_k(f)|$ for every $p>1$, where
\begin{align}
 \mathcal{W}_k(f)(x,x_{d+1})=2^{kd}\int_{|y|\le 2^{-k}} f(x-y,x_{d+1}-\langle Ax, y\rangle)dy.  \label{2.3q}
\end{align}
To restrict the frequency of the last variable,  we take $$\Phi_k(y,y_{d+1})=\delta(y)2^{2k}\psi^{\vee}(2^{2k}y_{d+1})\  \text{and}\ \Psi_{k+\ell}(y,y_{d+1})=\delta(y)2^{2(k+\ell)}\chi^{\vee}(2^{2(k+\ell)}y_{d+1}). $$
Then from the identity
$1\equiv \psi(\xi_{d+1}/2^{2k})+\sum_{\ell=1}^{\infty} \chi(\xi_{d+1}/2^{2(k+\ell)})$,
 one has
\begin{align}\label{00kk}
\sup_{k\in\mathbb{Z}} | \mathcal{W}_kf | 
\le    \sup_{k\in\mathbb{Z}}  |\mathcal{W}_k(\Phi_k\circ_A f)|+ \sum_{\ell=1}^\infty \left( \sum_{k\in\mathbb{Z}}|\mathcal{W}_k( \Psi_{k+\ell} \circ_A f)|^2\right)^{1/2}.
\end{align}
Using the Vitali covering for  rectangles of sizes $2^{-k}\times\cdots 2^{-k}\times 2^{-2k}$ in $\mathbb{G}_A \cong \mathbb{R}^{d+1}$,
\begin{align*} 
\|\sup_k   |\mathcal{W}_k(\Phi_k\circ_A f)| \,\|_{L^{1,\infty}(\mathbb{R}^{d+1})}\le C\|f\|_{L^1(\mathbb{R}^{d+1})}.
\end{align*}
This with its $L^{\infty}$-operator norm $\approx 1$ enables us to apply the Marcinkiewicz interpolation theorem to obtain the desired $L^p$ bound of the  maximal operator 
\begin{align}\label{000kk}
f\rightarrow \sup_k   |\mathcal{W}_k(\Phi_k\circ_A f)|\ \text{in (\ref{00kk}).}
\end{align}
Next,   for $\ell\ge 1$, one can obtain the square function estimates:
\begin{align}\label{2bd1}
\|\left(\sum_{k}|\mathcal{W}_k(\Psi_{k+\ell}\circ_A f)|^2\right)^{1/2}\|_{L^{2}(\mathbb{R}^{d+1})}&\le C2^{-d\ell }\|f\|_{L^2(\mathbb{R}^{d+1})}\ \end{align}
from  the disjointness of the supports  of $ \chi(\xi_{d+1}/2^{2(k+\ell)})$   over all $k$'s, combined with the bound $\|T^{\lambda}_{\ell} \|_{op}=O(2^{-d\ell})$ of the operator $g\rightarrow T^{\lambda}_\ell(g)$ with $\lambda=\xi_{d+1}$, defined by
$$T^{\lambda}_\ell g(x):= \lambda^{d/2}\left(\frac{2^{k}}{\lambda}\right)^d\int e^{2\pi i \lambda (\frac{\lambda}{2^k})^2 \langle A^{-1}x,\xi\rangle}\chi(x+\xi)\widehat{g}(\xi)d\xi\  $$
where 
the norm $2^{-d\ell}$ follows directly from  Schur's test and $\lambda\sim 2^{2k+2\ell}$. Thus one can combine (\ref{2bd1}) with the $L^2$ boundedness of the maximal operator  in (\ref{000kk})  to show the $L^2(\mathbb{R}^{d+1})$ boundedness of $f\rightarrow \sup_k|\mathcal{W}_k(f)|$ in (\ref{00kk}).
Moreover, for $4/3<p\le 2$, we apply 
\begin{itemize}
\item the vector valued inequality  in $L^p(\ell^2)$  for $(f_k)_k\rightarrow (\mathcal{W}_k(f_k))_k$ due to the $L^2(\mathbb{R}^{d+1})$ boundedness of $f\rightarrow \sup_k|\mathcal{W}_k(f)| $ and  
\item the Littlewood-Paley inequality for $f\rightarrow \Psi_{k+\ell}\circ_A f$ 
\end{itemize}
to have
\begin{align}\label{pbd2}
\|\left(\sum_{k}|\mathcal{W}_k(\Psi_{k+\ell}\circ_A f)|^2\right)^{1/2}\|_{L^{p}(\mathbb{R}^{d+1})}
&\le C \|f\|_{L^p(\mathbb{R}^{d+1})}. \end{align}
 This with (\ref{2bd1}) leads the $L^p(\mathbb{R}^{d+1})$ boundedness of $f\rightarrow \sup_k|\mathcal{W}_k(f)|$ for  $4/3<p\le 2$.    Repeat the argument to obtain $L^p$ boundedness of the maximal operator for the full range $1<p\le 2$.\subsection{Proof of Lemma \ref{prop23} }\label{app11.2}
  For   $(j,k)\in\mathbb{Z}_+\times\mathbb{Z}$,   we claim (\ref{yb1})  with $ 
  \tilde{\mathcal{A}}_{j,k}$ 
  $$ \tilde{\mathcal{A}}_{j,k}f(x,x_{d+1},t)=\int e^{2\pi i(x,x_{d+1})\cdot (\xi,\xi_{d+1})}m_{j,k}(x,x_{d+1},t,\xi,\xi_{d+1})\widehat{f}(\xi,\xi_{d+1})d\xi d\xi_{d+1}$$ with $m_{j,k}(x,x_{d+1},t,\xi,\xi_{d+1})$ given by $\chi\left(\frac{t|\xi+\xi_{{d+1}}Ax|}{2^j} \right)\chi(2^kt) $ times like  $\widehat{d\tilde{\sigma}}(2\pi t(\xi+\xi_{d+1}Ax)  $ as in (\ref{i7}).  
Given $p\ge 1$, we set the dilations as
\begin{align}
[\mathcal{D}^{p}_{2^{-k}}g](x,x_{d+1},t)&=2^{-(d+3)k/p}g(2^{-k}x,2^{-2k}x_{d+1},2^{-k}t),\label{4bv}\\
[D^{p}_{2^{-k}}f](x,x_{d+1})&=2^{-(d+2)k/p}f(2^{-k}x,2^{-2k}x_{d+1})\nonumber
\end{align}
satisfying the following two  $L^p$-invariance conditions:
\begin{align}\label{nk}
\|g\|_{L^p(\mathbb{R}^{{d+1}}\times \mathbb{R})}=
\|\mathcal{D}^{p}_{2^{-k}}g\|_{L^p(\mathbb{R}^{{d+1}}\times
 \mathbb{R})}\ \text{and}\ \|f\|_{L^p(\mathbb{R}^{{d+1}})}=
 \|D_{2^{-k}}f\|_{L^p(\mathbb{R}^{{d+1}})}.
 \end{align}
In (\ref{i7}),   we can check that
\begin{align}\label{i0}
m_{j,k}(2^{-k}x,2^{-2k}x_{d+1},2^{-k}t,2^{k}\xi,2^{2k}\xi_{d+1})= m_{j,0}( x, x_{d+1}, t,\xi,\xi_{d+1}).
\end{align}
Then     it holds that
\begin{align}
&\mathcal{D}^{p}_{2^{-k}}[2^{k/p} \tilde{\mathcal{A}}_{j,k} f](x, x_{d+1},t)=   \tilde{\mathcal{A}}_{j,0} D^p_{2^{-k}} f(x,x_{d+1},t).
\label{nka}\\
&\mathcal{D}^{p}_{2^{-k}}[2^{k/p} \tilde{\mathcal{A}}_{j,k} \mathcal{P}_{j+k+\ell}f](x, x_{d+1},t)  
= \tilde{\mathcal{A}}_{j,0}\mathcal{P}_{j+\ell}D^p_{2^{-k}} f(x, x_{d+1},t).\label{nka11}
\end{align}
\begin{proof}[Proof of (\ref{nka})]
We show (\ref{nka}) for the symbols $m_{j,k}$ satisfying   (\ref{i0}). 
By using $ \mathcal{D}^{p}_{2^{-k}}$ in (\ref{4bv}), we write
\begin{align*}
 \mathcal{D}^{p}_{2^{-k}}[2^{k/p}\tilde{\mathcal{A}}_{j,k}  f](x, x_{d+1},t)&= 2^{-(d+2)k/p}      \int
e^{2\pi i\left((2^{-k}x,2^{-2k}x_{d+1})\cdot(\xi,\xi_{d+1}) \right)}\nonumber\\
&\times m_{j,k}(2^{-k}x,2^{-2k}x_{d+1},2^{-k}t, \xi, \xi_{d+1})
\widehat{f}( \xi, \xi_{d+1})d\xi d\xi_{d+1}.
 \end{align*}
This integral after the change of variable $ (\xi,\xi_{d+1}) \rightarrow (2^{k}\xi,2^{2k}\xi_{d+1}) $ becomes
\begin{align*}
&   2^{-(d+2)k/p}    2^{(d+2)k/p}  \int
e^{2\pi i\left((2^{-k}x,2^{-2k}x_{d+1})\cdot(2^k\xi,2^{2k}\xi_{d+1}) \right)}\nonumber\\
&\times m_{j,k}(2^{-k}x,2^{-2k}x_{d+1},2^{-k}t, 2^k\xi, 2^{2k}\xi_{d+1})
 2^{(d+2)k/p'} \widehat{f}(2^k\xi, 2^{2k}\xi_{d+1})d\xi d\xi_{d+1}.
 \end{align*}
Hence, by $   2^{(d+2)k/p'} \widehat{f}(2^k\xi, 2^{2k}\xi_{d+1}) =\left[D^p_{2^{-k}} f\right]^{\vee}
 (\xi,\xi_{d+1}) $ due to  (\ref{4bv}), and (\ref{i0}),
\begin{align*}
 & \mathcal{D}^{p}_{2^{-k}}[2^{k/p} \tilde{\mathcal{A}}_{j,k}  f](x, x_{d+1},t)\\
 &\qquad\quad= \int
 e^{2\pi i\left((x,x_{d+1})\cdot(\xi,\xi_{d+1}) \right)}
m_{j,0}(x,x_{d+1},t, \xi, \xi_{d+1}) \left[D^p_{2^{-k}} f\right]^{\vee}
 (\xi,\xi_{d+1})d\xi d\xi_{d+1}
 \end{align*}
which is $\tilde{\mathcal{A}}_{j,0} D^p_{2^{-k}} f(x,x_{d+1},t).$  Thus we proved (\ref{nka}).  \end{proof}
\begin{proof}[Proof of (\ref{nka11})]
Apply the dilation (\ref{4bv}) to   $ \mathcal{P}_{j+\ell+k}f$ and obtain  the identity
\begin{align*} 
D^p_{2^{-k}} \mathcal{P}_{j+\ell+k}f(y,y_{d+1})=\mathcal{P}_{j+\ell}D^p_{2^{-k}} f(y,y_{d+1}).
\end{align*}
This with $\mathcal{D}^{p}_{2^{-k}}[2^{k/p} \tilde{\mathcal{A}}_{j,k}]  =  \tilde{\mathcal{A}}_{j,0} D^p_{2^{-k}}$ in  (\ref{nka}) implies $\mathcal{D}^{p}_{2^{-k}}[2^{k/p} \tilde{\mathcal{A}}_{j,k} \mathcal{P}_{j+k+\ell}f] =  [\tilde{\mathcal{A}}_{j,0} D^p_{2^{-k}} \mathcal{P}_{j+k+\ell}f] 
= \tilde{\mathcal{A}}_{j,0} \mathcal{P}_{j+\ell}D^p_{2^{-k}} f$ in (\ref{nka11}).
\end{proof}
  From (\ref{nka}),(\ref{nka11}) with (\ref{nk}), we obtain (\ref{yb1}).

\subsection{Proof of Lemma \ref{prop1}}\label{Sec114}
Recall the Littlewood-Paley  projection $\mathcal{P}_jf$:
\begin{align*}
&\mathcal{P}_jf(y,y_{{d+1}})\\
&=\int_{(\eta,\eta_{d+1})\in \mathbb{R}^d\times \mathbb{R}}e^{2\pi i (\eta,\eta_{{d+1}})\cdot (y,y_{{d+1}}) }P_j(\eta+\eta_{d+1}Ay,\eta_{d+1})\widehat{f}(\eta,\eta_{{d+1}})d\eta d\eta_{{d+1}}
\end{align*}
where $P_j(\eta,\eta_{d+1})=\psi\left( \frac{\eta}{2^{j+1}},\frac{\eta_{d+1}}{2^{2(j+1)}}\right)- \psi\left( \frac{\eta}{2^{j}},\frac{\xi_{d+1}}{2^{2j}}\right)$  in (\ref{10061}). We only prove (\ref{00453}), since (\ref{045312}) is standard.
From (\ref{yb1}), it suffices to show 
\begin{align}
\left\|  \tilde{\mathcal{A}}_{j,0}\mathcal{P}_{j+\ell} \right\|_{ L^2(\mathbb{R}^{d+1})\rightarrow L^2(\mathbb{R}^{d+1}\times \mathbb{R})} &\lesssim 2^{-c|\ell|}  \ \text{for $|\ell|\ge 100d   j$,} \label{045b}
\end{align}
\noindent
We split $\tilde{\mathcal{A}}_{j,0}=\mathcal{A}^{1,\ell}_{j,0}+\mathcal{A}^{2,\ell}_{j,0}$ having  symbols $m_{j,0}=a^1_{j,\ell}+a^2_{j,\ell}$
  where
\begin{align*} 
a^1_{j,\ell} (x,x_{d+1},t,\xi,\xi_{d+1})&=  \psi\left(\frac{|\xi_{d+1}|}{ 2^{|\ell|/10} }\right)\widehat{d\tilde{\sigma}}(2\pi  t(\xi+\xi_{{d+1}}Ax)\chi(t)  \chi\left(\frac{t|\xi+\xi_{{d+1}}Ax|}{2^j} \right) ,\\
a^2_{j,\ell} (x,x_{d+1},t,\xi,\xi_{d+1})&=\psi^c\left(\frac{|\xi_{d+1}|}{ 2^{|\ell|/10} }\right)\widehat{d\tilde{\sigma}}(2\pi  t(\xi+\xi_{{d+1}}Ax)\chi(t)  \chi\left(\frac{t|\xi+\xi_{{d+1}}Ax|}{2^j} \right).\end{align*}
\noindent   {\bf Proof of (\ref{045b}) for $\mathcal{A}^{2,\ell}_{j,0}$}.  We show that
\begin{align} \label{1085}
\left\| \mathcal{A}^{1,\ell}_{j,0} f \right\|_{L^2(\mathbb{R}^{d+1}\times I)} \lesssim 2^{-c|\ell|}   \|f\|_{L^2(\mathbb{R}^{d+1})}\ \text{for all $|\ell|\ge 100 d j$.} 
\end{align} 
\begin{proof}[Proof of (\ref{1085})]
The last variable $\xi_{d+1}$ of the  symbol  $a^2_{j,\ell}$ is  $\lambda$ in $\mathcal{T}^\lambda_j$. By the support condition $|\lambda|\ge 2^{|\ell|/10}\ge 2^{ 10dj}$,  we   apply   (\ref{545}) to obtain that
\begin{align*} 
\left\| \mathcal{A}^{2,\ell}_{j,0}   \right\|_{L^2(\mathbb{R}^{d+1})\rightarrow L^2(\mathbb{R}^{d+1}\times I)} \le \frac{\sup_{|\lambda|\ge 2^{|\ell|/10}}\left\|\mathcal{T}^{\lambda}_j\right\|_{L^2(\mathbb{R}^d)\rightarrow L^2(\mathbb{R}^{d}\times\mathbb{R}) }}{2^{(d-1)j/2}} =    O\left(\frac{2^j}{\lambda}\right)   
\end{align*}
 which is  $O(2^{-c|\ell|})$ showing (\ref{1085}). 
 \end{proof}
\noindent
 {\bf Proof of (\ref{045b}) for $\mathcal{A}^{1,\ell}_{j,0} $}. 
It suffices to fix $t=1$ in (\ref{i7}) and prove 
\begin{align} \label{045bp}
\left\| \mathcal{A}^{1,\ell}_{j,0}\mathcal{P}_{j+\ell} f(\cdot,\cdot,1) \right\|_{L^2(\mathbb{R}^{d+1})} \lesssim 2^{-c|\ell|}   \|f\|_{L^2(\mathbb{R}^{d+1})}\ \text{for  $|\ell|\ge 100 d j$.} 
\end{align}
Our proof  is based on the argument of   M. Christ in \cite{Ch}, where he facilitated the nonisotropic dilations combined with the cancellation property of the singular kernels  in  the nilpotent Lie groups. 
Note   $\mathcal{A}^{1,\ell}_{j,0}\mathcal{P}_{j+\ell} f=f\cdot_{A}K_{j,\ell}\circ_AP_{j+\ell} $  where
\begin{align}
&\begin{cases}\mathcal{A}_{j,0}^{1,\ell}f(x,x_{d+1},1) 
 =f\circ_A K_{j,\ell}(x,x_{d+1})\\
\mathcal{P}_{j+\ell} f (x,x_{d+1})=f\circ_A P_{j+\ell}(x,x_{d+1})
\end{cases}\ \text{where} \qquad\qquad\nonumber\\
K_{j,\ell}(y,y_{d+1})&=\int e^{2\pi i  (\xi,\xi_{d+1})\cdot (y,y_{d+1})}
 \widehat{(d\tilde{\sigma}_{j})_1}(\xi) \psi\left(\frac{|\xi_{d+1}|}{ 2^{|\ell|/10} }\right)d\xi d\xi_{d+1}\label{92s}\\
 P_{j+\ell}(y,y_{d+1})&=2^{(j+\ell+1)d}2^{2(j+\ell+1)}  \psi^{\vee}(2^{j+\ell+1}y, 2^{2(j+\ell+1)}y_{d+1})   \label{10057}\\
 &-
 2^{(j+\ell)d}2^{2(j+\ell)}  \psi^{\vee}(2^{j+\ell}y, 2^{2(j+\ell)}y_{d+1}).
\nonumber
\end{align}
Recall  $f\circ_Ag(x,x_{d+1})=\int f(x-y,x_{d+1}-y_{d+1}-\langle Ax,y\rangle)g(y,y_{d+1})dydy_{d+1}$.
 We shall prove (\ref{045bp}) by Lemma \ref{lem599}-\ref{lem602} below.
 \begin{lemma}\label{lem599}
The  two kernels  in (\ref{92s}) and (\ref{10057}) satisfy the cancellation 
\begin{align}\label{10055}
\int K_{j,\ell}(y,y_{d+1})dydy_{d+1}=\int P_{j+\ell}(y,y_{d+1})dydy_{d+1}=0
\end{align}
and concentration properties:
   \begin{align}\label{10056}
 | K_{j,\ell}(y,y_{d+1})|\lesssim &\frac{2^j}{(1+|y|)^N}
  2^{|\ell|/10}\psi^{\vee}(2^{|\ell|/10}y_{d+1})\ \text{ having   $L^1$ norm $O(2^{j})$} \end{align}
  with the support of $P_{j+\ell}$ in (\ref{10057}) is essentially  supported in
  \begin{align}\label{10058}
  \text{ $\{(y,y_{d+1}): |(2^{j+\ell}y,2^{2(j+\ell)}y_{d+1})|\lesssim1\}$  and $\|P_{j+\ell}\|_{L^1}
=O(1)$.   }
  \end{align}
  \end{lemma}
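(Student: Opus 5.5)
The plan is to read off all three properties directly from the Fourier-side definitions of the two kernels.

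\emph{Cancellation (\ref{10055}).} For both kernels the integral over $\mathbb{R}^{d+1}$ is the Euclidean Fourier transform evaluated at the origin. For $P_{j+\ell}$, by (\ref{10061}) and (\ref{10057}), this value is $\widehat{P_{j+\ell}}(0,0)=\psi(0,0)-\psi(0,0)=0$. For $K_{j,\ell}$, the definition (\ref{92s}) gives the transform as $\widehat{(d\tilde\sigma_j)_1}(\xi)\,\psi(|\xi_{d+1}|/2^{|\ell|/10})$, and at the origin it equals $\widehat{d\tilde\sigma_j}(0)\psi(0)$. Since $j\ge1$, the factor $\chi(\xi/2^j)$ in (\ref{kc1}) vanishes near $\xi=0$, so this value is also $0$. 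Both integrals are absolutely convergent, because the kernels are Schwartz in $y_{d+1}$ and rapidly decaying in $y$ (shown below), so Fourier inversion at the origin is legitimate.

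\emph{Concentration (\ref{10056}).} The transform of $K_{j,\ell}$ is a tensor product in $(\xi,\xi_{d+1})$, so the kernel factors as
\[K_{j,\ell}(y,y_{d+1})=\Big(\int e^{2\pi i\xi\cdot y}\widehat{d\tilde\sigma_j}(\xi)\,d\xi\Big)\cdot 2^{|\ell|/10}\psi^{\vee}(2^{|\ell|/10}y_{d+1}).\]
For the first factor I would reuse the estimate stated before (\ref{kcc2}). By (\ref{i7}), $d\tilde\sigma$ is $d\sigma$ plus the measure $(\xi/|\xi|)\cdot y\,d\sigma(y)$. After the cutoff $\chi(\xi/2^j)$ the factor $\xi/|\xi|$ is a harmless symbol of order $0$, so the inverse transform is
\[\int_{S^{d-1}}2^{dj}\tilde\chi^{\vee}(2^j(y-z))\,w(z)\,d\sigma(z),\]
with a smooth density $w$ (after absorbing the homogeneous factor into $\tilde\chi$). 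This is bounded by $2^j(1+|y|)^{-N}$: within distance $2^{-j}$ of the sphere it has size $2^{dj}\cdot2^{-(d-1)j}$, and it decays rapidly away from the sphere. The $L^1$ norm is then $O(2^j)$ in $y$ and $O(1)$ in $y_{d+1}$, since $\psi^{\vee}$ is Schwartz. More precisely, integrating the sharper bound $2^{dj}(1+2^j\operatorname{dist}(y,S^{d-1}))^{-N}$ gives only $O(1)$, which is even better.

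\emph{Support and $L^1$ norm of $P_{j+\ell}$ (\ref{10058}).} Each term in (\ref{10057}) is an $L^1$-normalized nonisotropic dilate of the Schwartz function $\psi^{\vee}$. Its $L^1$ norm is therefore $\|\psi^{\vee}\|_{L^1}$, and it decays rapidly outside $\{|(2^{j+\ell}y,2^{2(j+\ell)}y_{d+1})|\lesssim1\}$; the $2^{j+\ell+1}$ term is comparable. The triangle inequality gives $\|P_{j+\ell}\|_{L^1}=O(1)$.

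I expect no real obstacle. The only point needing care is the kernel bound for $d\tilde\sigma_j$, where the non-smooth factor $\xi/|\xi|$ must be checked harmless on the annulus $|\xi|\approx 2^j$; there it is smooth with derivatives $O(2^{-j})$, so the standard argument applies.
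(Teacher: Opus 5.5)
Your proposal is correct and follows the paper's own (very brief) argument: vanishing of the Euclidean Fourier transforms at the origin gives (\ref{10055}); the tensor structure of $K_{j,\ell}$ together with the bound for $2^{dj}\chi^{\vee}(2^j\cdot)$ convolved with a smooth density on $d\sigma$ (as before (\ref{kcc2})) gives (\ref{10056}); and the $L^1$-normalized nonisotropic dilates give (\ref{10058}). The only slip is in your optional aside: the refined bound near the sphere is $2^{j}(1+2^j\operatorname{dist}(y,S^{d-1}))^{-N}$, not $2^{dj}(1+2^j\operatorname{dist}(y,S^{d-1}))^{-N}$, which would integrate to $2^{(d-1)j}$; your $O(1)$ claim is still true (e.g.\ by Young's inequality), but it is not needed for the stated $O(2^j)$.
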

    \begin{proof}
The first part  in (\ref{10055}) follows from $\widehat{K_j}(0) =0$ due to the integral $dy$ with the support condition $|\xi|\approx 2^j$, where  $\ \widehat{}\ $ indicates the Euclidean Fourier transform. The second part follows from
$\widehat{P_{j+\ell}}(0)=0$.
Note that $ \widehat{(d\tilde{\sigma}_{j})_1}(\xi) $   defined in (\ref{i7}) as
$$ \left[\int_{S^{d-1}}  e^{2\pi i t \xi \cdot z} \,  (\xi/|\xi|)\cdot z   d\sigma(z)+\widehat{d\sigma}(\xi)\right]\chi\left(\frac{\xi}{2^j}\right).
$$ This with (\ref{2.3q}) implies (\ref{10056}).
   Finally,   (\ref{10058}) follows from (\ref{10057}).
  \end{proof}
  \begin{lemma}\label{lem600}
Let   $\mathcal{A}_{j,0}$ and $\mathcal{P}_{j+\ell}$ be defined in  (\ref{92s}) and (\ref{10057}).  Then it holds
\begin{align}\label{kkgg}
\mathcal{A}_{j,0}\mathcal{P}_{j+\ell}f(x,x_{d+1},1)& =f\circ_A P_{j+\ell}\circ_A K_{j,\ell}(x,x_{d+1})=
f\circ_A(P_{j+\ell}*_{\mathbb{G}_{-A^T}}K_{j,\ell})(x,x_{d+1})\nonumber\\
&=\int f\left(x-y,x_{d+1}-y_{d+1}-\langle Ax,y\rangle \right)U_{j,\ell}(y,y_{d+1})dydy_{d+1}.
\end{align}
where
  $U_{j,\ell}(y,y_{d+1})=P_{j+\ell}*_{\mathbb{G}_{-A^T}}K_{j,\ell}(y,y_{d+1})$.
\end{lemma}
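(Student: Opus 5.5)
The identity is a formal consequence of associativity of group convolution, once $\circ_A$ is rewritten as a conjugated group convolution. I will take $f\in\mathcal{S}(\mathbb{R}^{d+1})$ throughout, and read $\mathcal{A}_{j,0}$ as the operator $\mathcal{A}^{1,\ell}_{j,0}$ with kernel $K_{j,\ell}$ from (\ref{92s}).

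\emph{Step 1: kernel representations.} First I confirm the two kernel representations stated just before the lemma. Insert $\widehat f$ into the symbol form of $\mathcal{A}^{1,\ell}_{j,0}$ at $t=1$, and reverse the Fourier-inversion computation (\ref{49b}). The symbol $\widehat{(d\tilde\sigma_j)_1}(\xi+\xi_{d+1}Ax)\,\psi(|\xi_{d+1}|/2^{|\ell|/10})$ is the Euclidean Fourier transform of $K_{j,\ell}$, evaluated at $(\xi+\xi_{d+1}Ax,\xi_{d+1})$. Since $\langle(\xi+\xi_{d+1}Ax,\xi_{d+1}),(y,y_{d+1})\rangle=\langle \xi,y\rangle+\xi_{d+1}(y_{d+1}+\langle Ax,y\rangle)$, this gives $\mathcal{A}^{1,\ell}_{j,0}f=f\circ_A K_{j,\ell}$. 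The same computation with $\widehat{P_{j+\ell}}(\eta+\eta_{d+1}Ay,\eta_{d+1})$ gives $\mathcal{P}_{j+\ell}f=f\circ_A P_{j+\ell}$. Composing the two yields $\mathcal{A}^{1,\ell}_{j,0}\mathcal{P}_{j+\ell}f=(f\circ_A P_{j+\ell})\circ_A K_{j,\ell}$, which is the first equality in (\ref{kkgg}).

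\emph{Step 2: the conjugation formula.} Next I verify $f\circ_A g=\mathcal{U}_{-A}(\mathcal{U}_Af*_{\mathbb{G}_{-A^T}}g)$ directly. By (\ref{con1}) with $A$ replaced by $-A^T$, we have $h*_{\mathbb{G}_{-A^T}}g(x,x_{d+1})=\int h(x-y,x_{d+1}-y_{d+1}+\langle A^T(x-y),y\rangle)g\,dy\,dy_{d+1}$. Substituting $h=\mathcal{U}_Af$ adds $\langle A(x-y),x-y\rangle$ to the last slot. Applying $\mathcal{U}_{-A}$ then subtracts $\langle Ax,x\rangle$. Expanding, the total shift of the last slot is
$$\langle x-y,Ay\rangle+\langle A(x-y),x-y\rangle-\langle Ax,x\rangle=-\langle Ax,y\rangle,$$
which matches the definition of $\circ_A$. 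This sign bookkeeping is the only real obstacle in the proof. If the conventions come out reversed, I would instead use the group $\mathbb{G}_{A^T}$ or the opposite order of convolution, and adjust the final formula accordingly.

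\emph{Step 3: associativity and unfolding.} Since $\mathcal{U}_A\mathcal{U}_{-A}=\mathrm{Id}$,
$$(f\circ_A P)\circ_A K=\mathcal{U}_{-A}\big((\mathcal{U}_Af*P)*K\big)=\mathcal{U}_{-A}\big(\mathcal{U}_Af*(P*K)\big)=f\circ_A(P*K),$$
where $*$ denotes $*_{\mathbb{G}_{-A^T}}$. The middle equality is associativity of convolution on the group $\mathbb{G}_{-A^T}$, which follows from Fubini and invariance of Lebesgue (Haar) measure under left translation. Fubini applies because $K_{j,\ell}$ and $P_{j+\ell}$ are in $L^1$ by (\ref{10056}) and (\ref{10058}), and $f$ is Schwartz. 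Finally, unfolding the definition of $\circ_A$ with $g=U_{j,\ell}=P_{j+\ell}*_{\mathbb{G}_{-A^T}}K_{j,\ell}$ gives the last line of (\ref{kkgg}).
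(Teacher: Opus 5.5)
Your proposal is correct and follows the paper's proof: the paper likewise writes $f\circ_A g=\mathcal{U}_{-A}(\mathcal{U}_Af*_{\mathbb{G}_{-A^T}}g)$, cancels $\mathcal{U}_A\mathcal{U}_{-A}$, and uses associativity of $*_{\mathbb{G}_{-A^T}}$ to get $(f\circ_A P_{j+\ell})\circ_A K_{j,\ell}=f\circ_A(P_{j+\ell}*_{\mathbb{G}_{-A^T}}K_{j,\ell})$, then unfolds $\circ_A$. You go further than the paper by checking the sign computation for the conjugation identity, which the paper takes from (\ref{isig}) without verification, and by justifying Fubini; one small correction is that the change of variables in the associativity step uses right-invariance of Lebesgue measure, not left-invariance, though this also holds because $\mathbb{G}_{-A^T}$ is unimodular.
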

  \begin{proof}
It suffices to show the second inequality above, which follows from
\begin{align}\label{0cc}
(f\circ_A P)\circ_A K=f\circ_A( P*_{\mathbb{G}_{-A^T}}K).
\end{align}
To show  (\ref{0cc}), from   $f\circ_A g =\mathcal{U}_{-A}(\mathcal{U}_{A}f*_{\mathbb{G}_{-A^T}}g)$ in   (\ref{isig}), we obtain that
\begin{align*}
(f\circ_A P)\circ_A K&=\mathcal{U}_{-A}[\mathcal{U}_{A}(f\circ_A P)*_{\mathbb{G}_{-A^T}}K]\\
&=\mathcal{U}_{-A}[\mathcal{U}_{A}[ \mathcal{U}_{-A}(\mathcal{U}_{A}f*_{\mathbb{G}_{-A^T}} P)]  *_{\mathbb{G}_{-A^T}}K]\\
&=\mathcal{U}_{-A}[ (\mathcal{U}_{A}f*_{\mathbb{G}_{-A^T}} P)  *_{\mathbb{G}_{-A^T}}K]= \mathcal{U}_{-A}[ \mathcal{U}_{A}f*_{\mathbb{G}_{-A^T}} P  *_{\mathbb{G}_{-A^T}}K]\\
& = \mathcal{U}_{-A}[ \mathcal{U}_{A}f*_{\mathbb{G}_{-A^T}} (P  *_{\mathbb{G}_{-A^T}}K)]=f\circ_A (P  *_{\mathbb{G}_{-A^T}}K)
\end{align*}
We have the last line of  (\ref{kkgg})
from   $\circ_A$ and $*_{\mathbb{G}_{A}}$ in (\ref{isig}).
  \end{proof}
\begin{lemma}\label{lem602}
Let $|\ell| \ge 100d  j$.  Suppose that $U_{j,\ell}$ is defined in (\ref{kkgg}). Then   \begin{align}\label{100301}
\int |U_{j,\ell}(y,y_{d+1})|dy dy_{d+1}\lesssim 2^{-c|\ell|}\ \text{for some $c>0$.}
\end{align}
\end{lemma}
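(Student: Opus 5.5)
We prove (\ref{100301}) by the Christ-type argument: we let the cancellation of one kernel act against the smoothness of the other, at the scale fixed by the sign of $\ell$. Write $U_{j,\ell}=P_{j+\ell}*_{\mathbb{G}_{-A^T}}K_{j,\ell}$, so that
$$U_{j,\ell}(y,y_{d+1})=\int P_{j+\ell}(z,z_{d+1})\,K_{j,\ell}\big((z,z_{d+1})^{-1}\cdot(y,y_{d+1})\big)\,dz\,dz_{d+1},$$
with the group law of $\mathbb{G}_{-A^T}$. By Lemma \ref{lem599}, $P_{j+\ell}$ is concentrated on the nonisotropic box of size $2^{-(j+\ell)}\times 2^{-2(j+\ell)}$, and $K_{j,\ell}$ lives at scale $1$ in $y$ and scale $2^{-|\ell|/10}$ in $y_{d+1}$, with $L^1$ norm $O(2^j)$. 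Both kernels have mean zero.

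\textbf{Case $\ell\ge 100dj$.} Here $P_{j+\ell}$ lives at a much finer scale than $K_{j,\ell}$. Using $\int P_{j+\ell}=0$, we subtract $K_{j,\ell}(y,y_{d+1})$ inside the integral and obtain
$$U_{j,\ell}(y,y_{d+1})=\int P_{j+\ell}(z,z_{d+1})\big[K_{j,\ell}\big((z,z_{d+1})^{-1}\cdot(y,y_{d+1})\big)-K_{j,\ell}(y,y_{d+1})\big]\,dz\,dz_{d+1}.$$
Translation by $(z,z_{d+1})^{-1}$ shifts the first variables by $z=O(2^{-(j+\ell)})$ and the last variable by $z_{d+1}+O(|z||y|)$. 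The derivatives satisfy $\nabla_yK_{j,\ell}=O(2^j)$ times the majorant in (\ref{10056}), since the frequency is $|\xi|\approx2^j$, and $\partial_{y_{d+1}}K_{j,\ell}=O(2^{|\ell|/10})$ times the majorant. The resulting $L^1$ bound is
$$\|U_{j,\ell}\|_{L^1}\lesssim \|P_{j+\ell}\|_{L^1}\,2^j\Big(2^j2^{-(j+\ell)}+2^{|\ell|/10}\big(2^{-2(j+\ell)}+2^{-(j+\ell)}\big)\Big)\lesssim 2^{2j}2^{-9\ell/10}.$$
Two points need care. The factor $|y|$ in the shift is harmless because of the decay $(1+|y|)^{-N}$. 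The tails of $P_{j+\ell}$ (it is Schwartz, not compactly supported) are handled by splitting the $z$-integral dyadically. Since $\ell\ge100dj$, the factor $2^{2j}$ is absorbed and the bound is $O(2^{-c\ell})$.

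\textbf{Case $\ell\le -100dj$.} Now $P_{j+\ell}$ is at the coarse scale $2^{|\ell|-j}$ in $y$ and $2^{2(|\ell|-j)}$ in $y_{d+1}$, while $K_{j,\ell}$ is at scale $\lesssim1$. We write the convolution the other way, $U_{j,\ell}(w)=\int P_{j+\ell}(w\cdot v^{-1})K_{j,\ell}(v)\,dv$. Using $\int K_{j,\ell}=0$, we subtract $P_{j+\ell}(w)$ and differentiate $P_{j+\ell}$ with the group-adapted Taylor expansion. The shift in the last variable is $v_{d+1}+O(|w||v|)$, and $|w|$ is now of size $2^{|\ell|}$ on the support of $P_{j+\ell}$. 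Combined with the derivative bound of order $2^{-2(|\ell|-j)}$ for $\partial_{y_{d+1}}P_{j+\ell}$, this costs only $2^{-(|\ell|-j)}$ per unit of $|v|$. The $y$-derivatives give $2^{-(|\ell|-j)}$ as well, and $v_{d+1}$ contributes at most $2^{-2(|\ell|-j)}$. The moments $\int(|v|+|v_{d+1}|)|K_{j,\ell}|\lesssim 2^j$ are finite by (\ref{10056}). Altogether $\|U_{j,\ell}\|_{L^1}\lesssim 2^{2j}2^{-|\ell|}$, which is again $O(2^{-c|\ell|})$.

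\textbf{Main obstacle.} The difficulty is bookkeeping the non-commutative shift $\langle A^Tz,y\rangle$ in the last variable, because it couples the large spatial variable to the small one. In the first case the danger is that $|y|$ is unbounded, and the fix is to spend a factor from the $(1+|y|)^{-N}$ decay of $K_{j,\ell}$. In the second case the danger is that $|w|\sim 2^{|\ell|-j}$, and I would verify that this is exactly compensated by the $y_{d+1}$-scale of $P_{j+\ell}$; this is the point where the nonisotropic dilation structure is essential.
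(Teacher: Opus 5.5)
Your proposal is correct and follows essentially the paper's proof. You split by the sign of $\ell$; for $\ell>0$ you use $\int P_{j+\ell}=0$ against the smoothness of $K_{j,\ell}$ (scales $2^{-j}$ in $y$, $2^{-|\ell|/10}$ in $y_{d+1}$), for $\ell<0$ you use $\int K_{j,\ell}=0$ against the smoothness of $P_{j+\ell}$ (scales $2^{|\ell|-j}\times 2^{2(|\ell|-j)}$), and the mean value theorem then gives $2^{Cj}2^{-c|\ell|}$ in each case exactly as in the paper. Your handling of the $|y|$-dependence of the shift $\langle A(y-z),z\rangle$, absorbed by the decay of $K_{j,\ell}$ or of $P_{j+\ell}$, is more careful than the paper's bare assertion $|\langle A(y-z),z\rangle|\lesssim 2^{-j-\ell}$; one small slip is that for $\ell<0$ one has $|w|\sim 2^{|\ell|-j}$ rather than $2^{|\ell|}$, but $2^{|\ell|-j}$ is the value your bound actually uses.
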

\begin{proof}
Note from (\ref{isig}), we  write $U_{j,\ell}(y,y_{d+1})=P_{j+\ell}*_{\mathbb{G}_{-A^T}}K_{j,\ell}(y,y_{d+1})$ as
\begin{align}\label{sspp}
& \int P_{j+\ell}\left(y-z,y_{d+1}-z_{d+1}+\langle  A^T (y-z),z\rangle \right)K_{j,\ell}(z,z_{d+1})dzdz_{d+1}\nonumber\\
 &= \int K_{j,\ell}\left(y-z,y_{d+1}-z_{d+1}+\langle  A (y-z),z\rangle \right)P_{j+\ell}(z,z_{d+1})dzdz_{d+1} 
\end{align}{\bf Case 1}. Let $\ell>0$. As $\int P_{j+\ell}(z,z_{d+1})dzdz_{d+1}=0$ in (\ref{sspp}),  write 
$U_{j,\ell}(y,y_{d+1})$:
\begin{align*}
  \int \bigg[K_{j,\ell}\left(y-z,y_{d+1}-z_{d+1}+\langle  A (y-z),z\rangle \right)-K_{j,\ell}\left(y,y_{d+1}  \right)\bigg] P_{j+\ell}(z,z_{d+1})dzdz_{d+1}.   
\end{align*}
Note that   $K_{j,\ell}$ is supported essentially on  $S^{d-1}\times [-2^{-|\ell|/10},2^{-|\ell|/10}]$ in (\ref{10056}). From this we observe that
\begin{itemize}
\item  $|\nabla_y K_{j,\ell}(y,y_{d+1})|\lesssim 2^j\times |K_{j,\ell}(y,y_{d+1})|$  because of (\ref{92s})  
\item $|\nabla_{y_{d+1}} K_{j,\ell}(y,y_{d+1})|\lesssim  2^{|\ell|/10}\times |K_{j,\ell}(y,y_{d+1})|$  because of (\ref{92s})     
\item $|z|, |\langle  A(y-z),z\rangle|\lesssim 2^{-j-\ell} $ and $ |z_{d+1}|   \le 2^{-2j-2|\ell|}$ due to (\ref{10057}) and (\ref{10056}). 
\end{itemize}
By utilizing this condition with $y$ dominating over $z$,   we apply the mean value theorem   for   the function $K_{j,\ell}$  to obtain that
\begin{align}
 \int |U_{j,\ell}(y,y_{d+1})|&dy dy_{d+1}\lesssim \int  |2^{-j-\ell}2^j+2^{-j-\ell} 2^{|\ell|/10}|\times |\tilde{K}_{j,\ell}(y,y_{d+1})| dydy_{d+1}\nonumber\\
&\times\int |P_{j+\ell}(z,z_{d+1})|dzdz_{d+1}=O(2^{j}2^{-8|\ell|/10})=O(2^{-|\ell|/2})\label{100302}
\end{align}
where $\tilde{K}_{j,\ell}$ is a slight change of $K_{j,\ell}$ in  (\ref{92s}).\\
{\bf Case 2}. Let $\ell<0$. By subtracting  the vanishing term below from $U_{j,\ell}(y,y_{d+1})$,  
$$ \int K_{j,\ell}\left(z,z_{d+1} \right)  P_{j+\ell}\left(y,y_{d+1}  \right) dzdz_{d+1} =0,$$
we rewrite $U_{j,\ell}(y,y_{d+1})$ in the first line of (\ref{sspp}) as
 $$ \int K_{j,\ell}\left(z,z_{d+1} \right) \bigg[ P_{j+\ell}\left(y-z,y_{d+1}-z_{d+1}+  \langle A^T(y-z),z\rangle\right)- P_{j+\ell}\left(y,y_{d+1}  \right) \bigg]  dzdz_{d+1}.$$
Note that for $\ell<0$ in (\ref{10057}) and (\ref{10056}),
\begin{itemize}
\item  $|\nabla_y P_{j+\ell}(y,y_{d+1})|\lesssim 2^{j-|\ell|}\times |P_{j+\ell}(y,y_{d+1})|$,
\item
  $|\nabla_{y_{d+1}} P_{j+\ell}(y,y_{d+1})|\lesssim 2^{2j-2|\ell|}\times |P_{j+\ell}(y,y_{d+1})|$, 
  \item $|z|\lesssim 1$,$|z_{d+1}|\lesssim 2^{- |\ell|/10}$,$|y-z|\lesssim 2^{-j+\ell}$ and $|\langle  A^T(y-z),z\rangle|\lesssim |y-z|\,|z|\lesssim 2^{-j+|\ell|}$.
\end{itemize}
We apply  the mean value theorem above to obtain
\begin{align*}
&\bigg[P_{j+\ell}\left(y-z,y_{d+1}-z_{d+1}+\langle  A(z),y-z
\rangle \right)-P_{j+\ell}\left(y,y_{d+1} \right) \bigg]\\
&\qquad\qquad\lesssim \max\{ 2^{j-|\ell|},  2^{2j-2|\ell|} (2^{-j+|\ell|}+2^{-|\ell|/10})     \} \times P_{j+\ell}\left(y,y_{d+1} \right).    \end{align*}
Thus
\begin{align*}
  \int |U_{j,\ell}(y,y_{d+1})|dy dy_{d+1}& \lesssim  2^{j-|\ell|}\int  |P_{j+\ell}\left(y,y_{d+1} \right)|dydy_{d+1} \int |K_{j,\ell}(z,z_{d+1})| dzdz_{d+1}\\
  &\lesssim  2^{j}2^j 2^{-|\ell|}=O(2^{-|\ell|/2}).
  \end{align*}
By this and (\ref{100302}) with $|\ell|>100d j$, we obtain (\ref{100301}).
\end{proof}

\end{document}